\documentclass[leqno,12pt]{article}

\usepackage[all]{xy}
\usepackage{amsthm}
\usepackage{amssymb} 
\usepackage{amsmath,amscd} 
\usepackage{mathrsfs}
\usepackage{color}
\usepackage{enumerate}

\setcounter{tocdepth}{1}


%
%


\def\asid{\textsf{ASID}}

\def\GrMod{\operatorname{\mathsf{GrMod}}}

\def\turn!{\textup{!`}}

\def\op{\textup{op}}

\def\pd{\mathop{\mathrm{pd}}\limits}
\def\grpd{\mathop{\mathrm{gr.pd}}\limits}
\def\injdim{\mathop{\mathrm{id}}\limits}
\def\grinjdim{\mathop{\mathrm{gr.id}}\limits}

\def\tot{\operatorname{\mathsf{tot}}}

\def\grmod{\operatorname{mod}^{\Bbb{Z}}}

\def\mrb{\mathrm{b}}

\def\grSing{\operatorname{Sing}^{\ZZ}}

\def\stabgrCM{\operatorname{\underline{\mathsf{CM}}}^{\Bbb{Z}}}

\def\Tor{\operatorname{Tor}}


\def\kk{{\mathbf k}}


\def\NN{{\Bbb N}}

\def\ZZ{{\Bbb Z}}



\def\cA{{\cal A}}

\def\cX{{\cal X}}
\def\cY{{\cal Y}}


\def\sfC{{\mathsf{C}}}
\def\sfD{{\mathsf{D}}}

\def\sfK{{\mathsf{K}}}

\def\sfU{{\mathsf{U}}}

\def\sfj{{\mathsf{j}}}
\def\sfq{{\mathsf{q}}}



\def\tuD{{\textup{D}}}

\def\tuH{{\textup{H}}}


\def\frki{{\mathfrak{i}}}

\def\frkj{{\mathfrak{j}}}

\def\frkp{{\mathfrak{p}}}
\def\frkq{{\mathfrak{q}}}

\def\frks{{\mathfrak{s}}}

\def\frkt{{\mathfrak{t}}}




\def\id{\operatorname{id}}
\def\op{\operatorname{op}}

\def\mod{\operatorname{mod}}
\def\Mod{\operatorname{Mod}}
\def\GrMod{\operatorname{Mod}^{\mathbb{Z}}}

\def\Ker{\mathop{\mathrm{Ker}}\nolimits}

\def\proj{\operatorname{proj}}

\def\grproj{\operatorname{proj}^{\mathbb{Z}}} 
 
\def\GrProj{\operatorname{Proj}^{\mathbb{Z}}} 
\def\GrInj{\operatorname{Inj}^{\mathbb{Z}}}

\def\Proj{\operatorname{Proj}} 
\def\Inj{\operatorname{Inj}}

\def\Cok{\operatorname{Cok}}
\def\Coker{\operatorname{Cok}}

\def\Hom{\operatorname{Hom}}
\def\grHom{\operatorname{HOM}}

\def\Ext{\operatorname{Ext}}
\def\grExt{\operatorname{EXT}}

\def\gldim{\operatorname{gldim}}

\def\grgldim{\operatorname{grgldim}}



\newcommand{\RHom}{\operatorname{\Bbb{R}Hom}}
\def\grRHom{\operatorname{\Bbb{R}HOM}}
\newcommand{\lotimes}{\otimes^{\Bbb{L}}}

\newcommand{\cone}{\operatorname{\mathsf{cn}}}


\def\RHom{\operatorname{\mathbb{R}Hom}}
\def\grRHom{\operatorname{\mathbb{R}HOM}}








\newtheorem{lemma}{Lemma}[section]
\newtheorem{proposition}[lemma]{Proposition}
\newtheorem{theorem}[lemma]{Theorem}
\newtheorem{corollary}[lemma]{Corollary}

\newtheorem{remark}[lemma]{Remark}

\theoremstyle{definition}

\newtheorem{definition}[lemma]{Definition}

\theoremstyle{remark}

\voffset=-30mm\oddsidemargin=-10mm\evensidemargin=-10mm\textheight=240mm\textwidth=180mm

\title{
Homological dimension formulas for trivial extension algebras
\\
{\small
Dedicated to I. Reiten's 75 birthday
}
}

\author{Hiroyuki Minamoto and Kota Yamaura}

\begin{document}

\maketitle

\begin{abstract}
Let $A= \Lambda \oplus C$ be a trivial extension algebra. 
The aim of this paper is to establish formulas for the projective dimension and the injective dimension for a certain class of $A$-modules 
which is expressed by using the derived functors $- \lotimes_{\Lambda}C$ and $\RHom_{\Lambda}(C, -)$. 
Consequently, 
we obtain  formulas for the global dimension of $A$, 
which gives a modern expression of  the classical formula for the global dimension  by Palmer-Roos and L\"ofwall 
that is written in complicated classical derived functors.

The main application of the formulas is to give 
a necessary and sufficient condition for $A$ to be  an  Iwanaga-Gorenstein algebra. 

We also give a description of the kernel $\Ker \varpi$ 
of the canonical functor $\varpi: \sfD^{\mrb}(\mod \Lambda) \to \grSing A$  
 in the  case $\pd C < \infty$. 
\end{abstract}

\tableofcontents

\section{Introduction}\label{Introduction}

Throughout the paper $\kk$ denotes a commutative ring. 
An algebra $\Lambda$ is always $\kk$-algebra 
and a $\Lambda$-$\Lambda$-bimodule $C$ is always assumed to be $\kk$-central. 
Recall that the \textit{trivial extension algebra} $A = \Lambda \oplus C$ is a direct sum $\Lambda \oplus C$ 
equipped with the multiplication 
\[
(r, c) (s,d) := (rs , rd+ cs) \ \ \ \ ( r,s \in \Lambda, \ \ c, d \in C).
\] 
Since  a trivial extension algebra is   one of fundamental construction, 
it has been extensively studied from every aspect 
and homological dimension is no exception.  

For instance,  in \cite{Chase}, 
Chase raised a problem of determining the global dimension 
of an upper triangular matrix algebras  $A  =\begin{pmatrix} \Lambda_{0} & C \\ 0 & \Lambda_{1} \end{pmatrix}$ 
which is an example of a trivial extension algebra,  in terms of $\Lambda_{0}, \Lambda_{1}$ and $C$.

The global dimension of general trivial extension algebra $A = \Lambda \oplus C$ 
had been studied by Fossum-Griffith-Reiten \cite{FGR}, Reiten \cite{Reiten:Thesis}, Palmer-Roos \cite{Palmer-Roos}. 
Finally, L\"ofwall \cite{Lofwall} gave a general formula for the global dimension of $A$ in terms of $\Lambda$ and $C$ 
by using ``multiple $\Tor$" introduced by  Palmer-Roos \cite{Palmer-Roos}. 
Thus, in particular, Chase's problem was solved. 
(For the historical background we refer the readers to  \cite[Section 4]{FGR}, \cite[Introduction]{Palmer-Roos}.)

However, 
the methods ``multiple $\Tor$" for the Palmer-Roos-L\"ofwall formula
was so complicated that 
the formula has never got attention which it ought to deserve.  
For instance, in the studies of homological dimensions of upper triangular algebras 
(e.g.,\cite{Asadollahi-Salarian,Chen scs,ECIT,Sakano}) 
there have been no attempt to generalize the formula 
in such a way as  to be applicable for each problem.

In this paper we establish  formulas for homological dimensions of a class of $A$-modules  
by using homological dimension of objects of the derived category $\sfD(\Mod \Lambda)$ introduced by Avramov-Foxby 
\cite{Avramov-Foxby}. 
As a corollary, 
we obtain  formulas for the global dimension of $A$, 
which gives a modern expression of  the Palmer-Roos-L\"ofwall formula.

We note that the formulas involve the  iterated derived  tensor product $C^{a}$ of $C$, 
where  for  $a \in \NN$, we set\footnote{
However, we remark that there is a subtlety about derived tensor product of bimodules. 
For this see Remark \ref{bimodule remark projective} and Remark \ref{bimodule remark injective}.}
\[
C^{a} := 
\begin{cases} 
C\lotimes_{\Lambda} C \lotimes_{\Lambda}  \cdots \lotimes_{\Lambda} C  \ \ (a\textup{-factors}) & a >0,  \\
\Lambda & a = 0.
\end{cases}
\]

In the rest of Introduction, we explain the results of this paper by only focusing on injective dimensions. 
A key technique   
is the use of the grading with which a trivial extension algebra $A = \Lambda \oplus C$ is canonically equipped. 
Namely, $\deg\Lambda = 0, \ \deg C= 1$.  
Let $M$ be a graded $A$-module concentrated in degree $i= 0, 1$, i.e., $M_{i} = 0 $ for $i \neq 0,1$. 
Then, we will observe in Proposition \ref{finitely graded injective dimension lemma} that 
its (ungraded) injective dimension and graded injective dimension coincide.
\[
\injdim _{A} M = \grinjdim_{A} M.  
\]
By this fact, we can pass to the study  of the graded injective dimension of $M$. 
It is an analysis of  a graded injective resolution of $M$ that naturally leads to the iterated derived tensor product $C^{a}$. 
To state our injective dimension formula, we  use  the derived coaction morphism $\Theta_{M}^{0}$ of $M$, 
that is, the  morphism induced from the graded $A$-module structure on $M=M_{0} \oplus M_{1}$.  
\[
\Theta_{M}^{a}: M_{0} \to \RHom_{\Lambda}(C, M_{1}). 
\]
For simplicity we set   $\Theta_{M}^{a}:= \RHom_{\Lambda}(C^{a}, \Theta_{M}^{0})$. 
\[
\Theta_{M}^{a} : \RHom_{\Lambda}(C^{a}, M_{0}) \to \RHom_{\Lambda}(C^{a+1}, M_{1}).
\]
We can now formulate our injective dimension formula. 

\begin{theorem}[{Theorem \ref{injective dimension formula}}]\label{introduction theorem 1}
Let $M$ be a graded $A$-module such that $M_{i} = 0$ for $i \neq 0,1$. 
Then, 
\[
\injdim_{A} M = \grinjdim_{A} M = \sup\{\injdim_{\Lambda}  M_{1}, \ \injdim_{\Lambda} ( \cone \Theta_{M}^{a}) + a+ 1 \mid a \geq 0\}
\]
where $\cone \Theta_{M}^{a}$ denote the cone of the morphism $\Theta_{M}^{a}$ in the derived category $\sfD(\Mod \Lambda)$. 
\end{theorem}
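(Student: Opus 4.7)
The first move will be to reduce the problem to computing the graded injective dimension. The equality $\injdim_A M = \grinjdim_A M$ is the quoted Proposition \ref{finitely graded injective dimension lemma}, applicable because $M$ is supported in the bounded range of degrees $\{0,1\}$. Hence only the second equality needs proof.

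To compute $\grinjdim_A M$ I would test against a convenient generating set of $\grmod A$. Since $A = \Lambda \oplus C$ splits via the projection $\pi\colon A \twoheadrightarrow \Lambda$, every graded $A$-module is built, through iterated extensions, from graded $\Lambda$-modules restricted along $\pi$; so it suffices to examine $\grExt_A^n(\pi^* N\langle k\rangle, M)$ as $N$ ranges over graded $\Lambda$-modules concentrated in a single degree and $k \in \ZZ$. I would resolve $\pi^* N\langle k\rangle$ in $\sfD(\grmod A)$ by iterating the triangle
\[
(C \lotimes_\Lambda N)\langle k+1\rangle \to A \lotimes_\Lambda N\langle k\rangle \to \pi^* N\langle k\rangle \to [1],
\]
noting that the leftmost term is again of the form $\pi^*(-)\langle \cdot\rangle$, so the iteration produces a Postnikov-style tower whose graded pieces are $A \lotimes_\Lambda C^a \lotimes_\Lambda N\langle k+a\rangle$ for $a \geq 0$. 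Applying $\grRHom_A(-, M)$ together with the tensor-Hom adjunction $\grRHom_A(A \lotimes_\Lambda Y, M) \cong \grRHom_\Lambda(Y, M)$ then yields a cochain complex whose entries are $\grRHom_\Lambda(C^a \lotimes_\Lambda N\langle k+a\rangle, M)$.

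Because $M$ is supported in graded degrees $0$ and $1$ only, each such entry has exactly two nonzero internal-degree components, namely $\RHom_\Lambda(C^a \lotimes_\Lambda N, M_0)$ and $\RHom_\Lambda(C^a \lotimes_\Lambda N, M_1)$ placed in the appropriate internal degrees. The differential of the totalised bicomplex, coming from left multiplication by $C$, matches by construction $\RHom_\Lambda(C^a \lotimes_\Lambda N, \Theta_M^0) = \RHom_\Lambda(N, \Theta_M^a)$. Reorganising, the complex becomes a collection of shifted cones $\RHom_\Lambda(N, \cone \Theta_M^a)$ for $a \geq 0$, shifted by $a+1$ (the $a$ from the homological position of the $C^a$-strand in the Postnikov tower, and the extra $1$ from the step between the $M_0$- and $M_1$-components), together with an unmatched top piece $\RHom_\Lambda(N, M_1)$ that accounts for the $\injdim_\Lambda M_1$ summand. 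Taking the supremum of cohomological degrees in which $\grExt_A^n(\pi^* N\langle k\rangle, M)$ remains nonzero, as $N$ and $k$ vary, recovers the right-hand side of the theorem. The main obstacle I anticipate is this last bookkeeping step: verifying that the totalised differential genuinely realises the derived coaction $\Theta_M^a$ rather than some related but distinct map, and correctly isolating the $\injdim_\Lambda M_1$ summand as the unmatched ``edge'' of the bicomplex instead of absorbing it into the cones indexed by $a \geq 0$.
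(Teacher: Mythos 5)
Your proposal is correct in outline, but it takes a genuinely different route from the paper's proof. The paper works directly with a graded injective resolution $I$ of $M$, decomposes it according to the degree of cogenerators via the functors $\frki_i$ (Lemma \ref{decomposition of graded injective modules} and Section \ref{Decomposition of graded injective $A$-modules}), and identifies the pieces $\frki_{-a}I \cong \cone\Theta_M^a[-a-1]$ in Lemma \ref{A key observation for the criterion}; the two inequalities of the theorem are then read off from the length of $I$, using the structural Lemma \ref{injective lemma}.(2) to manufacture an injective resolution of the right length for the converse. You instead fix the test objects $\pi^*N(k)$, resolve them by graded projectives using the iterated triangle $N\lotimes_\Lambda C(-1)\to N\lotimes_\Lambda A\to \pi^*N$, apply $\grRHom_A(-,M)$, and deduce $\grinjdim_A M$ from the vanishing of $\grExt$-groups. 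Both proofs hinge on the same exact triangle $0\to C(-1)\to A\to\Lambda\to 0$, but the paper packages it via the decomposition of injective complexes (which yields the explicit structure of $I$, used again in Corollary \ref{asid number corollary}), whereas your argument uses only the $\Ext$-vanishing characterization of injective dimension together with the fact that every graded $A$-module has a length-two filtration $0\subset XA_{\geq 1}\subset X$ with subquotients restricted from $\Lambda$ --- a genuine simplification, but one that loses the explicit description of the injective resolution. Two points you should make precise if you write this out: first, the apparent infinite ``Postnikov tower'' causes no convergence issue because, after applying $\Hom_{\GrMod A}(-,M(i))$, only the pieces with $a \in \{-i, 1-i\}$ survive, so each internal degree sees a genuine two-column bicomplex whose totalization is the cone you want; second, the identification of the induced differential with $\RHom_\Lambda(N,\Theta_M^a)$ comes from unwinding the $A$-multiplication in the resolution of $\pi^*N$ against the graded $A$-action on $M$ via the $\otimes$-$\Hom$ adjunction --- exactly the computation the paper performs on the injective side in Lemmas \ref{degree lemma injective} and \ref{A key observation for the criterion}, and you should also be careful to write $N\lotimes_\Lambda C^a$ rather than $C^a\lotimes_\Lambda N$, since $N$ is a right $\Lambda$-module.
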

We note that this formula and  a projective version given in Theorem \ref{projective dimension formula} 
is established for not only a graded $A$-module  but also an object of the derived category $\sfD(\GrMod A)$ 
satisfying the same condition. 

In the case where $M$ is concentrated in degree $0$, the formula is simplified as in Corollary \ref{injective dimension formula corollary}.  
We remark that such a graded $A$-module is nothing but a $\Lambda$-module 
regarded as an $A$-module via the augmentation map $\mathsf{aug}: A \to \Lambda, \mathsf{aug}(r,c) := r$.  
The global dimension of $A$ can be  measured by such modules. 

\begin{corollary}[{Corollary \ref{injective dimension formula corollary 2}}]\label{Introduction theorem 2}
\[
\gldim A = \sup \{ \injdim_{\Lambda} \RHom_{\Lambda}(C^{a}, M)  + a \mid M \in \Mod \Lambda, \ a \geq 0 \}.
\]
\end{corollary}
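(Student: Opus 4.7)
The plan is to deduce the corollary from Theorem \ref{introduction theorem 1} in two steps: first, specialize the formula to $\Lambda$-modules $M$ viewed as $A$-modules concentrated in degree $0$ via the augmentation $\mathsf{aug}: A \to \Lambda$; second, observe that $\gldim A$ is already detected by such modules.

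For the specialization, take a graded $A$-module with $M_0 = M$ and $M_1 = 0$. The derived coaction $\Theta_M^0: M_0 \to \RHom_\Lambda(C, M_1)$ becomes the zero map from $M$ into the zero object, and likewise for all $a \geq 0$. Therefore
\[
\cone \Theta_M^a \;=\; \cone\!\bigl(\RHom_\Lambda(C^a, M) \to 0\bigr) \;\cong\; \RHom_\Lambda(C^a, M)[1],
\]
so $\injdim_\Lambda(\cone \Theta_M^a) = \injdim_\Lambda \RHom_\Lambda(C^a, M) - 1$. Since $\injdim_\Lambda M_1 = \injdim_\Lambda 0 = -\infty$, Theorem \ref{introduction theorem 1} collapses (after the cancellation $(-1) + (a+1) = a$) to
\[
\injdim_A M \;=\; \sup\!\bigl\{\,\injdim_\Lambda \RHom_\Lambda(C^a, M) + a \;\big|\; a \geq 0 \,\bigr\}.
\]

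It remains to verify the identity $\gldim A = \sup\{\injdim_A M \mid M \in \Mod\Lambda\}$. The inequality $\geq$ is immediate. For $\leq$, exploit the trivial-extension identity $C^2 = 0$ in $A$: every $A$-module $N$ fits into a short exact sequence
\[
0 \to CN \to N \to N/CN \to 0
\]
in which both outer terms are annihilated by $C$ and thus inherit a $\Lambda$-module structure via augmentation. The standard injective-dimension inequality then gives $\injdim_A N \leq \max\{\injdim_A CN,\; \injdim_A N/CN\}$, so the supremum of $\injdim_A N$ over $N \in \Mod A$ coincides with the supremum over $\Lambda$-modules. Supping the displayed formula for $\injdim_A M$ over $M \in \Mod\Lambda$ then yields the stated expression for $\gldim A$.

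I expect no serious obstacle here: step one is a routine degeneration of Theorem \ref{introduction theorem 1} once one is comfortable with the convention $\injdim 0 = -\infty$ and the shift of injective dimension under $[1]$, while step two reduces to the two-term filtration $CN \subset N$ made available by $C^2=0$. The one point to track carefully is that Theorem \ref{introduction theorem 1} (combined with Proposition \ref{finitely graded injective dimension lemma}) already equates the ungraded $\injdim_A M$ with its graded counterpart for $M$ concentrated in degree $0$, so the ungraded quantity $\gldim A$ on the left is exactly what the formula computes.
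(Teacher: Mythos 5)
Your proposal is correct and follows essentially the paper's own route: step one recovers Corollary \ref{injective dimension formula corollary} as a degeneration of Theorem \ref{injective dimension formula}, and step two reproves the injective-dimension part of Proposition \ref{global dimension lemma} by the same two-step filtration $NA_{\geq 1} = NC \subset N$ (note: since modules are right modules here, write $NC$ rather than $CN$). The paper simply cites those two results directly, so the only difference is that you unpack their proofs inline.
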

We remark that the projective dimension version of the above formula which is   given in Corollary \ref{projective dimension formula corollary 2} 
is essentially the same with the result of  Palmer-Roos  and L\"ofwall given in the aforementioned papers. 
We also remark that it seems that the above global dimension formula can be proved by their methods.

However,   our main  application of Theorem \ref{introduction theorem 1}, which is a criterion of finiteness of self-injective dimension of $A$, 
seems  to be hard to be  achieved by their methods. 
The graded component of the graded $A$-module $A_{A}$ are $A_{0} = \Lambda, \ A_{1} = C$ and $ A_{i} = 0$ for $i \neq 0,1$ 
and the derived coaction morphism $\Theta_{A}^{0}$  is 
the morphism 
\[
\lambda_{r}: \Lambda \to \RHom_{\Lambda}(C,C)
\]
induced from the left 
multiplication map $\tilde{\lambda}_{r}(r): C \to C, (\tilde{\lambda}_{r}(r))(c) := sc \ (r \in \Lambda)$ 
(where the suffix $r$ of $\lambda_{r}$ indicate that this relates to the right self-injective dimension). 
As a consequence of Theorem \ref{introduction theorem 1} we obtain the following criterion.

\begin{theorem}[{Theorem \ref{right asid theorem}}]\label{Introduction theorem 3}
The following conditions are equivalent: 
\begin{enumerate}[(1)]
\item $\injdim A_{A} < \infty$.  

\item the following conditions are satisfied: 

\begin{enumerate}[ {Right} $\asid$ 1.]
\item $\injdim_{\Lambda} C < \infty$. 

\item $\injdim_{\Lambda} \cone (\RHom_{\Lambda}(C^{a}, \lambda_{r} )) < \infty $ 
for $a \geq 0$. 

\item  The morphism $\RHom_{\Lambda}(C^{a}, \lambda_{r} )$ is an isomorphism for $a \gg 0$. 
\end{enumerate}
Here ``\asid" is an abbreviation of ``attaching self-injective dimension".
 
\end{enumerate}
\end{theorem}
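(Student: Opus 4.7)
The strategy is to apply Theorem \ref{introduction theorem 1} to $M := A$ regarded as a graded right $A$-module. Because the canonical grading puts $A_0 = \Lambda$ and $A_1 = C$ with $A_i = 0$ otherwise, the hypothesis of Theorem \ref{introduction theorem 1} is satisfied. The key preparatory step is to identify the derived coaction morphism: by definition, $\Theta_A^0 \colon \Lambda \to \RHom_\Lambda(C, C)$ is the derived adjoint of the multiplication $\Lambda \otimes_\Lambda C \to C$, which is exactly the map $\lambda_r$ of the theorem statement. Applying $\RHom_\Lambda(C^a, -)$ then gives $\Theta_A^a = \RHom_\Lambda(C^a, \lambda_r)$.

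Substituting into the formula of Theorem \ref{introduction theorem 1} yields
\[
\injdim A_A = \sup \bigl\{ \injdim_\Lambda C,\ \injdim_\Lambda \cone(\RHom_\Lambda(C^a, \lambda_r)) + a + 1 \ \big|\ a \geq 0 \bigr\},
\]
so the theorem reduces to characterizing when this supremum is finite. For the implication (2) $\Rightarrow$ (1), Right $\asid$ 3 provides some $a_0$ with $\cone(\RHom_\Lambda(C^a, \lambda_r)) = 0$ for all $a \geq a_0$, leaving only finitely many indices to contribute to the supremum; Right $\asid$ 2 bounds each of those contributions and Right $\asid$ 1 bounds the remaining term.

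For the converse (1) $\Rightarrow$ (2), set $d := \injdim A_A < \infty$. Right $\asid$ 1 is immediate from the formula, and for every $a \geq 0$ one reads off
\[
\injdim_\Lambda \cone(\RHom_\Lambda(C^a, \lambda_r)) \leq d - a - 1.
\]
The finitely many indices $a$ with $d - a - 1 \geq 0$ then satisfy Right $\asid$ 2, while for $a$ so large that $d - a - 1 < 0$ the cone must vanish, yielding Right $\asid$ 3.

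The main subtlety lies in this last step: one needs that $\injdim_\Lambda X < 0$ forces $X \simeq 0$ in $\sfD(\Mod \Lambda)$. This requires a prior cohomological boundedness of $\cone(\RHom_\Lambda(C^a, \lambda_r))$, which should be extracted from the graded injective resolution machinery underlying Theorem \ref{introduction theorem 1}; once this is in place, the standard inequality, that $\injdim_\Lambda X$ of a nonzero cohomologically bounded complex is bounded below by the negative of its top cohomological degree, closes the argument.
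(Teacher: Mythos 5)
Your approach is essentially the paper's: identify $\Theta_A^0 = \lambda_r$, note $A_0 = \Lambda$, $A_1 = C$ and $A_i = 0$ otherwise, invoke Corollary \ref{finitely graded IG-lemma} to pass to graded injective dimension, and apply Theorem \ref{injective dimension formula}. The main thing to notice is that the ``in particular'' clause of Theorem \ref{injective dimension formula} \emph{already states} the finiteness characterization you are re-deriving by hand; the paper's proof of Theorem \ref{right asid theorem} therefore consists of nothing beyond the identification $\lambda_r = \Theta_A^0$ and citing that clause, so your detour through the supremum formula is correct but redundant.

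One phrasing in your last paragraph should be tightened. You assert ``the standard inequality, that $\injdim_\Lambda X$ of a nonzero cohomologically bounded complex is bounded below by the negative of its top cohomological degree,'' which is stated backwards: the correct statement, from Theorem \ref{Avramov-Foxby:theorem 2.4}(2), is that for nonzero $X$ one has $\injdim_\Lambda X \geq \sup\{m : \tuH^m(X) \neq 0\}$, i.e.\ the injective dimension is bounded below by the top cohomological degree itself, not its negative. Also, you do not need to ``extract cohomological boundedness from the graded injective resolution machinery'': it is immediate. Both $\RHom_\Lambda(C^a, \Lambda)$ and $\RHom_\Lambda(C^{a+1}, C)$ have cohomology concentrated in degrees $\geq 0$ (they are $\RHom$ of honest modules into honest modules), so the cone of $\Theta_A^a$ has cohomology in degrees $\geq -1$. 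Hence for nonzero $\cone(\Theta_A^a)$ one gets $\injdim_\Lambda \cone(\Theta_A^a) \geq -1$ uniformly in $a$, and the inequality $\injdim_\Lambda \cone(\Theta_A^a) \leq d - a - 1$ forces $\cone(\Theta_A^a) = 0$ for $a > d$. With that fix, your argument is complete.
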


We would like to mention that the results of this paper came out of  the study of finitely graded IG-algebras. 
Recall that a graded  algebra is called Iwanaga-Gorenstein (IG) if it is graded Noetherian on both sides and 
has finite graded self-injective dimension on both sides. 
Representation theory of (graded and  ungraded) IG-algebra was 
initiated by Auslander-Reiten \cite{AR}, Happel \cite{Happel}  and Buchweitz \cite{Buchweitz},  
has been studied by many researchers 
and is recently getting interest from other areas  \cite{ART,BIRS,GLS,Keller-Reiten,Kimura 1,Kimura 2}. 

As is explained in Section \ref{quasi-Veronese}, 
every finitely graded algebra is graded Morita equivalent to a trivial extension algebra. 
Hence, representation theoretic study of finitely graded algebras can be reduced to that of trivial extension algebras.  
By Theorem \ref{Introduction theorem 3}, 
 a trivial extension algebra $A = \Lambda \oplus C$ is IG 
if and only if 
$C$ satisfies the right \asid \ conditions and the left version of them, the left \asid \ conditions.  
As is proved in Proposition \ref{reduction of asid conditions}, 
if $\Lambda$ is IG, then the \asid \ conditions are simplified. 
In the subsequent paper \cite{higehaji},  
we prove that 
in the case where $\Lambda$ is IG, 
the right and left  \asid  \ conditions  has a categorical interpretation.   
Using this interpretation we establish a relationship between the derived category $\sfD^{\mrb}(\mod \Lambda)$ 
of $\Lambda$ and the stable category $\stabgrCM A$ of graded Cohen-Macaulay modules over $A$ 
and provide several applications. 

In \cite{anodai},   
we introduce a new class of finitely graded IG-algebra called \textit{homologically well-graded (hwg) IG-algebra}, 
and show that it posses nice characterizations from several view points. 
In particular, we  characterize the condition  that $A = \Lambda \oplus C$ is hwg  
in terms of the right and left asid numbers  $\alpha_{r}, \alpha_{\ell}$ which is introduced in Definition \ref{asid number definition}.

The paper is organized as follows. 
In Section \ref{Homological algebra of finitely graded algebras}, 
we discuss homological algebra of finitely graded algebras. 
In particular, we study relationships   
between  graded homological dimensions  and ungraded homological dimensions.

In Section \ref{Homological dimension},
we recall the projective dimension and the injective dimension for unbounded complexes introduced by  Avramov-Foxby \cite{Avramov-Foxby}.

In Section \ref{A criterion of perfectness}, we establish  the formula for the projective dimensions. 
The key tool is the decomposition of complexes of graded projective $A$-modules 
according to the degree of generators introduced by Orlov \cite{Orlov}. 
 Analyzing the decomposition, we relate a projective resolution of $M$ as graded $A$-modules 
 with that of $M$ as a $\Lambda$-module via the iterated derived tensor products $C^{a}$.

 Among other things, 
in  Corollary \ref{description of kernel},   
we give a description of the kernel $\Ker \varpi$ of the 
 canonical functor $\varpi: \sfD^{\mrb}(\mod \Lambda) \to \grSing A$ 
 where $\grSing A$ is the graded singular derived category of $A$. 
This result also plays an important role in \cite{higehaji}.

 In Section \ref{A criterion for finiteness of injective dimension}, 
 we establish the formula for the injective dimension. 
 The key tool is the decomposition of complexes of graded injective $A$-modules.  
Analyzing the decomposition,  
we  relate an injective  resolution of $M$ as graded $A$-modules 
 with that of $M$ as a $\Lambda$-module via the derived Hom functors  $\RHom_{\Lambda}(C^{a},-)$. 
We give a criterion that $A= \Lambda \oplus C$ is IG in terms of $\Lambda$ and $C$. 
We see that if $\Lambda$ is IG, then the condition is simplified.

In Section \ref{Upper triangular matrix algebras}, 
we discuss an upper triangular matrix algebra $A =\begin{pmatrix} \Lambda_{0} & C \\ 0 & \Lambda_{1} \end{pmatrix}$, 
which is an example of  a trivial extension  algebra.   
Although,  Chase's problem was already  solved as a corollary of the main result of \cite{Palmer-Roos} and  \cite{Lofwall}, 
we give our own answer.  
Since every $A$-module $M$ has a canonical grading such that $M_{i} = 0$ for $i \neq 0,1$, 
we obtain formulas of the projective dimension and the injective dimension of $M$ 
in terms of $\Lambda_{0}$, $\Lambda_{1}$ and $C$.
Moreover, as immediate corollaries,  
we deduce other known results concerning on upper triangular algebras.

\subsection{Notation and convention}

Let $\Lambda$ be an algebra. 
Unless otherwise stated, the word  ``$\Lambda$-modules" means right $\Lambda$-modules.  
We denote by $\Mod \Lambda$ the category of $\Lambda$-modules. 
We denote by $\Proj \Lambda$ (resp. $\Inj \Lambda$) 
the full subcategory of projective (resp. injective) $\Lambda$-modules. 
We denote  by $\proj  \Lambda$ the full subcategory of finitely generated projective modules.

We denote the opposite algebra  by $\Lambda^{\op}$. 
We identify left $\Lambda$-modules with (right) $\Lambda^{\op}$-modules.   
A $\Lambda$-$\Lambda$-bimodule $D$ is always assumed to be $\kk$-central, 
i.e., $ad = da $ for $d \in D, \ a \in \kk$. 
For a $\Lambda$-$\Lambda$-bimodule $D$, 
we denote by $D_{\Lambda}$ and ${}_{\Lambda} D$ 
the underlying right  and left $\Lambda$-modules respectively. 
So for example, 
$\injdim_{\Lambda^{\op}}  {}_{\Lambda} D$ denotes the injective dimension of $D$ regarded a left $\Lambda$-module.

In the paper, the degree of graded modules $M$ is usually  denoted by the characters $i,j ,\dots$. 
The degree, which is called the cohomological degree, of complexes $X$  is usually denoted by the characters $m,n, \dots$.

\newpage

\vspace{10pt}
\noindent
\textbf{Acknowledgment}


The authors thank M. Sato for pointing out the reference \cite{Sakano}. 
They also thank O. Iyama for the comments on the fist draft of the paper.  
The first author  was partially  supported by JSPS KAKENHI Grant Number 26610009.
The second author  was partially  supported by JSPS KAKENHI Grant Number 26800007.


\section{Homological algebra of finitely graded algebras}\label{Homological algebra of finitely graded algebras}

In this paper, a graded algebra is always a non-negatively graded algebra $A = \bigoplus_{i \geq 0} A_{i}$. 
A graded algebra $A=\bigoplus_{i \geq 0}A_{i}$ is called \textit{finitely graded} if $A_{i}= 0$ for $i \gg 0$. 
In this Section \ref{Homological algebra of finitely graded algebras}, 
we collect basic facts about homological algebra of  finitely graded algebras.  
Another aim is to introduce constructions $\frkp_{i} P, \  \frkt_{i} P, \ \frki_{i} I , \ \frks_{i} I $  
which play a central role in this paper.

\subsection{Notation and convention for graded algebras and graded modules}

Let $A = \bigoplus_{i \geq 0} A_{i}$ be a graded algebra. 
We denote by $\GrMod A$ the category of graded (right) $A$-modules $M= \bigoplus_{i \in \ZZ} M_{i}$ 
and graded $A$-module homomorphisms $f: M \to N$, 
which, by definition, preserves degree of $M$ and $N$, 
i.e., $f(M_{i}) \subset N_{i}$ for $i \in \ZZ$.  
We denote by $\GrProj A$ (resp. $\GrInj A$) the full subcategory of graded projective (resp. graded injective) modules. 
We denote by $\grproj  A$ the full subcategory of finitely generated graded projective modules.

For a graded $A$-module $M$ and an integer $j \in \ZZ$, 
we define the shift $M(j) \in \GrMod A$ by $(M(j))_{i} = M_{i+j}$.  
We define the truncation $M_{\geq  j}$ by 
$(M_{\geq  j})_{i} =M_{i} \ (i \geq j), \ \ (M_{\geq j})_{i} = 0  \ ( i < j)$. 
We set $M_{ < j} := M/M_{\geq j}$ so that we have an exact sequence $ 0\to M_{\geq j} \to M \to M_{< j} \to 0$.

For $M, N \in \GrMod A, \ n \in \NN$ and $i \in \ZZ$,  
we set $\grExt_{A}^{n}(M,N)_{i}:= \Ext_{\GrMod A}^{n}(M,N(i))$ 
and 
\[
\grExt_{A}^{n}(M, N) := \bigoplus_{i \in \ZZ} \grExt_{A}^{n}(M, N)_{i} = \bigoplus_{i \in \ZZ} \Ext_{\GrMod A}^{n}(M, N(i)). 
\]
We note the obvious equation $\grHom_{A}(M,N)_{0} = \Hom_{\GrMod A}(M,N)$. 
 We denote by $\Hom_{A}(M,N)$ the $\Hom$-space as ungraded $A$-modules. 
 We note that there exists  the canonical map $\grHom_{A}(M, N) \to \Hom_{A}(M,N)$ 
and it become an isomorphism if $M$ is finitely generated. 

   For further details of graded algebras and graded modules  we refer the readers to \cite{NV:Graded and Filtered}.

\subsection{Graded projective dimension and ungraded projective dimension  of graded $A$-modules}
In the rest of  this section, 
 $A = \bigoplus_{i= 0}^{\ell} A_{i}$ is a finitely graded algebra. 
 We note that $\ell$ is a natural number such that $A_{i} = 0$ for $i \geq \ell+1$ 
 and that it is not necessary to assume $A_{\ell} \neq 0$.  
For notational simplicity we set $\Lambda := A_{0}$.

The following lemma can be easily checked and  is left to the readers. 
\begin{lemma}\label{basic projective lemma}
\begin{enumerate}[(1)]
\item 
Let $P$ be a graded projective $A$-module. 
Then, for all $i \in \ZZ$, the module $(P \otimes_{A} \Lambda)_{i}$ is a projective $\Lambda$-module. 

\item 
Let $Q$ be a projective $\Lambda$-module. 
Then, the graded module $Q \otimes_{\Lambda} A$ is a graded projective $A$-module. 
\end{enumerate}
\end{lemma}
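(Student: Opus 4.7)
The plan is to reduce both parts to the case of a graded free module of rank one, where the claim becomes a direct identification. For part (1), I would first observe that the functor $-\otimes_{A}\Lambda \colon \GrMod A \to \GrMod \Lambda$, together with the degree-$i$ extraction functor, commutes with arbitrary direct sums and preserves direct summands. Since any graded projective $A$-module $P$ is a direct summand of a graded free module $\bigoplus_{k\in K} A(j_{k})$, it therefore suffices to check that $(A(j)\otimes_{A}\Lambda)_{i}$ is a projective $\Lambda$-module for each $j\in\ZZ$ and $i\in\ZZ$. Regarding $\Lambda=A/A_{\geq 1}$ as a graded $A$-$\Lambda$-bimodule concentrated in degree $0$, the natural identification $A(j)\otimes_{A}\Lambda\cong \Lambda(j)$ shows that each such graded piece is either $0$ or $\Lambda$, both of which are projective over $\Lambda$.

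For part (2), I would proceed in the same spirit. The algebra $A$ is a graded $\Lambda$-$A$-bimodule via the inclusion $\Lambda=A_{0}\hookrightarrow A$, so for any $\Lambda$-module $Q$ the tensor product $Q\otimes_{\Lambda}A$ carries a natural graded right $A$-module structure with $(Q\otimes_{\Lambda}A)_{i}=Q\otimes_{\Lambda}A_{i}$. Writing a projective $\Lambda$-module $Q$ as a direct summand of a free module $\Lambda^{(I)}$ exhibits $Q\otimes_{\Lambda}A$ as a direct summand of $\Lambda^{(I)}\otimes_{\Lambda}A\cong A^{(I)}$ in $\GrMod A$, and since any direct summand of a graded free $A$-module is graded projective, the claim follows.

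I do not expect any genuine obstacle: both statements are just the graded incarnations of the familiar fact that base change along an algebra map sends projectives to projectives, applied to the augmentation $A\to\Lambda$ and the inclusion $\Lambda\to A$ respectively. The only point that requires care is keeping the grading convention $(M(j))_{i}=M_{i+j}$ consistent, so that the nonzero piece of $\Lambda(j)$ appears in the correct degree.
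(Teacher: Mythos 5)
Your proof is correct, and it is essentially the standard argument the paper is gesturing at when it says the lemma ``can be easily checked and is left to the readers'': reduce to graded free modules (for (1), identifying $A(j)\otimes_{A}\Lambda$ with $\Lambda$ placed in a single degree; for (2), extending scalars on a summand of a free module), using that both the base-change functors and the degree-$i$ extraction are additive, preserve direct sums, and hence preserve direct summands. The one convention you flag — $(M(j))_{i}=M_{i+j}$, so the nonzero piece of $\Lambda(j)$ sits in degree $-j$ — is indeed the only place an error could creep in, and you have handled it correctly.
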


Since the grading of $A$ is finite, 
the following Nakayama type Lemma follows. 

\begin{lemma}\label{Nakayama lemma} 
Let $M$ be a graded $A$-module. 
Then $M = 0$ if and only if $M \otimes_{A} \Lambda = 0$. 
\end{lemma}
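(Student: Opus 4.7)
The forward direction is immediate, so the real content is the converse: assuming $M \otimes_A \Lambda = 0$, I want to conclude $M = 0$. My plan is to rewrite the hypothesis in a more useful form and then exploit the finiteness of the grading via a nilpotent-ideal argument. I deliberately avoid trying to imitate the classical finitely generated Nakayama lemma (``pick the minimal nonzero degree''), since $M$ is allowed to be an arbitrary graded $A$-module and in particular need not be bounded below.

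The first step is to identify $\Lambda = A_0$ with the quotient $A/A_{\geq 1}$, where $A_{\geq 1} = \bigoplus_{i \geq 1} A_i$ is the graded augmentation ideal. The short exact sequence $0 \to A_{\geq 1} \to A \to \Lambda \to 0$ of graded $A$-$A$-bimodules yields the canonical isomorphism $M \otimes_A \Lambda \cong M/M \cdot A_{\geq 1}$, so the hypothesis translates to the equality $M = M \cdot A_{\geq 1}$.

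The second, and decisive, step is the observation that $A_{\geq 1}$ is a nilpotent two-sided ideal of $A$. This is exactly where the assumption that $A$ is finitely graded enters. Because the multiplication respects the grading, one has $(A_{\geq 1})^{k} \subseteq A_{\geq k}$ for every $k \geq 1$, and since $A_i = 0$ for $i > \ell$, this gives $(A_{\geq 1})^{\ell+1} \subseteq A_{\geq \ell+1} = 0$. Iterating the relation $M = M \cdot A_{\geq 1}$ a total of $\ell+1$ times then forces
\[
M \;=\; M \cdot (A_{\geq 1})^{\ell+1} \;=\; 0.
\]

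There is no genuine obstacle here; the point I want to highlight is conceptual. For an infinitely graded algebra (for example a polynomial ring), the augmentation ideal is only ``topologically'' nilpotent and the naive iteration fails, so one is forced to restrict either to bounded-below modules or to finitely generated ones. The finite grading hypothesis $A_i = 0$ for $i > \ell$ upgrades topological nilpotence to honest nilpotence, which is strong enough to kill an arbitrary graded module without any boundedness or finite-generation assumption on $M$.
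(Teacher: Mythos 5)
Your argument is correct, and it fills in a gap the paper leaves implicit: the authors simply assert that the Nakayama-type lemma ``follows'' from the finiteness of the grading, without spelling out why. Your route — rewriting $M \otimes_A \Lambda \cong M/MA_{\geq 1}$, observing that $(A_{\geq 1})^{\ell+1} \subseteq A_{\geq \ell+1} = 0$, and iterating $M = MA_{\geq 1}$ — is exactly the right mechanism, and your remark about why the argument would not work for a merely topologically nilpotent augmentation ideal correctly identifies where the finite-grading hypothesis is doing the work. You are also right to be cautious about the ``minimal nonzero degree'' variant: a general graded $A$-module need not be bounded below, so that version of the Nakayama argument does not directly apply; the nilpotence of $A_{\geq 1}$ is what lets the conclusion hold for arbitrary graded modules without any boundedness or finite-generation hypothesis on $M$. (One could alternatively argue degree-by-degree: fixing $i$, expand $M_i = \sum_{j=1}^{\ell} M_{i-j}A_j$ recursively $\ell+1$ times to land in $A_{\geq \ell+1} = 0$; this is the same nilpotence phenomenon phrased locally, and buys nothing over your cleaner global statement.)
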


For an integer $i \in \ZZ$, 
we denote  by
$\frkp_{i}: \GrProj A \to \Proj \Lambda$ the functor $\frkp_{i} P := (P \otimes_{A} \Lambda)_{i}$. 
We define a graded $A$-module  $\frkt_{i} P$ to be $\frkt_{i}P := (\frkp_{i}P ) \otimes_{\Lambda} A (-i)$.  
We may consider $\frkp_{i} P$ as the space of generators of $P$ having degree $i$.

\begin{lemma}[\textup{cf. \cite[Proposition 2.6]{MM}}]\label{projective decomposition lemma} 
Let $P$ be an object of $\GrProj A$.   
Then,  we have an isomorphism of graded $A$-modules 
\begin{equation}
 P  \cong \bigoplus_{ i\in \ZZ} \frkt_{i} P. 
\end{equation} 
\end{lemma}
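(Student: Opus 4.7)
The plan is to construct an explicit isomorphism $\phi : \bigoplus_{i \in \ZZ} \frkt_{i}P \to P$ and to verify both surjectivity and injectivity by applying Nakayama's lemma (Lemma \ref{Nakayama lemma}) after tensoring with $\Lambda$.

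First I would construct the map degree by degree. The canonical surjection $\pi : P \twoheadrightarrow P \otimes_{A} \Lambda$ restricts in degree $i$ to a $\Lambda$-linear surjection $\pi_{i} : P_{i} \twoheadrightarrow \frkp_{i}P$. Since $\frkp_{i}P$ is a projective $\Lambda$-module by Lemma \ref{basic projective lemma}(1), I can choose a $\Lambda$-linear section $\sigma_{i} : \frkp_{i}P \to P_{i}$. Viewing $P_{i} \subset P$ as sitting in graded degree $i$, the tensor--hom adjunction for the graded $(\Lambda,A)$-bimodule $A(-i)$ extends $\sigma_{i}$ uniquely to a graded $A$-module homomorphism $\tilde\sigma_{i} : \frkt_{i}P = \frkp_{i}P \otimes_{\Lambda} A(-i) \to P$, $x \otimes a \mapsto \sigma_{i}(x)\cdot a$. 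Summing these over $i$ yields a graded $A$-module morphism $\phi := \bigoplus_{i} \tilde\sigma_{i} : \bigoplus_{i \in \ZZ} \frkt_{i}P \to P$; this is well defined on each graded component because $A$ is finitely graded (so only finitely many $\tilde\sigma_{i}$ contribute to a given degree), but one does not actually need this: direct sums of graded homomorphisms make sense in general.

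Next I would analyze $\phi$ after applying $- \otimes_{A} \Lambda$. From $A(-i) \otimes_{A} \Lambda \cong \Lambda(-i)$ concentrated in degree $i$, one gets $\frkt_{i}P \otimes_{A} \Lambda \cong \frkp_{i}P$ sitting in degree $i$, so that
\[
\Bigl(\bigoplus_{i \in \ZZ} \frkt_{i}P\Bigr) \otimes_{A} \Lambda \;\cong\; \bigoplus_{i \in \ZZ} \frkp_{i}P \;=\; P \otimes_{A} \Lambda.
\]
By construction, the induced map $\phi \otimes_{A} \Lambda$ restricts in degree $i$ to $\pi_{i} \circ \sigma_{i} = \id_{\frkp_{i}P}$, so it is an isomorphism. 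Applying Nakayama's lemma to $\Cok \phi$ gives $\Cok \phi = 0$, i.e.\ $\phi$ is surjective. Since $P$ is graded projective the short exact sequence $0 \to \Ker \phi \to \bigoplus_{i} \frkt_{i}P \to P \to 0$ splits, so tensoring with $\Lambda$ remains exact and gives $(\Ker \phi) \otimes_{A} \Lambda = 0$; a second application of Nakayama's lemma yields $\Ker \phi = 0$, and $\phi$ is an isomorphism.

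No step here is a serious obstacle: the only point that requires a little care is verifying compatibility of the adjunction extension $\sigma_{i} \leadsto \tilde\sigma_{i}$ with the grading (so that $\tilde\sigma_{i}$ lands in $P$ and not in some completion) and identifying $\phi \otimes_{A} \Lambda$ correctly. Both are routine but need the finitely graded hypothesis implicitly through Lemma \ref{Nakayama lemma}, whose role is to transfer the degreewise isomorphism statement back to a statement about $P$ itself.
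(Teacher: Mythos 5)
Your proof is correct and follows essentially the same strategy as the paper's: build a comparison morphism between $P$ and $\tilde{P} := \bigoplus_{i}\frkt_{i}P$ that becomes an isomorphism after applying $-\otimes_{A}\Lambda$, then invoke Lemma~\ref{Nakayama lemma} twice to kill the cokernel and the kernel. The only (minor) variation is in the construction and in one exactness step: you produce $\phi\colon \tilde{P}\to P$ by choosing $\Lambda$-linear sections of the $\pi_{i}$ (via Lemma~\ref{basic projective lemma}(1)) and extending by adjunction, and you get $(\Ker\phi)\otimes_{A}\Lambda=0$ by splitting the sequence using graded projectivity of $P$; the paper instead lifts $p\colon P\to P\otimes_{A}\Lambda$ along $\tilde{p}\colon \tilde{P}\to P\otimes_{A}\Lambda$ using graded projectivity of $P$ to get $f\colon P\to\tilde{P}$, and uses flatness of $\tilde{P}$ to control $\Ker f\otimes_{A}\Lambda$.
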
  

\begin{proof}
For simplicity, we set $\tilde{P} :=\bigoplus_{ i\in \ZZ} \frkt_{i} P$. 
We remark that $\tilde{P} = (P\otimes_{A} \Lambda) \otimes_{\Lambda} A$ 
and it is a graded projective $A$-module by Lemma \ref{basic projective lemma}.  
There are canonical surjective graded $A$-module homomorphisms $p: P \to P \otimes_{A} \Lambda$ 
and $\tilde{p} :  \tilde{P} \to P \otimes_{A} \Lambda$. 
Since $P$ is a  graded projective $A$-module, there exists a graded $A$-module homomorphism 
$f: P \to \tilde{P}$ such that $f \otimes \Lambda$ is an isomorphism. 
Since $\tilde{P}$ is a flat as an ungraded $A$-module, we have $(\Cok f) \otimes_{A} \Lambda = 0, \  (\Ker f ) \otimes_{A} \Lambda = 0$. 
Therefore $\Cok f = 0, \ \Ker f = 0$ by Lemma \ref{Nakayama lemma}.  
\end{proof} 

Later we will use the following corollary.

\begin{corollary}\label{projective Hom decomposition corollary}
We have the following isomorphism for $P \in \GrProj A$ and $M \in \GrMod A$ 
\[
\Hom_{\GrMod A}(P,M) \cong \prod_{i\in \ZZ} \Hom_{\Lambda}(\frkp_{i}P, M_{i}). 
\]
\end{corollary}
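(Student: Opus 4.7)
The plan is to derive the $\Hom$-decomposition directly from the module-level decomposition of Lemma \ref{projective decomposition lemma}, followed by the tensor–hom adjunction for the free graded $A$-module construction $(-) \otimes_{\Lambda} A$.

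First I would rewrite $P$ using Lemma \ref{projective decomposition lemma} as $P \cong \bigoplus_{i \in \ZZ} \frkt_{i} P = \bigoplus_{i \in \ZZ} (\frkp_{i} P) \otimes_{\Lambda} A(-i)$, and then apply the functor $\Hom_{\GrMod A}(-,M)$. Since $\Hom$ turns a direct sum in the first argument into a product, this yields
\[
\Hom_{\GrMod A}(P,M) \cong \prod_{i \in \ZZ} \Hom_{\GrMod A}\bigl( (\frkp_{i} P) \otimes_{\Lambda} A(-i),\, M \bigr).
\]

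Next I would identify each factor using the graded version of the tensor–hom adjunction for the change of rings $\Lambda \hookrightarrow A$. A graded $A$-module homomorphism from $(\frkp_{i} P) \otimes_{\Lambda} A(-i)$ to $M$ is uniquely determined by its restriction to the degree-$i$ component, which is $\frkp_{i}P$, and this restriction is an arbitrary $\Lambda$-linear map $\frkp_{i}P \to M_{i}$. Concatenating over all $i$ gives the claimed isomorphism
\[
\Hom_{\GrMod A}(P,M) \cong \prod_{i \in \ZZ} \Hom_{\Lambda}(\frkp_{i} P, M_{i}).
\]

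There is no real obstacle here; the content of the corollary is entirely carried by Lemma \ref{projective decomposition lemma}. The only point that requires small care is bookkeeping for the shift $(-i)$: the module $(\frkp_{i}P) \otimes_{\Lambda} A(-i)$ is concentrated in degrees $\geq i$ with its degree-$i$ part equal to $\frkp_{i}P$, so pairing it against $M$ in $\GrMod A$ naturally sees $M_{i}$ rather than some other graded piece. Once this is observed, the corollary is immediate.
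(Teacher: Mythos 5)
Your proof is correct and follows essentially the same route as the paper: the paper writes out the two mutually inverse maps $\Phi$ and $\Psi$ explicitly, where $\Psi$ is precisely the tensor--hom adjunction applied factor by factor after the decomposition $P \cong \bigoplus_i \frkt_i P$ from Lemma \ref{projective decomposition lemma}. Your adjunction phrasing packages the same computation more abstractly, but the underlying argument is identical.
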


\begin{proof}
We define a map 
$\Phi: \Hom_{\GrMod A}(P,M) \to \prod_{i\in \ZZ} \Hom_{\Lambda}(\frkp_{i}P, M_{i})$ in the following way. 
Let $f: P \to M$ be a graded $A$-module homomorphism. 
For $i \in \ZZ$, 
we define the $i$-th component $\Phi(f)_{i}: \frkp_{i} P \to  M_{i}$ of $\Phi(f)$ to be 
the composite map 
\[
\Phi(f)_{i} : \frkp_{i} P \xrightarrow{ \ \mathsf{in}_{i} \ } P \xrightarrow{ \ f \ } M \xrightarrow{ \ \mathsf{ pr }_{i} \ } M_{i} 
\]
where $\mathsf{in}_{i}$ is a canonical inclusion and $\mathsf{pr}_{i}$ is a canonical projection.

We define a map 
$\Psi: \prod_{i \in \ZZ} \Hom_{\Lambda}(\frkp_{i}P, M_{i}) \to \Hom_{\GrMod A}(P,M)$ 
in the following way. 
Let $g= (g_{i})_{i \in \ZZ}$ be a collection of $\Lambda$-module homomorphisms 
$g_{i}: \frkp_{i} P \to M_{i}$. 
Observe  that $g_{i}$ extends to a graded $A$-module homomorphism 
$\hat{g}_{i}: \frkt_{i} P = \frkp_{i} P \otimes_{\Lambda} A \to M$. 
We define $\Psi(g): P \to M$ to be the sum 
\[
\Psi(g) :=\sum_{ i \in \ZZ} \hat{g}_{i} : P \cong \bigoplus_{i \in \ZZ} \frkt_{i} P\xrightarrow{ \ \  \ \ }  M. 
\]
We can check that $\Phi$ and $\Psi$ are the inverse map to each other.
\end{proof}

For a graded module $M$, 
its graded projective dimension and its (ungraded) projective dimension coincide. 
We remark that this statement is true for any $\ZZ$-graded algebra $A$ (\cite[I.3.3.12]{NV:Graded and Filtered}).
For convenience of the readers, we give a proof in the case where $A$ is finitely graded. 

\begin{proposition}\label{finitely graded projective dimension lemma}
For a graded $A$-module $M$, we have $\pd M = \grpd M$.  
\end{proposition}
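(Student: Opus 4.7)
The plan is to prove the two inequalities $\pd M \le \grpd M$ and $\grpd M \le \pd M$ separately.

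For $\pd M \le \grpd M$, it suffices to show that every graded projective $A$-module is projective as an ungraded $A$-module, since then any graded projective resolution of $M$ automatically doubles as an ungraded one of the same length. This follows from Lemma~\ref{projective decomposition lemma}: a graded projective $P$ decomposes as $P \cong \bigoplus_{i \in \ZZ} \frkp_i P \otimes_\Lambda A(-i)$, and each $\frkp_i P$ is a direct summand of some free $\Lambda^{(I)}$, so $\frkp_i P \otimes_\Lambda A$ is a direct summand of $A^{(I)}$ and in particular ungraded projective. A direct sum of ungraded projectives is ungraded projective.

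For the reverse inequality $\grpd M \le \pd M$, assume $n := \pd M < \infty$ and choose a graded projective resolution $\cdots \to P_1 \to P_0 \to M \to 0$. Let $K := \Ker(P_{n-1} \to P_{n-2})$ with $P_{-1} := M$. By the first step the $P_i$ are ungraded projective, and the assumption $\pd M = n$ then forces $K$ to be ungraded projective as well. The remaining reduction is to the following key claim: \emph{a graded $A$-module $N$ which is projective as an ungraded module is already graded projective}. Given this, the truncated sequence $0 \to K \to P_{n-1} \to \cdots \to P_0 \to M \to 0$ is a graded projective resolution witnessing $\grpd M \le n$.

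I would prove the key claim by checking the lifting property directly. Given a graded surjection $g : X \twoheadrightarrow Y$ and a graded map $f : N \to Y$, produce an ungraded lift $\tilde f : N \to X$ using ungraded projectivity of $N$, and then extract its degree-preserving part
\[
\tilde f_{gr}(n) := \pi_k(\tilde f(n)) \qquad (n \in N_k),
\]
where $\pi_k : X \to X_k$ is the projection onto the degree-$k$ component. The verification that $\tilde f_{gr}$ is a graded $A$-module homomorphism lifting $f$ is pure degree bookkeeping: for $n \in N_k$ and $a \in A_m$ the equation $\tilde f(na) = \tilde f(n) a$ combined with the fact that right multiplication by $a$ shifts degree by $m$ yields $\pi_{k+m}(\tilde f(n) a) = \pi_k(\tilde f(n)) a$, and one checks $g \tilde f_{gr}(n) = \pi_k(g \tilde f(n)) = \pi_k(f(n)) = f(n)$ using that $f$ is graded. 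The main obstacle is precisely this key claim; the argument above is essentially formal, and I expect no real difficulty—note in particular that it does not actually require the finiteness of the grading on $A$, which enters the overall proof only through the first inequality.
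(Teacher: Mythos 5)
Your proof is correct, but it takes a genuinely different route from the paper's in the direction $\grpd M \le \pd M$. The paper also reduces to showing that the $n$-th syzygy $K$ in a graded projective resolution is graded projective, but it does so indirectly: it observes that $L := K \otimes_A \Lambda$ is ungraded projective over $\Lambda$ (hence graded projective over $\Lambda$, $\Lambda$ being concentrated in degree $0$), takes a graded $\Lambda$-linear section of $K \twoheadrightarrow L$, extends it to a graded $A$-module map $L \otimes_\Lambda A \to K$, and shows this is an isomorphism by the Nakayama-type argument of Lemma~\ref{Nakayama lemma} --- which crucially uses that $A$ is finitely graded. Your argument instead proves directly the general fact that a graded $A$-module that is projective as an ungraded module is already graded projective, by producing an ungraded lift and projecting it onto its degree-preserving part. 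Your degree bookkeeping is sound: for $n \in N_k$ and $a \in A_m$, $\tilde f(n)a$ has its degree-$(k+m)$ component equal to $\pi_k(\tilde f(n))a$, so $\tilde f_{gr}$ is $A$-linear of degree $0$, and $g$ being graded commutes with the projections, giving $g\tilde f_{gr} = f$. As you note, this argument never uses the finiteness of the grading, which matches the remark in the paper that the proposition holds for any $\ZZ$-graded algebra (citing N\u{a}st\u{a}sescu--van Oystaeyen); the paper's proof is a finitely-graded-specific shortcut that avoids the lifting computation by leaning on the machinery ($\frkp_i$, $\frkt_i$, and the Nakayama lemma) already set up for later use, while yours is the self-contained textbook argument. (Minor point: for the first inequality you invoke Lemma~\ref{projective decomposition lemma}, but that is heavier than needed --- a graded projective is by definition a direct summand of a graded free module, which is ungraded free, so it is ungraded projective without any decomposition result; this is what the paper uses.)
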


\begin{proof}
The inequality $\pd M \leq \grpd M$ follows from the fact that 
if we forget the grading from a graded projective $A$-module, then it become a projective $A$-module.

We prove $\pd M \geq \grpd M$. 
We may assume  $n:= \pd M < \infty$. 
Take a projective resolution 
$ \cdots \to P^{-1} \xrightarrow{\partial^{-1}} P^{0} \to M$ in $\GrMod A$.
It is enough to show that $K : = \Ker \partial^{-(n-1)}$ is a  graded projective $A$-module. 
Since $K$ is a projective $A$-module, 
the graded $\Lambda$-module 
$L := K \otimes_{A} \Lambda$ become a projective $\Lambda$-module 
if we forget the grading.  
Hence $L$ is  projective as a graded $\Lambda$-module. 
The canonical surjection $\mathsf{cs} : K \to L$ has a section $\mathsf{sc} : L \to K $ in $\GrMod \Lambda$, 
which  extends to a graded $A$-module homomorphism $\widehat{\mathsf{sc}}: L \otimes_{\Lambda} A \to K$. 
In the same way of the proof of Lemma \ref{projective decomposition lemma},  we can check that $\widehat{\mathsf{sc}}$ is an isomorphism.
\end{proof}

\subsection{Graded injective dimension and ungraded injective dimension of graded $A$-modules}

The following Lemma can be easily checked and  is left to the readers. 

\begin{lemma}\label{basic injective lemma}
\begin{enumerate}[(1)]
\item Let $I$ be a graded injective $A$-module. 
Then, for all $i \in \ZZ$,  
the $\Lambda$-module $\grHom_{A}(\Lambda, I)_{i}$ is an injective $\Lambda$-module. 
Moreover, 
we have $\grHom_{A}(\Lambda, I) = \Hom_{A}(\Lambda, I)$ as ungraded $\Lambda$-modules.

\item Let $J$ be an injective $\Lambda$-module. 
We regard $J$ as a graded $\Lambda$-module concentrated in degree $0$. 
Then the graded $A$-module $\grHom_{\Lambda}(A, J)$ is a graded injective $A$-module. 
Moreover we have $\grHom_{\Lambda}(A,J) =\Hom_{\Lambda}(A, J)$ as ungraded $A$-modules 
and it is an ungraded injective $A$-module. 
\end{enumerate}
\end{lemma}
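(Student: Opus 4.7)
The plan is to prove both parts by exploiting the standard (co)induction adjunctions attached to the inclusion $\Lambda = A_0 \hookrightarrow A$ and the augmentation $A \twoheadrightarrow \Lambda$, and then to upgrade the graded conclusions to ungraded ones using the finite-gradedness of $A$.

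For (1), I would start from the identification
\[
\grHom_A(\Lambda, I)_i = \Hom_{\GrMod A}(\Lambda, I(i)) = \{x \in I_i \mid x A_{\geq 1} = 0\},
\]
valid because $\Lambda = A/A_{\geq 1}$ as a graded right $A$-module. Writing $\nu_i N$ for a right $\Lambda$-module $N$ viewed as a graded right $A$-module concentrated in degree $i$ (via the augmentation), one has the adjunction
\[
\Hom_\Lambda(N, \grHom_A(\Lambda, I)_i) \cong \Hom_{\GrMod A}(\nu_i N, I).
\]
The functor $\nu_i$ is exact and $I$ is graded injective, so the right side is exact in $N$; hence $\grHom_A(\Lambda, I)_i$ is injective as a $\Lambda$-module.

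For (2), I would symmetrically invoke the restriction-coinduction adjunction
\[
\Hom_{\GrMod A}(M, \grHom_\Lambda(A, J)) \cong \Hom_{\GrMod \Lambda}(M, J),
\]
coming from the $(A,\Lambda)$-bimodule structure on $A$, where on the right side $M$ is viewed as a graded right $\Lambda$-module by restriction. Since $\Lambda$ is concentrated in degree $0$, a graded right $\Lambda$-module is simply a $\ZZ$-indexed family of $\Lambda$-modules, so $J$ placed in degree $0$ is graded injective iff it is injective; as the restriction functor is exact, the right side is exact in $M$, which yields the graded $A$-injectivity of $\grHom_\Lambda(A, J)$. The ungraded analogue $\Hom_A(M, \Hom_\Lambda(A, J)) \cong \Hom_\Lambda(M, J)$ of the same adjunction then proves that $\Hom_\Lambda(A, J)$ is injective as an ungraded $A$-module.

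It remains to verify the two \emph{moreover} equalities. For (1), an ungraded $A$-linear map $\Lambda \to I$ is determined by an element $f(1) \in I$ annihilated by $A_{\geq 1}$; decomposing $f(1) = \sum_i m_i$ with $m_i \in I_i$, the annihilation condition splits by degree (as $m_i \cdot a \in I_{i+j}$ for $a \in A_j$), yielding $\Hom_A(\Lambda, I) = \bigoplus_i \{x \in I_i \mid x A_{\geq 1} = 0\} = \grHom_A(\Lambda, I)$. For (2), a right $\Lambda$-linear map $A \to J$ is nothing but a family $(\phi_j : A_j \to J)_{j \geq 0}$, and since $A_j = 0$ for $j > \ell$ this family is finite, so $\Hom_\Lambda(A, J) = \bigoplus_j \Hom_\Lambda(A_j, J) = \grHom_\Lambda(A, J)$. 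No step presents a real obstacle --- which is why the paper states the lemma is ``easily checked'' --- the only care needed is to keep track of which $\Lambda$-action on $A$ is being used and to invoke the finite-gradedness of $A$ precisely at the passage from graded to ungraded Hom.
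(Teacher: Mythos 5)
Your proof is correct. The paper leaves this lemma as an exercise, and the standard (co)induction adjunction argument you give is exactly what is intended: for (1) the adjunction between ``place in degree $i$ via the augmentation'' and the degree-$i$ socle functor $I \mapsto \{x \in I_i \mid xA_{\geq 1}=0\} \cong \grHom_A(\Lambda,I)_i$, and for (2) the restriction--coinduction adjunction (both graded and ungraded), with the ``moreover'' parts being direct computations in which the finite-gradedness of $A$ is only genuinely needed for (2).
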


For an integer $i \in \ZZ$, 
we denote  by
$\frki_{i}: \GrInj A \to \Inj \Lambda$ the functor $\frki_{i}I := \grHom_{A}(\Lambda, I)_{i}$. 
Then, we obtain a graded injective $A$-module  
$\frks_{i}I := \grHom_{\Lambda}(A,  \frki_{i}I)(-i)$.  

\begin{lemma}\label{decomposition of graded injective modules} 
Let $I$ be an object of $\GrInj A$.   
Then,  we have the following isomorphism of graded $A$-modules 
\[
 I  \cong \bigoplus_{ i\in \ZZ} \frks_{i} I.
 \]
\end{lemma}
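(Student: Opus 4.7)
The plan is to dualize the proof of Lemma \ref{projective decomposition lemma}, replacing $-\otimes_A \Lambda$ with the socle functor $\grHom_A(\Lambda,-)$ and Nakayama's Lemma \ref{Nakayama lemma} with an essential-socle (dual Nakayama) statement. First I would verify that $\tilde I := \bigoplus_{i\in\ZZ} \frks_i I$ is graded injective in $\GrMod A$. Since $\frks_i I = \grHom_\Lambda(A, \frki_i I)(-i)$ is concentrated in degrees $[i-\ell,i]$, only finitely many summands contribute in any given degree, so $\bigoplus_{i\in\ZZ} \frks_i I$ agrees with $\prod_{i\in\ZZ} \frks_i I$ in $\GrMod A$; each factor is graded injective by Lemma \ref{basic injective lemma}(2), and a product of injective objects is injective.

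Next I would identify the socles. By inspection $\grHom_A(\Lambda, I) = \bigoplus_{i\in\ZZ} \frki_i I(-i)$ as graded $A$-modules with trivial $A_{>0}$-action, and the tensor--Hom adjunction gives a natural isomorphism $\grHom_A(\Lambda, \grHom_\Lambda(A, J)) \cong J$ for any graded $\Lambda$-module $J$. Taking $J = \grHom_A(\Lambda, I)$ canonically identifies the socle of $\tilde I$ with that of $I$, so the two counits $\epsilon_I : \grHom_A(\Lambda, I) \hookrightarrow I$ and $\epsilon_{\tilde I} : \grHom_A(\Lambda, I) \hookrightarrow \tilde I$ share the same source. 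Using graded injectivity of $\tilde I$, I would extend $\epsilon_{\tilde I}$ along the monomorphism $\epsilon_I$ to obtain a graded $A$-module morphism $g : I \to \tilde I$ with $g \epsilon_I = \epsilon_{\tilde I}$; applying $\grHom_A(\Lambda,-)$ then yields $\grHom_A(\Lambda, g) = \id$.

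To conclude that $g$ is an isomorphism, I would first establish the essential-socle statement dual to Lemma \ref{Nakayama lemma}: since $A$ is finitely graded, $A_{>0}$ is nilpotent, so for any nonzero graded $A$-module $M$ and any nonzero $m \in M$ a minimal $k \ge 1$ with $m A_{>0}^k = 0$ exists, and any nonzero element of $m A_{>0}^{k-1}$ lies in $\grHom_A(\Lambda, M)$; hence $\grHom_A(\Lambda, M) = 0$ implies $M = 0$. Left exactness of $\grHom_A(\Lambda,-)$ combined with $\grHom_A(\Lambda, g) = \id$ forces $\grHom_A(\Lambda, \Ker g) = 0$, so $\Ker g = 0$. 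Since $I$ is graded injective, the monomorphism $g$ splits as $\tilde I = g(I) \oplus X$; applying $\grHom_A(\Lambda,-)$ once more and using that $\grHom_A(\Lambda, g)$ is already an isomorphism yields $\grHom_A(\Lambda, X) = 0$, whence $X = 0$ and $g$ is an isomorphism.

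The main obstacle I anticipate is tracking the canonical identifications carefully enough that $\grHom_A(\Lambda, g)$ is literally the identity on $\grHom_A(\Lambda, I)$; once this compatibility is in place, the essential-socle statement plays a precise dual role to Nakayama's lemma in the proof of Lemma \ref{projective decomposition lemma}.
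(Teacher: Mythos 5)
Your proof is correct and follows essentially the same route as the paper: you establish that $\tilde I$ is graded injective because it is locally finite (hence a product in $\GrMod A$) and that both $I$ and $\tilde I$ have the same essential socle $\grHom_A(\Lambda,I)$, then conclude $I\cong\tilde I$. The only difference is that you unwind the ``uniqueness of injective hull'' step explicitly (via the dual-Nakayama observation $\grHom_A(\Lambda,M)=0\Rightarrow M=0$ and a direct-summand argument), whereas the paper simply invokes that uniqueness.
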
  
 
\begin{proof}
First observe that   the direct sum $\tilde{I} := \bigoplus_{i\in \ZZ} \frks_{i} I$ is finite in each degree. 
More precisely, 
$\tilde{I}_{j} = \bigoplus_{i \in \ZZ} (\frks_{i}I)_{j} = \bigoplus_{i=j }^{j+ \ell} (\frks_{i}I)_{j}$. 
It follows that $\tilde{I}$ is the direct product of $\{\frks_{i} I\}_{i\in \ZZ}$ in the category $\GrMod A$. 
Therefore,  $\tilde{I}$ is an injective object of $\GrMod A$

It can be checked that both $I$ and $\bigoplus_{i\in \ZZ} \frks_{i} I$ 
contain $\grHom_{A} (\Lambda, I) = \bigoplus_{i \in \ZZ} \frki_{i} I$ 
as an essential submodule. 
Hence by uniqueness of injective hull, we conclude the desired isomorphism. 
\end{proof}

The following Lemma is an injective version of Lemma \ref{projective Hom decomposition corollary}. 
The proof is left to the readers.

\begin{corollary}\label{injective Hom decomposition corollary}
We have the following isomorphism for $I \in \GrInj A$ and $M \in \GrMod A$. 
\[
\Hom_{\GrMod A}(M, I ) \cong \prod_{i\in \ZZ} \Hom_{\Lambda}(M_{i} , \frki_{i}I). 
\]
\end{corollary}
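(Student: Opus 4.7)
The plan is to dualize the proof of Corollary \ref{projective Hom decomposition corollary}, using Lemma \ref{decomposition of graded injective modules} in place of Lemma \ref{projective decomposition lemma}. Concretely, I would construct mutually inverse maps $\Phi$ and $\Psi$ between $\Hom_{\GrMod A}(M, I)$ and $\prod_{i \in \ZZ} \Hom_\Lambda(M_i, \frki_i I)$, relying on two ingredients: the decomposition $I \cong \bigoplus_{i \in \ZZ} \frks_i I$ from Lemma \ref{decomposition of graded injective modules}, and the graded Hom-tensor adjunction $\Hom_{\GrMod A}(-, \grHom_\Lambda(A, -)) \cong \Hom_{\GrMod \Lambda}(-, -)$.

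To define $\Phi$: given $f: M \to I$, combine $f$ with the projection $\mathsf{pr}_i: I \to \frks_i I$ coming from the decomposition, and observe that $(\frks_i I)_i = \grHom_\Lambda(A, \frki_i I)_0 = \Hom_\Lambda(A_0, \frki_i I) = \frki_i I$; then $\Phi(f)_i: M_i \to \frki_i I$ is the degree-$i$ component of $\mathsf{pr}_i \circ f$. To define $\Psi$: given a family $g = (g_i)_{i \in \ZZ}$ with $g_i: M_i \to \frki_i I$, first view $g_i$ as a graded $\Lambda$-homomorphism $M \to \frki_i I(-i)$ that is zero outside degree $i$, then apply the adjunction to extend it to a graded $A$-homomorphism $\hat{g}_i: M \to \grHom_\Lambda(A, \frki_i I)(-i) = \frks_i I$. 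Assembling the $\hat{g}_i$ yields $\Psi(g): M \to \prod_{i \in \ZZ} \frks_i I \cong I$.

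The only genuine—though still mild—subtlety is the identification $\prod_{i \in \ZZ} \frks_i I \cong \bigoplus_{i \in \ZZ} \frks_i I = I$, which fails in general for arbitrary direct sums in $\GrMod A$ but holds here because each graded piece $\bigoplus_i (\frks_i I)_j$ involves only finitely many nonzero summands, as verified within the proof of Lemma \ref{decomposition of graded injective modules}. Without this observation, the family $(\hat{g}_i)_{i \in \ZZ}$ would not automatically assemble into a single graded $A$-homomorphism into $I$. Once this is in place, checking $\Phi \circ \Psi = \id$ and $\Psi \circ \Phi = \id$ amounts to unwinding both directions of the adjunction against the decomposition, and verifying on each factor that the projection-and-inclusion construction and the adjunction construction are mutually inverse—entirely routine.
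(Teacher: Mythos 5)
Your proof is correct and is precisely the dualization the paper has in mind: it replaces the decomposition $P \cong \bigoplus_i \frkt_i P$ and the induction adjunction with $I \cong \bigoplus_i \frks_i I$ and the coinduction adjunction $\Hom_{\GrMod A}(-, \grHom_\Lambda(A,-)) \cong \Hom_{\GrMod \Lambda}(-,-)$. You have also correctly flagged the one genuine asymmetry with the projective case: in the projective version $\Psi(g)$ is a map \emph{out of} a direct sum, which needs no extra hypothesis, whereas here the family $(\hat g_i)$ a priori only defines a map into $\prod_i \frks_i I$, and one must invoke the fact (established inside the proof of Lemma~\ref{decomposition of graded injective modules}, using that $A$ is finitely graded) that this product coincides with $\bigoplus_i \frks_i I = I$.
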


For a finitely graded module $M$, 
its graded injective  dimension and (ungraded) injective dimension coincide. 

\begin{proposition}\label{finitely graded injective dimension lemma}
For a finitely graded $A$-module $M$, we have  $\injdim M = \grinjdim M$. 
\end{proposition}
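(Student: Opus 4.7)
The plan is to prove the two inequalities $\grinjdim M \leq \injdim M$ and $\injdim M \leq \grinjdim M$ separately; only the second uses the finitely graded hypothesis on $M$. For $\grinjdim M \leq \injdim M$, I would invoke the higher graded Baer criterion: $\grinjdim M \leq n$ holds iff $\grExt^{n+1}_{A}(A/J, M) = 0$ for every graded right ideal $J \subset A$. Since $A/J$ is cyclic and hence finitely generated as a graded $A$-module, the canonical map $\grExt^{n+1}_{A}(A/J, M) \to \Ext^{n+1}_{A}(A/J, M)$ is an isomorphism. Thus the assumption $\injdim M \leq n$ forces both sides to vanish for every $J$, whence $\grinjdim M \leq n$.

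For the reverse inequality $\injdim M \leq \grinjdim M$, assume $n := \grinjdim M < \infty$ and take a minimal graded injective resolution $0 \to M \to I^0 \to I^1 \to \cdots \to I^n \to 0$. The central claim, proved by induction on $j$, is that each $I^j$ is itself finitely graded. Granting this, Lemma~\ref{decomposition of graded injective modules} expresses $I^j$ as a \emph{finite} direct sum $\bigoplus_{i} \frks_{i} I^j$; each summand $\frks_{i} I^j = \grHom_{\Lambda}(A, \frki_{i} I^j)(-i)$ is injective as an ungraded $A$-module by Lemma~\ref{basic injective lemma}(2); and a finite direct sum of injectives is injective. Hence each $I^j$ is ungraded injective, so the given resolution is simultaneously an ungraded injective resolution, yielding $\injdim M \leq n$.

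The inductive boundedness is the main obstacle. For $j = 0$, the graded $\Lambda$-module $\grHom_A(\Lambda, I^0)$ is, by the adjunction between restriction along the augmentation $\pi\colon A \to \Lambda$ and its right adjoint $\grHom_A(\Lambda, -)$, the graded $\Lambda$-injective envelope of the submodule $\grHom_A(\Lambda, M) \subset M$; the latter is finitely graded, and since $\Lambda$ is concentrated in a single degree, so is its graded $\Lambda$-injective envelope. Hence only finitely many $\frki_i I^0$ are nonzero, and because $A$ is finitely graded each $\frks_i I^0$ has bounded degree support, making the finite sum $I^0 = \bigoplus_i \frks_i I^0$ finitely graded. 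For $j \geq 1$, $I^{j}$ is the graded injective envelope of a quotient of $I^{j-1}$, which is finitely graded by the inductive hypothesis, and the same argument repeats.
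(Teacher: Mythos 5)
Your proof is correct, but it takes a noticeably different route from the paper's for one of the two inequalities, so let me compare and also flag a small overclaim.

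The paper proves a single claim about injective hulls---for a finitely graded module $N$, the graded injective hull of $N$ is finitely graded and moreover coincides with the ungraded injective hull, using that graded-essential equals ungraded-essential \cite[I.3.3.13]{NV:Graded and Filtered}---and then extracts both inequalities from it: $\injdim M \leq \grinjdim M$ because the minimal graded resolution has ungraded-injective terms, and $\grinjdim M \leq \injdim M$ by showing that the $n$-th cosyzygy (with $n = \injdim M$) in the minimal graded resolution is finitely graded, ungraded injective, and hence (being essential in its finitely graded graded-injective hull) graded injective.

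Your treatment of $\injdim M \leq \grinjdim M$ is morally the same as the paper's: you show each term of the minimal graded resolution is finitely graded, then decompose it via Lemma~\ref{decomposition of graded injective modules} and apply Lemma~\ref{basic injective lemma}(2). You actually flesh out a step the paper leaves as ``it can be checked'' --- why the graded injective envelope of a finitely graded module is finitely graded --- by passing to the graded $\Lambda$-injective envelope of $\grHom_A(\Lambda,-)$, which is a nice addition (though you should verify that $\grHom_A(\Lambda,M)$ is essential in $\grHom_A(\Lambda,I^0)$, which is true but not spelled out).

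Your treatment of $\grinjdim M \leq \injdim M$ via the graded Baer criterion is genuinely different from the paper and is a clean alternative; it even works for graded modules that are not finitely graded. However, the assertion that the comparison map $\grExt^{n+1}_A(A/J,M) \to \Ext^{n+1}_A(A/J,M)$ is an \emph{isomorphism} because $A/J$ is finitely generated is not justified: the usual argument for that requires a resolution of $A/J$ by finitely generated graded projectives, which is not available without a Noetherian or coherence hypothesis on $A$ (none is assumed here). Fortunately you only need the map to be \emph{injective}, and that does hold unconditionally: for any graded projective resolution $P^\bullet$ of $A/J$, the inclusion $\Hom_{\GrMod A}(P^\bullet, M(i)) \hookrightarrow \Hom_A(P^\bullet,M)$ admits a retraction $f \mapsto f_i$ (the degree-$i$ component of $f$), which commutes with the Hom-complex differentials, so the induced map on cohomology is a split monomorphism; summing over $i$, the map $\grExt^{n+1}_A(A/J,M) \to \Ext^{n+1}_A(A/J,M)$ is injective. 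Thus $\Ext^{n+1}_A(A/J,M)=0$ forces $\grExt^{n+1}_A(A/J,M)=0$, and your argument goes through once ``isomorphism'' is replaced by ``split monomorphism''. Finally, note that had you also invoked the graded/ungraded essentiality equivalence, the second half of your proof would yield both inequalities at once, as in the paper, making the Baer argument a pleasant but optional alternative.
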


\begin{proof}
We claim that 
an injective hull $I$ of $M$ in $\GrMod A$ is finitely graded and is an injective hull in $\Mod A$. 
Indeed,  it can be checked that $I$  is finitely graded and  $I = \bigoplus_{i \in \ZZ} \frks_{i}I$ is finite sum. 
Therefore it is  injective as an ungraded $A$-module. 
It can be checked that a graded essential submodule $N \subset L$ of a graded module $L$  
is an essential (ungraded) submodule (and vice versa) \cite[I.3.3.13]{NV:Graded and Filtered}. 
In particular,  $M$ is an essential $A$-submodule of $I$. 
Thus,  we conclude that $I$ is an injective hull of $M$ in $\Mod A$.

Using the claim, we can easily  check  $\injdim M \leq \grinjdim M$. 

Assume that $n := \injdim M < \infty$. 
Take a minimal injective resolution 
$ 0 \to M \to I^{0} \xrightarrow{ \partial^{0}} I^{1} \to \cdots $ in $\GrMod A$. 
Then $K =\Coker \partial^{n-2}$ is finitely graded and  injective  as an ungraded $A$-module. 
It follows from  the claim that  $K$ is a graded injective $A$-module. 
Hence we conclude that $\injdim M \geq \grinjdim M$. 
\end{proof}

We point out the following immediate consequence. 

\begin{corollary}\label{finitely graded IG-lemma}
We have the following equation
\[
\injdim A_{A} = \grinjdim A_{A}.
\]
\end{corollary}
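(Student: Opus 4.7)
The plan is very short: this is essentially a direct application of the preceding Proposition \ref{finitely graded injective dimension lemma} to the specific module $M = A_{A}$.

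First, I would observe that since $A = \bigoplus_{i=0}^{\ell} A_{i}$ is finitely graded by hypothesis (with $A_{i} = 0$ for $i \geq \ell+1$), the right module $A_{A}$ itself, viewed as a graded $A$-module via its own grading, satisfies the hypothesis of Proposition \ref{finitely graded injective dimension lemma}, namely being a finitely graded $A$-module.

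Then I would simply invoke that proposition with $M := A_{A}$ to conclude $\injdim A_{A} = \grinjdim A_{A}$. There is no additional obstacle; the only verification needed is the trivial one that the regular module $A_{A}$ is finitely graded, which is immediate from the assumption on $A$. Hence the corollary requires essentially one line of argument and is correctly labelled an immediate consequence.
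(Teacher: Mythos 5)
Your proposal is correct and matches the paper's reasoning: the corollary is stated as an immediate consequence of Proposition \ref{finitely graded injective dimension lemma}, obtained by taking $M = A_{A}$, which is finitely graded since $A = \bigoplus_{i=0}^{\ell} A_{i}$. Nothing further is needed.
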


The graded global dimension and the (ungraded) global dimension coincide. 
Moreover this can be measured by  the category of finitely graded modules. 

\begin{proposition}[{cf. \cite[I.7.8]{NV:Graded and Filtered}}]\label{global dimension lemma} 
The following equations hold: 
\[
\begin{split}
\gldim A  & = \grgldim A \\
                &= \sup \{ \grpd M \mid M \textup{ a finitely graded $A$-module.}\}\\
                &= \sup \{ \grinjdim M \mid M \textup{ a finitely graded $A$-module.}\}\\
                &= \sup \{ \pd M \mid M  \textup{ an $A$-module such that $MA_{\geq 1} = 0$.}\}\\
                             &= \sup \{ \injdim M \mid M  \textup{ an $A$-module such that $MA_{\geq 1} = 0$.}\}
                 \end{split}
\]
\end{proposition}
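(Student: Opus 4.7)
The plan hinges on the elementary observation that the graded ideal $A_{\geq 1}$ of a finitely graded algebra is nilpotent: since $A_{i}=0$ for $i>\ell$, any product of $\ell+1$ elements of positive degree lies in degree $>\ell$, so $(A_{\geq 1})^{\ell+1}=0$. Consequently every descending filtration of an $A$-module by powers of $A_{\geq 1}$ has length at most $\ell+1$, and this is what will allow us to reduce the global dimension to a supremum over modules in the augmentation category.

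First I would collect the trivial inequalities
\[
\sup\{\grpd M \mid M \textup{ finitely graded}\} \leq \grgldim A \leq \gldim A, \quad \sup\{\pd M \mid MA_{\geq 1}=0\} \leq \gldim A,
\]
where the middle inequality uses Proposition \ref{finitely graded projective dimension lemma} to identify $\grpd$ and $\pd$ on graded modules. The same proposition gives the bridge: every $\Lambda$-module $M$ with $MA_{\geq 1}=0$ is a finitely graded $A$-module when placed in a single degree, and its graded and ungraded projective dimensions agree, so
\[
\sup\{\pd M \mid MA_{\geq 1}=0\} \leq \sup\{\grpd M \mid M \textup{ finitely graded}\}.
\]
The analogous chain of inequalities for injective dimensions follows from Proposition \ref{finitely graded injective dimension lemma}.

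The main step is to close the cycle by showing $\gldim A \leq \sup\{\pd M \mid MA_{\geq 1}=0\}$. For an arbitrary $A$-module $N$, I would consider the descending filtration
\[
N \supset NA_{\geq 1} \supset N(A_{\geq 1})^{2} \supset \cdots \supset N(A_{\geq 1})^{\ell+1}=0,
\]
which is finite by the nilpotency noted above. Since $(A_{\geq 1})^{i}\cdot A_{\geq 1} \subseteq (A_{\geq 1})^{i+1}$, each subquotient $N(A_{\geq 1})^{i}/N(A_{\geq 1})^{i+1}$ is annihilated by $A_{\geq 1}$. A routine induction on the filtration length, invoking the long exact sequence of $\Ext^{\bullet}_{A}(-,X)$ for each successive short exact sequence of subquotients, then yields
\[
\pd N \leq \max_{0 \leq i \leq \ell} \pd \bigl( N(A_{\geq 1})^{i}/N(A_{\geq 1})^{i+1} \bigr) \leq \sup\{\pd M \mid MA_{\geq 1}=0\},
\]
which closes the cycle.

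For the injective side, I would run the dual argument using the ascending filtration by successive annihilators of powers of $A_{\geq 1}$,
\[
0 \subset \{n \in N \mid nA_{\geq 1}=0\} \subset \{n \in N \mid n(A_{\geq 1})^{2}=0\} \subset \cdots \subset \{n \in N \mid n(A_{\geq 1})^{\ell+1}=0\} = N,
\]
which is again finite of length $\leq \ell+1$ with subquotients annihilated by $A_{\geq 1}$. The covariant version of the same inductive Ext argument then produces the required bound on $\injdim N$. I expect the only conceptual obstacle to be psychological: $\gldim A$ is a priori a supremum over all $A$-modules, but the nilpotency of $A_{\geq 1}$ reduces it to a supremum over the much smaller class of $\Lambda$-modules, with the required filtration of uniformly bounded length $\ell+1$.
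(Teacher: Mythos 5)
Your proposal is correct and follows essentially the same route as the paper: collect the easy inequalities via Propositions \ref{finitely graded projective dimension lemma} and \ref{finitely graded injective dimension lemma}, then close the cycle by filtering an arbitrary $A$-module along the nilpotent ideal $A_{\geq 1}$ so that the subquotients are killed by $A_{\geq 1}$. The only cosmetic differences are that the paper filters by $MA_{\geq i}$ rather than $M(A_{\geq 1})^{i}$, and disposes of the injective side with a one-line ``in the same way,'' whereas you spell out the dual annihilator filtration; both variants work.
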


\begin{proof}
For simplicity, we denote by  $v_{p}$  the $p$-th value of the above equation. 
For example, $v_{1} := \gldim A, \ v_{2} := \grgldim A$.  
The equation $ v_{1} \geq v_{2}$ follows from 
Proposition \ref{finitely graded projective dimension lemma}. 
It is clear that $v_{2} \geq v_{3}$. 
Since an $A$-module $M$ such that $MA_{\geq 1} = 0$ can be regarded as a graded $A$-module 
concentrated at $0$-th degree, 
we can see that $v_{3} \geq v_{5}$ by Proposition \ref{finitely graded projective dimension lemma}.  
We prove the inequality $v_{5} \geq v_{1}$. 
Since an  $A$-module $M$ has a filtration  $M_{i} := MA_{\geq i}$ for $i = 1, \dots ,\ell$, 
whose graded quotients $N_{i} =M_{i}/M_{i+1}$ satisfy $N_{i} A_{\geq 1} =0$, 
we conclude that $\pd M \leq  v_{5}$. 
In the same way, we can prove $v_{2} \geq v_{4} \geq v_{6} \geq v_{1}$.  
\end{proof}

\subsection{Quasi-Veronese algebras and Beilnson algebras for finitely graded algebras}\label{quasi-Veronese}

This Section \ref{quasi-Veronese} does  not relate  to the main subject of this paper in a strict sense. 
We explain an importance of trivial extension algebras 
by showing that 
every finitely graded algebra $A$ is graded Morita equivalent to 
the  trivial extension algebra  $\nabla A \oplus \Delta A$ equipped with the standard grading.

Let $A$ be a finitely graded algebra. 
We fix a natural number $\ell$ such that $A_{i} = 0$ for $ i \geq \ell +1$. 
(It is not necessary to assume that $A_{\ell} \neq 0$.)  
In this situation, we define \textit{the Beilinson algebra} $\nabla A$ of $A$ 
(which  rigorously should be  called the Beilinson algebra of the pair $(A, \ell)$) 
and its bimodule $\Delta A$ to be 
\[
\nabla A: = 
\begin{pmatrix} 
A_{0} & A_{1} & \cdots & A_{\ell -1} \\
0         & A_{0} & \cdots  & A_{\ell -2} \\
\vdots & \vdots     &        & \vdots \\
0         & 0   & \cdots  & A_{0}
\end{pmatrix}, \ \ \ 
\Delta A: = 
\begin{pmatrix} 
A_{\ell} & 0 & \cdots & 0 \\
A_{\ell- 1} & A_{\ell} & \cdots  &0 \\
\vdots & \vdots     &        & \vdots \\
A_{1} & A_{2}   & \cdots  & A_{\ell}
\end{pmatrix} 
\]
where the algebra structure and the bimodule structure are 
given by the  matrix  multiplications. 
Then, 
the trivial extension algebra 
 $\nabla A \oplus \Delta A$ 
with the grading $\deg \nabla A = 0, \deg  \Delta A = 1$ 
is  nothing but the $\ell$-th quasi-Veronese algebra $A^{[\ell]}$ of $A$ 
introduced by Mori \cite[Definition 3.10]{Mori B-construction}. 
\[
A^{[\ell]} = \nabla A \oplus \Delta A. 
\]

\begin{remark}
In \cite{Mori B-construction}, the multiplication of the quasi-Veronese algebra 
is defined by so called the opposite of the matrix  multiplication, 
which is different from our definition. 
However, this difference occurs from the notational difference. 
\end{remark}

By \cite[Lemma 3.12]{Mori B-construction} 
$A$ and $A^{[\ell]} $ are graded Morita equivalent to each other. 
More precisely, 
the functor $\sf{qv}$ below gives a $\kk$-linear equivalence. 
\[
\begin{split}
 & \mathsf{qv}: \GrMod A \xrightarrow{ \ \simeq  \ } \GrMod A^{ [ \ell ]}, \\
& \mathsf{qv}(M) := \bigoplus_{i \in \ZZ} \mathsf{qv}(M)_{i}, \ \ \ \ 
\mathsf{qv}(M)_{i} =  M_{i\ell} \oplus M_{i \ell +1 } \oplus \cdots \oplus M_{( i+ 1)\ell -1 }
\end{split}
\]
We note that $\mathsf{qv}$ does not commute with the degree shifts $(1)$ of each categories. 
However, under this equivalence 
the degree shift $(\ell)$ by $\ell$ of $\GrMod A$ corresponds to 
the degree shift $(1)$ by $1$  of $\GrMod A^{[\ell]}$.

We give a quick explanation. 
We set $V := A \oplus A(-1) \oplus \cdots \oplus A(- (\ell -1))$. 
Then the set $\{ V(i \ell ) \mid i \in \ZZ \}$ is 
a set of finitely generated projective generators 
of the abelian category $\GrMod A$. 
Therefore, in the same way of  Morita theory, 
we can construct a $\kk$-linear equivalence 
$\mathsf{qv} : \GrMod A \to \GrMod E$ 
by using $V$ and $(\ell)$, 
where 
$E : = \bigoplus_{i \in \ZZ} \Hom_{\GrMod A} (V, V(i  \ell ) ) $
 is the graded algebra whose multiplication is given by 
 the twisted composition. 
Now by $\Hom_{\GrMod A}(A(-j), A(-i)) \cong A_{j -i}$, 
we can check that $E = A^{[\ell]}$.  
For details, we refer \cite{Mori B-construction}. 

We  
give a summary of the  above consideration and  collect  consequences. 

\begin{lemma}\label{Quasi-Veronese lemma}
Let $A= \bigoplus_{i =0}^{\ell} A_{i}$ be a finitely graded algebra. 
Then, 
$A$ 
is graded Morita equivalent to 
the trivial extension algebra $A^{[\ell]} = \nabla A \oplus \Delta A$.

Moreover the following assertions hold. 
\begin{enumerate}[(1)]
\item $A$ is of finite global dimension if and only if so is $A^{[\ell]}$. 

\item $A$ is Iwanaga-Gorenstein if and only if so is $A^{[\ell]}$. 

\end{enumerate}
\end{lemma}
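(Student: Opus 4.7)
The plan is to take the graded Morita equivalence $\mathsf{qv}\colon \GrMod A \xrightarrow{\simeq} \GrMod A^{[\ell]}$ (from \cite{Mori B-construction}, already outlined in the discussion preceding the lemma) as the main input, and derive (1) and (2) as essentially formal consequences using the results collected earlier in this section. The first assertion of the lemma is just a restatement of this Mori equivalence; the real content lies in (1) and (2). The key feature of $\mathsf{qv}$ I will use is that it sends the progenerator $V = \bigoplus_{j=0}^{\ell-1} A(-j) \in \GrMod A$ to $A^{[\ell]}$ viewed as a graded right module over itself, and intertwines the shift $(\ell)$ on the source with the shift $(1)$ on the target.

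For (1), since $\mathsf{qv}$ is a $\kk$-linear equivalence of abelian categories, it preserves projective objects and hence graded projective dimensions of all objects; therefore $\grgldim A = \grgldim A^{[\ell]}$. By Proposition \ref{global dimension lemma} one has $\gldim A = \grgldim A$ and $\gldim A^{[\ell]} = \grgldim A^{[\ell]}$, so finite global dimension transfers in both directions.

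For (2), I would verify that both defining properties of Iwanaga-Gorenstein — graded Noetherianity on both sides and finite graded self-injective dimension on both sides — transfer through the equivalence. Graded right Noetherianity is a property of the abelian category $\GrMod A$ (the regular module being a graded Noetherian object, together with closure under graded subobjects), hence is preserved by $\mathsf{qv}$. For the right self-injective dimension, the key point is that $\mathsf{qv}$ preserves graded injectives and graded injective dimensions, so
\[
\grinjdim_{A^{[\ell]}} A^{[\ell]} \;=\; \grinjdim_A V \;=\; \max_{0 \leq j \leq \ell-1} \grinjdim_A A(-j) \;=\; \grinjdim A_A,
\]
using invariance of injective dimension under the shift $(-j)$ and additivity (as maximum) over finite direct sums. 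The analogous argument on the left side — applied to the equivalence $\GrMod A^{\op} \simeq \GrMod (A^{[\ell]})^{\op}$ obtained by repeating Mori's construction with the progenerator $V^{\op} = \bigoplus_{j=0}^{\ell-1} A^{\op}(-j)$ — handles the left self-injective dimension. Combined with Corollary \ref{finitely graded IG-lemma} (which lets one pass freely between $\injdim$ and $\grinjdim$ of the regular module), this yields the equivalence claimed in (2).

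The main obstacle I anticipate is the left-handed part: I need the quasi-Veronese construction to interact correctly with passage to the opposite algebra, in the form of a canonical identification $(A^{[\ell]})^{\op} \cong (A^{\op})^{[\ell]}$ (modulo conventions), so that the left-module Morita equivalence is available in parallel with the right-module one. This is essentially a bookkeeping exercise with the matrix-indexed $\nabla A$ and $\Delta A$, but it is exactly the point where one must be careful about the convention on matrix multiplication flagged in the remark following Mori's construction; apart from this, everything reduces to invoking Mori's result and the previously established propositions.
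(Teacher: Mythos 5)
Your proposal is correct, and it fills in precisely the details that the paper leaves implicit: the paper simply records the lemma as a ``summary'' of Mori's graded Morita equivalence without spelling out why (1) and (2) transfer, whereas you supply the argument via preservation of projective/injective dimensions and Noetherian objects under the equivalence, the computation $\mathsf{qv}(V)=A^{[\ell]}$, and the passage between graded and ungraded dimensions via Propositions \ref{global dimension lemma}, \ref{finitely graded injective dimension lemma} and Corollary \ref{finitely graded IG-lemma}. One small streamlining for the left-handed part of (2): rather than rerunning Mori's construction for $A^{\op}$ and then identifying $(A^{\op})^{[\ell]}$ with $(A^{[\ell]})^{\op}$, you can note that $V$ carries a natural $A^{[\ell]}$-$A$-bimodule structure realizing the Morita equivalence, so the left module categories $\GrMod A^{\op}$ and $\GrMod (A^{[\ell]})^{\op}$ are automatically equivalent via $V\otimes_A-$, which sidesteps the transposition bookkeeping you flagged.
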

The definition of Iwanaga-Gorenstein algebras  will be recalled in Section \ref{A criterion of Iwanaga-Gorensteinness}.

\section{Homological dimensions of (unbounded) complexes (after Avramov-Foxby)}\label{Homological dimension}

In this Section \ref{Homological dimension}, 
we recall the definition of homological  dimensions of (unbounded) complexes 
introduced by Avramov-Foxby \cite{Avramov-Foxby}. 

If the reader is only  interested in suitably bounded derived categories, 
it is sufficient to use complexes of projective (resp. injective) modules 
bounded above (resp. below) in place of  DG-projective (resp. DG-injective) complexes whose definition is recalled in 
Definition \ref{DG-projective definition}. 
The statements concerning on DG-projective (resp. DG-injective) complexes 
can be proved  easily for complexes of projective (resp. injective) modules bounded above (resp. below).

For an additive category $\cA$, we denote by $\sfC(\cA)$ and $\sfK(\cA)$  
the category of cochain complexes and cochain morphisms 
and its homotopy category respectively. 
For complexes $X, Y \in \sfC(\cA)$, 
we denote by $\Hom_{\cA}^{\bullet}(X, Y)$ the $\Hom$-complex. 
For an abelian category $\cA$, we denote by $\sfD(\cA)$ the derived category of $\cA$. 

The shift functor of a triangulated category  is denoted by $[1]$.  
The cone of a morphism $f$ in a triangulated category is denoted by $\cone f$.

\subsection{Unbounded derived categories}

We recall a projective resolution and  an injective resolution of unbounded complexes.  
For the details we refer \cite{Keller:ddc}, \cite{Keller:ICM}, \cite{Positselski}. 

\begin{definition}\label{DG-projective definition}
Let $\Lambda$ be an algebra. 
\begin{enumerate}[(1)] 
\item An object $M \in  \sfC(\Mod \Lambda)$ is called \textit{homotopically projective} 
if the complex 
$\Hom_{\Lambda}^{\bullet}(M,N) $ is acyclic 
for an acyclic complex $N \in \sfC(\Mod \Lambda)$. 

\item An object $M \in  \sfC(\Mod \Lambda)$ is called \textit{homotopically injective} 
if the complex 
$\Hom_{\Lambda}^{\bullet}(N, M) $ is acyclic 
for an acyclic complex $N \in \sfC(\Mod \Lambda)$. 

\item 
An object $P \in  \sfC(\Mod \Lambda)$ is called \textit{DG-projective} 
if  it belongs to $\sfC(\Proj \Lambda)$ and is homotopically projective. 

\item 
An object $I \in  \sfC(\Mod \Lambda)$ is called \textit{DG-injective} 
if  it belongs to $\sfC(\Inj \Lambda)$ and is homotopically injective. 
\end{enumerate}
\end{definition}

Let $M$ be an object of $\sfC(\Mod \Lambda)$. 
A quasi-isomorphism $f: P \to M$ with $P$ DG-projective 
is called a \textit{projective resolution} of $M$. 
We often say that $P$ is a DG-projective resolution of $M$ by suppressing a quasi-isomorphism $f: P \to M$. 
A quasi-isomorphism $f: M \to I$ with $P$ DG-injective  
is called an \textit{injective resolution} of $M$. 
We often say that $I$ is a DG-injective resolution of $M$ by suppressing a quasi-isomorphism $f: M \to I$.

It is known that every object of $M \in \sfC(\Mod \Lambda)$ has a projective resolution and an injective resolution \cite[1.4, 1.5]{Positselski}. 
By \cite[8.1]{Positselski}, DG-projective complexes and DG-injective complexes coincide with 
cofibrant DG-modules and fibrant DG-modules in \cite{Keller:ICM}. 
Existence of projective resolutions and injective resolutions are noted in \cite[Proposition 3.1]{Keller:ICM}.

Let $M \in \sfC(\Mod \Lambda)$ and $D$ a $\Lambda$-$\Lambda$-bimodule. 
We recall that the derived tensor product $M \lotimes_{\Lambda} D$ and the derived $\Hom$-space 
$\RHom_{\Lambda}(D, M)$ are defined by using a projective resolution $P \xrightarrow{\sim} M$ 
and an injective resolution $M \xrightarrow{\sim} I$ 
respectively. Namely, $M \lotimes_{\Lambda}D := P \otimes_{\Lambda}D$ 
and $\RHom_{\Lambda}(D, M) := \Hom_{\Lambda}^{\bullet}(D, I)$.

We denote by $\sfK_{\mathsf{proj}}(\Proj \Lambda)$ \ (resp. $\sfK_{\mathsf{inj}}(\Inj \Lambda)$ )  
the full subcategory of  $\sfK(\Mod \Lambda)$ consisting of the homotopy classes of DG-projective (resp. DG-injective)  complexes.  

Then the composite functors below are equivalences of triangulated categories 
\[
\begin{split}
&\sfK_{\mathsf{proj}}(\Proj \Lambda) \hookrightarrow \sfK(\Mod \Lambda) \xrightarrow{ \ \ \mathsf{pr} \ \ } 
\sfD(\Mod \Lambda)\\
&\sfK_{\mathsf{inj}}(\Inj \Lambda) \hookrightarrow \sfK(\Mod \Lambda) \xrightarrow{ \ \ \mathsf{pr} \ \ } 
\sfD(\Mod \Lambda)
\end{split}
\]
where $\mathsf{pr}$ is a canonical projection. 
Moreover, the functor $\mathsf{pr}$ induces isomorphisms below 
\[
\begin{split}
&\Hom_{\sfK( \Mod \Lambda) }(P, M) \cong \Hom_{\sfD(\Mod \Lambda)}(P,M)\\
&\Hom_{\sfK( \Mod \Lambda) }(M, I) \cong \Hom_{\sfD(\Mod \Lambda)}(M,I)
\end{split}
\]
for $P \in \sfK_{\mathsf{proj}}(\Proj \Lambda)$, $I \in \sfK_{\mathsf{inj}}(\Inj \Lambda)$ and $M \in \sfK(\Mod \Lambda)$, 
where we suppress $\mathsf{pr}$ in the right hand sides.

\subsection{Projective dimension and injective dimension of  unbounded complexes}

We recall the definition of 
projective dimension and injective dimension for unbounded complexes 
introduced by Avramov-Foxby \cite{Avramov-Foxby}. 

\begin{definition}[{\cite[Definition 2.1.P and 2.1.I and Theorem 2.4.P and 2.4.I]{Avramov-Foxby}}]\label{definition of homological dimension}
\begin{enumerate}[(1)]

\item  
An object $M$ of $\sfD(\Mod \Lambda)$ is said to have 
\textit{projective dimension} at most $n$ and is denoted as below 
if it has a projective resolution $P$ 
such that $P^{m} = 0$ for $m < -n$. 
\[
\pd_{\Lambda} M \leq n. 
\]
We write $\pd_{\Lambda} M = n $ if $\pd_{\Lambda} M \leq n$ holds
but $\pd_{\Lambda} M \leq n-1$ does not.

\item 
An object $M$ of $\sfD(\Mod \Lambda)$ is said to have 
\textit{injective dimension} at most $n$ 
and is denoted as below  
if it has an injective resolution $I$ 
such that $I^{m} = 0$ for $m > n$. 
\[
\injdim_{\Lambda} M \leq n. 
\] 
We write $\injdim_{\Lambda} M = n $ if $\injdim_{\Lambda} M \leq n$ holds
but $\injdim_{\Lambda} M \leq n-1$ does not.

\end{enumerate}
\end{definition}

From now on,
if there is no danger of confusion, 
we usually write the projective dimension and the injective dimension as  $\pd$ and $\id$.

If $\pd M \leq n$ (resp. $\injdim M < n$) 
for $n \in \ZZ$, then we write $\pd M = - \infty$ (resp. $\injdim M = - \infty$). 
However, $\pd  M = -\infty \Leftrightarrow \injdim M = -\infty \Leftrightarrow M = 0$ 
in $\sfD(\Mod \Lambda)$ (\cite[2.3.P and I]{Avramov-Foxby}).

\begin{theorem}[{Avramov-Foxby \cite[Theorem 2.4.P and 2.4.I]{Avramov-Foxby}}]\label{Avramov-Foxby:theorem 2.4}
Let  $M \in \sfD(\Mod \Lambda)$ be an object and $n \in \ZZ$. 
Then the following assertions hold.
\begin{enumerate}[(1)]
 \item 
The following conditions  are equivalent. 
\begin{enumerate}
\item $\pd M \leq n$. 

\item $\tuH^{m}(M) = 0$ for $m < -n$ 
and there exists a projective resolution 
 $P$ of $M$  
such that $\Coker d_{P}^{-n-1}$ is a projective $\Lambda$-module 
 where $d_{P}^{-n-1}$ is the $(-n-1)$-th differential of $P$. 

\item 
$\tuH^{m}(M) = 0$ for $m < -n$ 
and for any projective resolution $P$ of $M$, $\Cok \partial_{P}^{-n-1}$ is a projective $\Lambda$-module.
\end{enumerate}

\item 
The following conditions are equivalent. 
\begin{enumerate}
\item $\injdim M \leq n$. 

\item 
$\tuH^{m}(M) = 0$ for $m > n$ and 
there exists an injective resolution 
 $I$ of $M$  such that $\Ker d_{I}^{n}$ is an injective $\Lambda$-module 
 where $d_{I}^{n}$ is the $n$-th differential of $I$. 

\item 
$\tuH^{m}(M) = 0$ for $m > n$ and 
for any injective resolution $I$ of $M$, $\Ker \partial_{I}^{n}$ is an injective $\Lambda$-module.
\end{enumerate}
\end{enumerate}
\end{theorem}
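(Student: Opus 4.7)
I will prove part (1) by the cycle (a) $\Rightarrow$ (c) $\Rightarrow$ (b) $\Rightarrow$ (a); part (2) then follows by the formal dual. The step (c) $\Rightarrow$ (b) is tautological: specialize the universally quantified projective resolution in (c) to produce the existential witness of (b). The two substantive arguments are an Ext-identification for (a) $\Rightarrow$ (c) and a direct-sum decomposition for (b) $\Rightarrow$ (a). In both, the vanishing $\tuH^m(M) = 0$ for $m < -n$ is used essentially to extract classical data from unbounded DG-resolutions.

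\textbf{(a) $\Rightarrow$ (c).} From a projective resolution $P$ with $P^m = 0$ for $m < -n$ one immediately reads off $\tuH^m(M) = 0$ for $m < -n$. Given any DG-projective resolution $Q$ of $M$, set $C := \Coker d_Q^{-n-1}$. Because $\tuH^m(Q) = \tuH^m(M) = 0$ for $m < -n$, the tail
\[
\cdots \to Q^{-n-2} \to Q^{-n-1} \to Q^{-n} \to C \to 0
\]
is exact, and is therefore a classical projective resolution of the module $C$. Comparing this with the computation of $\Ext_{\Lambda}^{\bullet}(M, N)$ via the DG-resolution $Q$ yields a natural identification
\[
\Ext^{1}_{\Lambda}(C, N) \cong \Ext^{n+1}_{\Lambda}(M, N)
\]
for every $\Lambda$-module $N$. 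On the other hand, the resolution $P$ from (a) gives $\Ext^{n+1}_{\Lambda}(M, N) = H^{n+1}(\Hom^{\bullet}_{\Lambda}(P, N))$, whose degree $(n+1)$-slot is $\Hom_{\Lambda}(P^{-n-1}, N) = 0$. Hence $\Ext^{n+1}_{\Lambda}(M, -) = 0$, so $\Ext^{1}_{\Lambda}(C, -) = 0$, and $C$ is projective.

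\textbf{(b) $\Rightarrow$ (a).} Given $P$ satisfying (b), form the canonical truncation
\[
\tau^{\geq -n} P = \bigl(\cdots \to 0 \to \Coker d_P^{-n-1} \to P^{-n+1} \to P^{-n+2} \to \cdots\bigr),
\]
which is quasi-isomorphic to $P$ (and hence to $M$) because $\tuH^{m}(M) = 0$ for $m < -n$. The kernel of the natural surjection $P \to \tau^{\geq -n} P$ is a complex $A$ supported in degrees $\leq -n$, agreeing with $P$ in degrees $\leq -n-1$ and equal to $\im d_P^{-n-1}$ in degree $-n$. The projectivity of $\Coker d_P^{-n-1}$ splits the short exact sequence $0 \to \im d_P^{-n-1} \to P^{-n} \to \Coker d_P^{-n-1} \to 0$, exhibiting $\im d_P^{-n-1}$ as a projective direct summand of $P^{-n}$. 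Choosing such a section in degree $-n$, together with the trivial splittings in the other degrees, a short verification shows that the short exact sequence of complexes $0 \to A \to P \to \tau^{\geq -n} P \to 0$ splits, giving $P \cong A \oplus \tau^{\geq -n} P$ as chain complexes. Now $A$ is an acyclic bounded-above complex of projectives, hence contractible; consequently $\tau^{\geq -n} P$ is a direct summand of the DG-projective $P$, is itself DG-projective, and serves as a projective resolution of $M$ witnessing $\pd M \leq n$.

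\textbf{Part (2) and the main obstacle.} Part (2) is the formal dual: projectives are replaced by injectives, $\Coker$ by $\Ker$, the canonical truncation $\tau^{\geq -n}$ by $\tau^{\leq n}$, and one obtains the identification $\Ext^{1}_{\Lambda}(N, K) \cong \Ext^{n+1}_{\Lambda}(N, M)$ for $K := \ker d_I^{n}$; the splitting analogously yields $I \cong \tau^{\leq n} I \oplus B$ with $B$ a bounded-below acyclic, hence contractible, complex of injectives. The main obstacle, to my mind, is the splitting step in (b) $\Rightarrow$ (a): one must propagate the projectivity of a single cokernel into a degreewise splitting of a possibly unbounded complex of projectives that is simultaneously compatible with the differentials. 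The decisive observation is that projectivity of $\Coker d_P^{-n-1}$ automatically forces $\im d_P^{-n-1}$ to be projective, so that the acyclic complement $A$ becomes a bounded-above complex of projectives and is therefore contractible; this is precisely what makes Avramov--Foxby's unbounded-DG definition of projective dimension interact smoothly with classical homological algebra.
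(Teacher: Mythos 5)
The paper cites this theorem from Avramov--Foxby \cite{Avramov-Foxby} without giving a proof, so there is no in-paper argument against which to compare your attempt; I will therefore only assess its correctness. Your proof is correct. The cycle (a)$\Rightarrow$(c)$\Rightarrow$(b)$\Rightarrow$(a) is the right one. In (a)$\Rightarrow$(c), the vanishing $\tuH^m(Q)=0$ for $m<-n$ does make $\cdots \to Q^{-n-1}\to Q^{-n}\to C\to 0$ a classical projective resolution of $C:=\Coker d_Q^{-n-1}$ (exactness at $Q^{-n}$ is tautological, and at $Q^{m}$ for $m\le -n-1$ it is precisely $\tuH^m(Q)=0$), and the identification $\Ext^1_{\Lambda}(C,N)\cong\Ext^{n+1}_{\Lambda}(M,N)$ follows since $H^{n+1}(\Hom^{\bullet}_{\Lambda}(Q,N))$ involves only $Q^{-n},Q^{-n-1},Q^{-n-2}$; the vanishing $\Ext^{n+1}_{\Lambda}(M,-)=0$ read off from the bounded resolution $P$ then forces $C$ projective. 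In (b)$\Rightarrow$(a), your splitting of $P^{-n}\cong \operatorname{im}d_P^{-n-1}\oplus\Coker d_P^{-n-1}$ does generate a subcomplex complementary to the kernel $A$: the subcomplex with $\operatorname{im}(s)$ in degree $-n$ and $P^m$ for $m>-n$ is closed under the differential and is isomorphic, via $s$, to $\tau^{\ge -n}P$ (one checks $d\circ s$ agrees with the induced differential on the cokernel since elements of $\operatorname{im}d_P^{-n-1}$ are killed by $d$). The complement $A$ is a bounded-above acyclic complex of projectives --- projectivity at degree $-n$ being the key consequence of your splitting --- hence contractible, so $\tau^{\ge -n}P$ is a direct summand of $P$ in $\sfK(\Mod\Lambda)$, and being a summand of a DG-projective complex in $\sfC(\Proj\Lambda)$ is itself DG-projective. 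The dualization for part (2) is routine. One very minor stylistic point: in (a)$\Rightarrow$(c) it would be cleaner to say explicitly that $\Ext^{n+1}_{\Lambda}(M,N)$ is well-defined independently of the choice of DG-projective resolution (which is what lets you compute it once with $P$ and once with the arbitrary $Q$), but this is standard and does not constitute a gap.
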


Later we will use the following results several times. 

\begin{theorem}[{\cite[2.3.F.(5) and Theorem 4.1]{Avramov-Foxby}}]\label{Avramov-Foxby:inequality}
Let $M$ be a complex of $\Lambda$-modules and $D$ a complex of $\Lambda$-$\Lambda$-bimodules. 
The we have 
\[
\pd M \lotimes_{\Lambda} D \leq \pd M + \pd D_{\Lambda}, \ \ \
\injdim \RHom_{\Lambda}(D, M) \leq \pd {}_{\Lambda} D + \injdim M. 
\]
\end{theorem}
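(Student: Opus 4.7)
The plan is to derive both inequalities from the derived tensor-hom adjunction together with the cohomological characterization of projective and injective dimension supplied by Theorem~\ref{Avramov-Foxby:theorem 2.4}. I treat the two inequalities in parallel.

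For the projective dimension bound, set $a := \pd M$ and $b := \pd D_\Lambda$, both assumed finite. The condition $\pd D_\Lambda \leq b$ entails that $D$ is cohomologically concentrated in degrees $\geq -b$; combined with a DG-projective resolution $P \xrightarrow{\sim} M$ with $P^m = 0$ for $m < -a$ from Theorem~\ref{Avramov-Foxby:theorem 2.4}, a standard cohomological truncation argument (replace $D$ by a quasi-isomorphic complex concentrated in degrees $\geq -b$ before tensoring with $P$) yields $H^n(M \lotimes_\Lambda D) = 0$ for $n < -(a + b)$. For the Ext-vanishing condition, apply the derived tensor-hom adjunction
\[
\RHom_\Lambda(M \lotimes_\Lambda D, N) \simeq \RHom_\Lambda\bigl(M, \RHom_\Lambda(D, N)\bigr),
\]
valid for arbitrary right $\Lambda$-modules $N$. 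Since $\pd D_\Lambda \leq b$, the complex $\RHom_\Lambda(D, N)$ is cohomologically concentrated in degrees $\leq b$, and the standard hyper-Ext estimate $\Ext^i_\Lambda(M, X) = 0$ for $i > \pd M + \sup\{j : H^j(X) \neq 0\}$ gives vanishing in degrees $> a + b$. Theorem~\ref{Avramov-Foxby:theorem 2.4}(1) then delivers $\pd(M \lotimes_\Lambda D) \leq a + b$.

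The injective dimension inequality is dual. Write $c := \pd {}_\Lambda D$ and $d := \injdim M$, both finite. A DG-injective resolution $M \xrightarrow{\sim} I$ with $I^j = 0$ for $j > d$ immediately gives $H^m \RHom_\Lambda(D, M) \simeq H^m \Hom_\Lambda^\bullet(D, I) = 0$ for $m > d$, which is stronger than the required bound $m > c + d$. For the Ext-vanishing condition, apply the adjunction in the other direction,
\[
\RHom_\Lambda\bigl(N, \RHom_\Lambda(D, M)\bigr) \simeq \RHom_\Lambda(N \lotimes_\Lambda D, M),
\]
and note that $\pd {}_\Lambda D \leq c$ forces $N \lotimes_\Lambda D$ to be cohomologically concentrated in degrees $\geq -c$. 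The dual hyper-Ext estimate, combined with $\injdim M \leq d$, produces vanishing of the right-hand side in degrees $> c + d$, so that $\injdim \RHom_\Lambda(D, M) \leq c + d$ by Theorem~\ref{Avramov-Foxby:theorem 2.4}(2).

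The main technical obstacle is ensuring that each ingredient — the derived tensor-hom adjunction, the implication that finite projective dimension bounds cohomological support, and the hyper-Ext vanishing estimate — remains valid for genuinely unbounded complexes. Each is a standard consequence of the existence of DG-projective and DG-injective resolutions of arbitrary complexes, which is the foundational input of Avramov-Foxby's framework; indeed, the statement is essentially their \cite[Theorem 4.1]{Avramov-Foxby} transferred to the bimodule setting. The only bookkeeping specific to the bimodule case is tracking which side of $D$ induces the right $\Lambda$-module structure on the outputs $M \lotimes_\Lambda D$ and $\RHom_\Lambda(D, M)$, and correspondingly which of $\pd D_\Lambda$ or $\pd {}_\Lambda D$ enters the bound.
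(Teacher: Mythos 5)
The paper gives no proof of this statement: it is quoted directly from Avramov--Foxby \cite{Avramov-Foxby} (their 2.3.F.(5) and Theorem 4.1), carried over to the bimodule setting. Your proposal is therefore a reconstruction of a cited external result rather than a comparison with an in-text argument. The route you take --- derived tensor-$\Hom$ adjunction, soft truncation of the relevant complex, and bounded DG-projective/DG-injective resolutions --- is the expected one and matches the pattern of the Avramov--Foxby proof. The adjunction
$\RHom_\Lambda(M \lotimes_\Lambda D, N) \simeq \RHom_\Lambda\bigl(M, \RHom_\Lambda(D, N)\bigr)$
is set up correctly for bimodules (the right $\Lambda$-structure on $\RHom_\Lambda(D, N)$ comes from the left $\Lambda$-action on $D$), and the hyper-Ext estimates are sound once one passes to suitable truncations.

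There is, however, a genuine slip in the injective half. You assert that $I^j = 0$ for $j > d$ ``immediately gives $H^m\RHom_\Lambda(D, M) = 0$ for $m > d$,'' and call this stronger than the required bound $m > c + d$. That is false for an unbounded complex $D$: the components $\Hom^n_\Lambda(D, I) = \prod_m \Hom_\Lambda(D^m, I^{m+n})$ can be nonzero for arbitrarily large $n$ so long as $D$ has nonzero terms in correspondingly negative cohomological degrees. For a concrete counterexample, take $D = \Lambda$ as a bimodule placed in cohomological degree $-k$ with $k > 0$, and $M$ a nonzero injective module; then $\injdim M = 0 = d$ but $H^k\RHom_\Lambda(D,M) = M \neq 0$, while $c = \pd {}_\Lambda D = k$, so the correct bound $m > c + d = k$ is sharp. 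To obtain the cohomological vanishing you must use $\pd{}_\Lambda D \leq c$ to replace $D$ by the good truncation $\tau_{\geq -c}D$ before applying $\Hom^\bullet_\Lambda(-, I)$, exactly as you do with $D$ in the projective half, and this yields vanishing for $m > c + d$ --- no better. The remainder of your argument (the Ext-vanishing via the adjunction in the other direction and the estimate on $N \lotimes_\Lambda D$) is unaffected and closes the proof once this step is corrected.
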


We point out the following basic properties. 
The proofs are left to the readers. 

\begin{lemma}\label{homological dimension lemma}
\begin{enumerate}[(1)]
\item 
Let $L \to M \to N \to $ be an exact triangle in $\sfD(\Mod \Lambda)$. 
Then, 
\[
\pd M \leq \sup\{\pd L, \pd N \}, \ \ \injdim M \leq \sup\{ \injdim L, \injdim N\}. 
\]

\item 
Let $M \in \sfD(\Mod \Lambda)$ and $n \in \ZZ$. 
Then we have 
\[
\pd( M[n] ) = \pd M + n, \ \ \injdim( M[n] ) =\injdim N - n.
\]
\end{enumerate}
\end{lemma}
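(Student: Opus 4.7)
The plan is to establish both parts by working at the level of DG-projective and DG-injective resolutions, using the characterization of the dimensions given in Definition \ref{definition of homological dimension}.

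For (1), set $p := \pd L$ and $q := \pd N$. The case $\max\{p,q\} = +\infty$ is vacuous, so I assume both are finite. I fix DG-projective resolutions $P_L \xrightarrow{\sim} L$ and $P_N \xrightarrow{\sim} N$ with $P_L^m = 0$ for $m < -p$ and $P_N^m = 0$ for $m < -q$. Using the identification $\Hom_{\sfK(\Mod \Lambda)}(P_N, P_L[1]) \cong \Hom_{\sfD(\Mod \Lambda)}(N, L[1])$ from the preceding subsection, I lift the connecting morphism $N \to L[1]$ of the given triangle to an honest chain map $\tilde{g} : P_N \to P_L[1]$ and form the complex $P_M := \cone(\tilde{g}[-1] : P_N[-1] \to P_L)$. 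This $P_M$ is again DG-projective, and the resulting triangle in $\sfK(\Mod \Lambda)$ maps to the given triangle in $\sfD(\Mod \Lambda)$, so $P_M$ is a DG-projective resolution of $M$. Since $P_M^m \cong P_L^m \oplus P_N^m$, it vanishes for $m < -\max\{p,q\}$, which yields $\pd M \leq \sup\{\pd L, \pd N\}$. The injective-dimension inequality is proved dually: lift an appropriate morphism to a chain map between DG-injective resolutions $I_L, I_N$ with prescribed upper-degree bounds, and form the cone.

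For (2), if $P \xrightarrow{\sim} M$ is a DG-projective resolution then $P[n] \xrightarrow{\sim} M[n]$ is a DG-projective resolution with $(P[n])^m = P^{m+n}$. The vanishing $P^m = 0$ for $m < -\pd M$ translates to $(P[n])^m = 0$ for $m < -\pd M - n$, so $\pd(M[n]) \leq \pd M + n$. Applying the same inequality to $M[n]$ with shift $-n$ gives the reverse inequality, hence equality. The injective case is analogous, reading $(I[n])^m = I^{m+n}$ against the upper bound.

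The argument is largely bookkeeping of degrees; the one point requiring care in (1) is that the cone must be constructed at the level of DG-projective (resp.\ DG-injective) complexes themselves, rather than abstractly in $\sfD(\Mod \Lambda)$, so that one can read off a resolution with the required length. This is legitimate because $\sfK_{\mathsf{proj}}(\Proj \Lambda)$ and $\sfK_{\mathsf{inj}}(\Inj \Lambda)$ each embed as a triangulated subcategory equivalent to $\sfD(\Mod \Lambda)$, so lifts of morphisms exist and cones computed in the homotopy category represent the correct objects in the derived category.
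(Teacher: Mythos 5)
Your proof is correct. The paper explicitly leaves this lemma's proof to the reader, and what you've written is the argument the authors evidently intend: for (1), lift the connecting morphism to a genuine chain map between DG-projective (resp.\ DG-injective) resolutions with the prescribed bounds, form the cone at the level of $\sfK$ so that its components are literal direct sums $P_L^m \oplus P_N^m$, check it is again DG-projective (it is, since DG-projectives form a triangulated subcategory of the homotopy category) and quasi-isomorphic to $M$ (it is, because any two cones of the same morphism in $\sfD(\Mod\Lambda)$ are isomorphic), then read off the vanishing range; for (2), shift a resolution and rebalance indices. The one degenerate case $L=0$ or $N=0$, i.e.\ $\pd=-\infty$, is handled automatically by taking the zero resolution, and your reduction to $p,q<\infty$ disposes of the other trivial case. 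As a very minor side remark, the displayed equality for the injective dimension in the paper has a typo ($\injdim N - n$ should read $\injdim M - n$), which you correctly resolved.
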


For a graded algebra $A$, 
we define the graded version of the above notions 
in a  similar way of Definition \ref{definition of homological dimension} 
by using $\GrMod A,  \GrProj A$ and $\GrInj A$  in place of $\Mod A , \Proj A$ and $\Inj A$.  
The projective dimension and the  injective dimension of an object $M \in \sfC(\GrMod A)$ 
are denoted by $\grpd_{A} M$ and $\grinjdim_{A} M$.

The following is a complex version of Proposition \ref{finitely graded projective dimension lemma}. 

\begin{lemma}\label{finitely graded projective dimension complex lemma}
Let $A$ be a finitely graded algebra 
and $M \in \sfD(\GrMod A)$. 
Then, we have 
\[
\grpd M = \pd M.  
\]
\end{lemma}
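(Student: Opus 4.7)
The strategy is to reduce the complex-level statement to the module-level version already established as Proposition \ref{finitely graded projective dimension lemma}, using the Avramov-Foxby characterisation of projective dimension given in Theorem \ref{Avramov-Foxby:theorem 2.4}. Before starting the two inequalities, I would record the key compatibility with forgetting the grading: a bounded-above complex of graded projective $A$-modules, viewed without its grading, is a bounded-above complex of (ungraded) projective $A$-modules and is therefore DG-projective; cohomology of any complex is computed identically in $\sfC(\GrMod A)$ and in $\sfC(\Mod A)$, so graded quasi-isomorphisms remain quasi-isomorphisms after forgetting the grading.

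For the inequality $\pd M\le \grpd M$ the only nontrivial case is $\grpd M = n<\infty$. I would choose a graded projective resolution $P\to M$ with $P^m = 0$ for $m<-n$; the observation above immediately turns $P\to M$ into an ungraded projective resolution with the same vanishing range, so $\pd M\le n$.

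For the opposite inequality $\grpd M\le \pd M$, assume $\pd M = n<\infty$ and pick any graded projective resolution $P\to M$ in $\sfC(\GrMod A)$, which exists by the general theory of DG-resolutions recalled in Section \ref{Homological dimension}. The previous observation shows $P\to M$ is also an ungraded projective resolution, so by Theorem \ref{Avramov-Foxby:theorem 2.4}(1), the cokernel $K:=\Coker d_P^{-n-1}$ is projective as an ungraded $A$-module. Now $K$ carries a natural graded $A$-module structure inherited from $P$, so Proposition \ref{finitely graded projective dimension lemma} applied to $K$ gives that $K$ is graded projective. I would then define the truncation $\tilde P$ by
\[
\tilde P^m=\begin{cases}0 & m<-n,\\ K & m=-n,\\ P^m & m>-n,\end{cases}
\]
with the induced differentials. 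Since $\tuH^m(M)=0$ for $m<-n$, the evident map $\tilde P\to M$ is a quasi-isomorphism; as a bounded-above complex of graded projectives it is graded DG-projective, and $\tilde P^m=0$ for $m<-n$, whence $\grpd M\le n$. The infinite and $-\infty$ cases (the latter meaning $M=0$ in either derived category) are immediate.

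The only real obstacle is the standing fact that a bounded-above complex of (graded) projectives is (graded) DG-projective; this is entirely analogous to the ungraded statement and is handled by the same stupid-filtration argument, so no serious work is needed. Apart from that, everything reduces cleanly to the already-proved module case via the Avramov-Foxby cokernel criterion.
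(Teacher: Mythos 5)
There is a genuine gap. Your preliminary observation is stated for complexes of projectives that vanish in high cohomological degree (``bounded above''), and for these it is indeed true that forgetting the grading preserves DG-projectivity, because any bounded-above complex of (graded or ungraded) projectives is homotopically projective. But the resolutions to which you then apply this observation are of the opposite kind: $P$ with $P^m = 0$ for $m < -n$ vanishes in \emph{low} cohomological degree and, since $M$ is an arbitrary object of $\sfD(\GrMod A)$, there is no reason for $P$ to vanish in high degree as well. A complex of projectives vanishing only in low cohomological degree need not be homotopically projective: for instance, over $\Lambda=k[\epsilon]/(\epsilon^2)$ the complex $\Lambda\xrightarrow{\epsilon}\Lambda\xrightarrow{\epsilon}\cdots$ concentrated in degrees $\geq 0$ is quasi-isomorphic to $k$ but is not homotopically projective, as one sees by comparing $\Hom^{\bullet}(-,k)$ with $\RHom(k,k)$. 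The same problem recurs in the second inequality, where ``any graded projective resolution $P$'' and your truncation $\tilde P$ are likewise unbounded in high degree. In both directions you therefore silently invoke the assertion that the underlying ungraded complex $\sfU(P)$ of an arbitrary graded DG-projective complex is DG-projective --- and that assertion is precisely the nontrivial content of this lemma, not a formality dispatched by a ``stupid-filtration'' remark.

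The paper's proof supplies exactly this missing ingredient. It shows that a graded DG-projective complex is a direct summand of a complex having Keller's property (P) (a filtration whose subquotients are complexes of graded projectives with zero differential), notes that such a filtration persists after forgetting the grading and exhibits a homotopically projective ungraded complex, and concludes that $\sfU(P)$ is a direct summand of a homotopically projective complex and hence homotopically projective. Once that is in hand, your reduction to Proposition~\ref{finitely graded projective dimension lemma} via the Avramov--Foxby cokernel criterion of Theorem~\ref{Avramov-Foxby:theorem 2.4} is essentially what the paper does; in fact, in the second direction you may skip the construction of $\tilde P$ and quote part (1) of that theorem directly on $\Coker d_P^{-n-1}$. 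As written, your argument is complete only when $M$ has cohomology bounded above, which is strictly narrower than the hypothesis $M\in\sfD(\GrMod A)$.
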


\begin{proof}
We have only  to prove that 
if $P \in \sfC(\GrProj A)$ is DG-projective, 
then the underlying complex $\sfU(P)$ of (ungraded) $A$-modules  is DG-projective in $\sfC(\Mod  A)$. 
After that, thanks to Theorem \ref{Avramov-Foxby:theorem 2.4}, 
the rest of the proof is the same with that of the Proposition \ref{finitely graded projective dimension lemma}.

It is enough to  show that $\sfU(P)$ is homotopically projective. 
For this we recall the  notion of property (P) from \cite{Keller:ddc}.  
An object $P$ of $\sfC(\GrProj A)$ 
is said to have property (P) if it has an increasing  filtration $0 = P_{(0)} \subset P_{(1)} \subset \cdots  $ 
such that each graded quotient $P_{(i)}/P_{(i -1)}$ is an object of $\sfC(\GrProj A)$ with $0$ differential 
and that $\bigcup_{i \geq 0} P_{(i)} = P$. 
By \cite[3.1]{Keller:ddc}  an object $Q \in \sfC(\GrProj A)$ having property (P) is DG-projective in particular it is homotopically projective. 
We claim that every DG-projective complex $P$ is a direct summand of a complex $Q \in \sfC(\GrProj  A)$ 
having property (P).  
Indeed, by \cite[3.1]{Keller:ddc}, 
there exists a quasi-isomorphism $f: Q \stackrel{\sim}{\twoheadrightarrow} P$ 
from a complex $Q \in \sfC(\GrProj A)$ having property (P) 
such that each component $f^{n}: Q^{n} \to P^{n}$ is surjective. 
Then, since $P$ is cofibrant, $f$ has a section  by \cite[3.2]{Keller:ICM}. 

If $P \in \sfC(\GrProj A)$ has property (P), then so is $\sfU(P) \in \sfC(\Proj A)$.  
Hence it is homotopically projective.  
Let $P$ be a DG-projective object of $\sfC(\GrProj A)$. 
Then by the claim,  $\sfU(P)$ is a direct summand of a homotopically projective object of $\sfC(\Mod A)$ 
and hence is homotopically  projective. 
\end{proof}

The following is a complex version of Proposition \ref{finitely graded injective dimension lemma}.

\begin{lemma}\label{finitely graded injective dimension complex lemma}
Let $A$ be a finitely graded algebra 
and $M \in \sfD^{+}(\GrMod A)$ such that $M_{i} = 0$ for $ |i| \gg 0$. 
Then, we have 
\[
\grinjdim M = \injdim M.  
\]
\end{lemma}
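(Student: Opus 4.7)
The plan is to parallel the proof of Proposition \ref{finitely graded injective dimension lemma}, pushing the single-module argument through to bounded-below complexes. The whole argument rests on a single observation: if the terms of a graded injective resolution $J \xrightarrow{\sim} M$ are each finitely graded, then each $J^{m}$ decomposes by Lemma \ref{decomposition of graded injective modules} as a \emph{finite} direct sum $\bigoplus_{i}\frks_{i}J^{m}$ whose summands are ungraded injective by Lemma \ref{basic injective lemma}(2). Since $J$ is bounded below (as $M \in \sfD^{+}$), a bounded-below complex of ungraded injectives is DG-injective, so $J$ is simultaneously a DG-injective resolution of $M$ as an ungraded complex.

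Granting this, the inequality $\injdim M \leq \grinjdim M$ is immediate: take a graded injective resolution $J \xrightarrow{\sim} M$ bounded below with $J^{m}=0$ for $m > \grinjdim M$. Because the resolution has only finitely many cohomological degrees and $M$ has bounded grading support, each $J^{m}$ is finitely graded, so the observation applies. For the reverse inequality, set $n := \injdim M$, and choose a graded injective resolution $J \xrightarrow{\sim} M$ bounded below with each $J^{m}$ finitely graded (this is the main nontrivial construction, discussed below). By the observation $J$ is also ungraded DG-injective, so Theorem \ref{Avramov-Foxby:theorem 2.4}(2) applies and $\Ker d_{J}^{n}$ is an ungraded injective $A$-module. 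Since $\Ker d_{J}^{n}$ inherits bounded grading support, the argument in the proof of Proposition \ref{finitely graded injective dimension lemma} (invoking uniqueness of injective hulls together with the fact from \cite[I.3.3.13]{NV:Graded and Filtered} that graded and ungraded essential extensions agree) shows it is a graded injective module. Replacing $J^{n}$ by $\Ker d_{J}^{n}$ and killing higher terms produces a bounded complex of graded injectives quasi-isomorphic to $M$, so $\grinjdim M \leq n$.

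The one genuine obstacle is producing a graded injective resolution whose terms are all finitely graded. The key input is that the graded injective hull in $\GrMod A$ of a module supported in degrees $[c,d]$ has support in $[c-\ell,d]$: one decomposes the hull as $\bigoplus_{i}\frks_{i}(-)$ and uses that $\grHom_{\Lambda}(A,J)$ is supported in degrees $[-\ell,0]$ whenever $J$ is concentrated in degree $0$, so that $\frks_{i}(-) = \grHom_{\Lambda}(A,\frki_{i}(-))(-i)$ is supported in $[i-\ell,i]$. Iterating along a Cartan--Eilenberg-style construction (or building a minimal resolution term by term) starting from the uniformly bounded grading support of $M$ yields a bounded-below graded injective resolution whose terms have finite (possibly shifting) grading support. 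Once this is in place, the rest reduces to routine invocations of Lemma \ref{basic injective lemma}, Lemma \ref{decomposition of graded injective modules}, Proposition \ref{finitely graded injective dimension lemma}, and Theorem \ref{Avramov-Foxby:theorem 2.4}.
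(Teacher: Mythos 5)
Your proposal is correct and follows essentially the same route as the paper's terse proof: produce a graded injective resolution of $M$ whose terms are all finitely graded, observe that such terms are ungraded injective (via Lemma \ref{decomposition of graded injective modules} and Lemma \ref{basic injective lemma}(2)) so that the bounded-below resolution is simultaneously an ungraded DG-injective resolution, and then argue exactly as in Proposition \ref{finitely graded injective dimension lemma} using Theorem \ref{Avramov-Foxby:theorem 2.4}. You helpfully flesh out the construction of the finitely-graded resolution (the shift of grading support by at most $\ell$ at each step), which the paper leaves as an unproved assertion. One small inaccuracy of presentation: the finitely-graded resolution is also needed for the inequality $\injdim M \leq \grinjdim M$ --- an arbitrary graded injective resolution with $J^{m}=0$ for $m > \grinjdim M$ need not have finitely graded terms, so your phrase ``Because the resolution has only finitely many cohomological degrees\dots each $J^{m}$ is finitely graded'' is not a valid deduction on its own. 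Instead one should start from the specially constructed resolution, apply the graded version of Theorem \ref{Avramov-Foxby:theorem 2.4}(2) to truncate it at cohomological degree $\grinjdim M$, and note the truncation still has finitely graded terms; since you do supply the construction, this is just a matter of reordering.
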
 

\begin{proof}
Under the assumption   $M$ has an injective resolution $I$ 
such that each term $I^{n}$ is finitely graded. 
Using Theorem \ref{Avramov-Foxby:theorem 2.4}.(b), 
we can prove the assertion as in the same of Proposition \ref{finitely graded injective dimension lemma}. 
\end{proof}

\section{Projective dimension formula} 
\label{A criterion of perfectness}

In this Section \ref{A criterion of perfectness}, 
$\Lambda$ denotes an algebra, $C$ denotes a bimodule over it 
and $A = \Lambda \oplus C$ is the trivial extension algebra. 
We establish the projective dimension formula for  
an object $M \in \sfD^{\mrb}(\GrMod A)$ such that $M_{i} = 0$ for $i \neq 0,1$. 
The key tool is a  decomposition of 
a complex $P \in \sfC(\GrProj A)$ of graded projective $A$-modules  
according to degree of the generators  
which was introduced in \cite{Orlov}.

\subsection{Decomposition of complexes of graded  projective $A$-modules}\label{Decomposition of a graded projective complexes}

\subsubsection{Decomposition of graded projective $A$-modules}\label{Decomposition of graded projective $A$-module}


Recall that 
 $\frkp_{i} P = (P \otimes_{A} \Lambda)_{i}, \ \frkt_{i}P := (\frkp_{i}P ) \otimes_{\Lambda} A (-i)$ 
for $P \in \GrProj A$.   
Since now $A = \Lambda \oplus C$ is a trivial extension algebra,  
we have an isomorphism $\frkt_{i} P = (\frkp_{i} P)(-i) \oplus (\frkp_{i} P )\otimes_{\Lambda} C (-i-1)$ 
of $\Lambda$-modules. 
Therefore, by Lemma \ref{projective decomposition lemma}, we have the following isomorphisms. 

\begin{equation}\label{decomposition of graded projective modules}
\begin{split}
 P  & \cong \bigoplus_{ i\in \ZZ} \frkt_{i} P  \ (\textup{as graded $A$-modules}), \\
 P_{i} & \cong (\frkt_{i-1} P)_{i}  \oplus  (\frkt_{i} P)_{i} \cong (\frkp_{i-1} P \otimes_{\Lambda} C) \oplus \frkp_{i}P 
\ ( \textup{as $\Lambda$-modules}). 
\end{split} 
\end{equation}

We discuss  decomposition of a morphism $f: P \to P'$ of graded projective $A$-modules. 
 The case where 
 the generating spaces of  $P$ and $P'$ consist of single degree  
can be easily deduced from Corollary  \ref{projective Hom decomposition corollary}. 

\begin{lemma}\label{basics of projectives}
Let $P, P'$ be graded projective $A$-modules 
such that $P = \frkt_{i} P, \ P' = \frkt_{j} P'$ 
for some $i ,j  \in \ZZ$. 
Then, we have the following isomorphism  
\[
\Hom_{\GrMod A}(P,P') 
= \grHom_{A}(P,P')_{0}
\cong \begin{cases}
0 &   j \neq i, i -1, \\
 \Hom_{\Lambda}(P_{i}, P'_{i} ) &  j = i,  \\
 \Hom_{\Lambda}(P_{i}, P'_{i - 1} \otimes_{\Lambda} C) &  j = i -1.  
 \end{cases}
 \]
\end{lemma}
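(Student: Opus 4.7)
The plan is to deduce the statement directly from Corollary \ref{projective Hom decomposition corollary} combined with an explicit reading of the graded components of the modules of the form $\frkt_{j}P'$. First, since $P = \frkt_{i}P = \frkp_{i}P \otimes_{\Lambda} A(-i)$, the generating space $\frkp_{m}P$ vanishes for all $m \neq i$, while $\frkp_{i}P = P_{i}$. Hence the product appearing in Corollary \ref{projective Hom decomposition corollary} collapses to a single factor:
\[
\Hom_{\GrMod A}(P,P') \ \cong \ \prod_{m \in \ZZ} \Hom_{\Lambda}(\frkp_{m}P, P'_{m}) \ = \ \Hom_{\Lambda}(P_{i}, P'_{i}).
\]
The identification of $\Hom_{\GrMod A}(P,P')$ with $\grHom_{A}(P,P')_{0}$ is simply the definition of the degree-zero part of the graded Hom space, so the first equality of the lemma is immediate.

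Next I would substitute the graded components of $P'$ using the decomposition \eqref{decomposition of graded projective modules} applied to $P' = \frkt_{j}P' = \frkp_{j}P' \otimes_{\Lambda} A(-j)$. Since $A = \Lambda \oplus C$ has only two graded pieces, the module $\frkt_{j}P'$ is supported in degrees $j$ and $j+1$ with $P'_{j} = \frkp_{j}P'$ and $P'_{j+1} \cong \frkp_{j}P' \otimes_{\Lambda} C \cong P'_{j} \otimes_{\Lambda} C$. Consequently $P'_{i} = 0$ unless $i \in \{j, j+1\}$, i.e. unless $j \in \{i, i-1\}$, which gives the vanishing case of the lemma. In the remaining two cases, $j = i$ yields $P'_{i} = P'_{i}$ and thus $\Hom_{\Lambda}(P_{i}, P'_{i})$, while $j = i-1$ yields $P'_{i} \cong P'_{i-1} \otimes_{\Lambda} C$ and thus $\Hom_{\Lambda}(P_{i}, P'_{i-1} \otimes_{\Lambda} C)$, matching the stated formula.

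There is essentially no technical obstacle here; the argument is purely a bookkeeping exercise. The only thing that requires care is the shift convention: the twist $(-i)$ in the definition $\frkt_{i}P = \frkp_{i}P \otimes_{\Lambda} A(-i)$ places the generating summand $\frkp_{i}P$ in graded degree $i$ and the companion summand $\frkp_{i}P \otimes_{\Lambda} C$ in graded degree $i+1$. Once this shift is tracked correctly, the three cases of the statement correspond exactly to the three possible ways the supports $\{i, i+1\}$ and $\{j, j+1\}$ can overlap in the appropriate degree.
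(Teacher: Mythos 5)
Your proof is correct and is exactly what the paper has in mind: the paper simply states that the lemma "can be easily deduced from Corollary \ref{projective Hom decomposition corollary}" and leaves the details to the reader, and your argument supplies precisely those details — collapsing the product over $m$ to the single $m = i$ factor and then reading off the graded components of $\frkt_{j}P' = \frkp_{j}P'\otimes_{\Lambda}A(-j)$, which is supported in degrees $j$ and $j+1$ with the identifications $P'_{j}=\frkp_{j}P'$ and $P'_{j+1}\cong P'_{j}\otimes_{\Lambda}C$.
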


Let $f: P \to P' $ be a morphism in $\GrMod A$  with $P, P' \in \GrProj A$. 
We denote by $f_{ij}: \frkt_{j} P \to \frkt_{i} P'$ 
the component of $f$ with respect to the decompositions (\ref{decomposition of graded projective modules}) 
of $P$ and $P'$, that is, 
$f_{ij}$ is defined by the following composition
\[
f_{ij}: \frkt_{j} P  \xrightarrow{\ \ \mathsf{in}_{j} \ \ } P \xrightarrow{f}  P' \xrightarrow{\ \ \mathsf{pr}_{i} \ \ } \frkt_{i} P'
\]
where $\mathsf{in}_{j}$ is a canonical inclusion 
and $\mathsf{pr}_{i}$ is a canonical projection. 

Note that  if $j-i \neq 0,1$, then $f_{ij} = 0$ 
and that  $\frkp_{i}( f )= (f_{ii})_{i} : \frkp_{i} P = (\frkt_{i} P)_{i} \to (\frkt_{i} P' )_{i} = \frkp_{i} P'$. 


We set $\frkq_{i}(f)  := (f_{i-1,i})_{i} : \frkp_{i} P \to \frkp_{i-1} P' \otimes_{\Lambda} C$.
Then the degree $i$-part $f_{i} : P_{i}  \to P'_{i} $  is isomorphic to 
\[
\begin{pmatrix} 
\frkp_{i-1} (f )\otimes C & \frkq_{i}(f)  \\
0 & \frkp_{i}( f ) 
\end{pmatrix}
: (\frkp_{i-1} P \otimes_{\Lambda} C) \oplus \frkp_{i}P \to 
(\frkp_{i-1} P' \otimes_{\Lambda} C) \oplus \frkp_{i}P'
\] 
under the isomorphisms (\ref{decomposition of graded projective modules}),  
where we use the standard matrix  multiplication 
\[
\begin{pmatrix} 
\frkp_{i-1} (f )\otimes C & \frkq_{i}(f)  \\
0 & \frkp_{i}( f ) 
\end{pmatrix}
\begin{pmatrix}
p \\ q 
\end{pmatrix} 
= \begin{pmatrix}
(\frkp_{i-1}(f)  \otimes C)(p)  + \frkq_{i}(f) (q) \\  \frkp_{i}(f)(q) 
\end{pmatrix} 
\]
for $p \in \frkp_{i-1}P \otimes C, q \in \frkp_{i} P$. 

Let $i$ be an integer. 
We set $\frkt_{<i}P := \bigoplus_{ j< i} \frkt_{j} P$ and $\frkt_{\geq i} P := \bigoplus_{j \geq i} \frkt_{j} P$. 
For a graded $A$-module homomorphism 
$f: P \to P'$, 
we denote by 
$\frkt_{< i}(f) : \frkt_{< i} P \to \frkt_{< i} P' $ and 
$\frkt_{\geq i} (f) :\frkt_{\geq i} P \to \frkt_{\geq i} P'$ 
the induced morphisms. 
Then the canonical inclusion 
$\frkt_{<i} P \to P $ and 
the canonical projection $P \to\frkt_{\geq i} P$ 
fit into the following commutative diagram  
\[
\begin{xymatrix}{ 
0 \ar[r] &\frkt_{<i} P \ar[r]  \ar[d]_{\frkt_{<i } (f) }   & 
P \ar[d]^{f} \ar[r] 
& \frkt_{\geq i} P \ar[d]^{\frkt_{\geq i} (f) }  \ar[r] & 0\\  
0 \ar[r] &\frkt_{<i} P' \ar[r]  & P'  \ar[r] & \frkt_{\geq i}P' \ar[r] & 0 
}\end{xymatrix}
\]
where the both rows are split  exact.

\subsubsection{Decomposition of  complexes of graded projective modules}\label{A decomposition of  complexes of graded projective modules}

By abuse of notations, 
we denote the functors 
$\frkp_{i}: \sfC(\GrProj A ) \to \sfC(\Proj \Lambda)$ and 
$\frkp_{i}: \sfK(\GrProj A) \to \sfK(\Proj \Lambda)$ 
induced  from the functor $\frkp_{i}: \GrProj A \to \Proj \Lambda$ 
by the same symbols.

Let 
$P= ( \bigoplus_{n \in \ZZ} P^{n}, \{d^{n}\}_{n \in \ZZ} )  \in \sfC(\GrProj A)$ 
be a complex of graded projective $A$-modules. 
For simplicity, we set $\partial_{i}^{n} : = \frkp_{i}( d^{n}), \sfq_{i}^{n}:= \frkq_{i}(d^{n})$.  

Looking at graded degree $i$-part, we have the following  isomorphism 
\begin{equation}\label{projective complex isomorphism 1}
P^{n}_{i} = (\frkt_{i-1}P^{n})_{i} \oplus (\frkt_{i} P^{n})_{i} \cong 
(\frkp_{i-1}P^{n} \otimes_{\Lambda} C) \oplus  \frkp_{i} P^{n}
\end{equation}
of $\Lambda$-modules. 
Via this isomorphism, the differential $d^{n}:P^{n} \to P^{n+1}$ is decomposed into 
\[
\begin{pmatrix} 
\partial_{i-1}^{n} \otimes C &  \sfq_{i}^{n} \\ 
0 & \partial_{i}^{n} 
\end{pmatrix}
: (\frkp_{i-1}P^{n} \otimes_{\Lambda} C) \oplus  \frkp_{i} P^{n} \to 
(\frkp_{i-1}P^{n+1} \otimes_{\Lambda} C) \oplus  \frkp_{i} P^{n+1}.
\]
Now it is clear that the canonical morphisms $\frkp_{i-1} P \otimes_{\Lambda} C \to P_{i}, \ 
P_{i} \to \frkp_{i} P$ commute with the differentials.   
Hence we obtain an exact sequence in $\sfC(\Mod \Lambda)$
\[
0\to
\frkp_{i-1}P \otimes_{\Lambda} C \to P_{i} \to \frkp_{i} P \to 0 
\]
which splits when we forget the differentials. 
Moreover,  
the complex $P_{i}$ is obtained as the cone of 
the morphism $\sfq_{i}[-1]:  \frkp_{i} P [-1] \to  \frkp_{i-1} P \otimes C$ 
which is  given by the collection $\{\sfq_{i}^{ n }\}_{n\in \ZZ}$. 
This observation yields the following lemma.

\begin{lemma}\label{proj:description of cohomology}
We have an exact triangle in $\sfK(\Mod \Lambda)$ and hence in $\sfD(\Mod \Lambda)$   
\[
\frkp_{i-1}P \otimes_{\Lambda} C \to P_{i} \to \frkp_{i} P 
\xrightarrow{\sfq_{i}} \frkp_{i-1} P \otimes C[1].  
\]

In particular, $\tuH(P)_{i} = \tuH(P_{i}) =0$ if and only if the morphism 
$\sfq_{i}: \frkp_{i}P \to \frkp_{i-1} P \otimes_{\Lambda} C[1]$ is  a quasi-isomorphism. 
\end{lemma}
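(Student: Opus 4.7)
The plan is to read off the exact triangle directly from the upper-triangular matrix description of the differential on $P_i$ already set up in the paragraph preceding the lemma. The decomposition (\ref{projective complex isomorphism 1}) together with the matrix expression for $d^{n}$ identifies $P_i$, as an object of $\sfC(\Mod\Lambda)$, with the mapping cone of the morphism
\[
\sfq_i[-1]\colon \frkp_i P[-1]\longrightarrow \frkp_{i-1}P\otimes_{\Lambda}C
\]
assembled degree-wise from the family $\{\sfq_i^n\}_{n\in\ZZ}$ sitting in the off-diagonal slot. The only preliminary verification required is that this family is genuinely a chain map, i.e.\ that it commutes with the differentials $\partial^{\bullet}_i$ and $\partial^{\bullet}_{i-1}\otimes C$; this follows by unpacking $d^{n+1}\circ d^{n}=0$ entry-by-entry, whose off-diagonal entry records exactly the required compatibility $(\partial^{n+1}_{i-1}\otimes C)\circ \sfq_i^{n}+\sfq_i^{n+1}\circ \partial^{n}_i=0$.

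Once $P_i\cong \cone(\sfq_i[-1])$ is established in $\sfC(\Mod\Lambda)$, the standard distinguished triangle attached to a mapping cone gives
\[
\frkp_{i-1}P\otimes_{\Lambda}C\longrightarrow P_i \longrightarrow \frkp_i P \xrightarrow{\ \sfq_i\ } \frkp_{i-1}P\otimes_{\Lambda}C[1]
\]
in $\sfK(\Mod\Lambda)$, and composition with the projection $\sfK(\Mod\Lambda)\to \sfD(\Mod\Lambda)$ yields the same triangle in the derived category, as claimed.

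For the ``in particular'' clause, the functor $(-)_i\colon \GrMod A\to\Mod\Lambda$ of taking the degree-$i$ graded component is exact and commutes with cohomology, so $\tuH^{n}(P)_i=\tuH^{n}(P_i)$ for all $n$. Applying the long exact cohomology sequence to the triangle above then shows that $P_i$ is acyclic if and only if the connecting morphism $\sfq_i$ induces an isomorphism in every cohomological degree, that is, is a quasi-isomorphism.

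\textbf{Main obstacle.} There is no substantive obstacle: the argument is pure bookkeeping with the matrix form already at hand. The only point deserving care is the sign check verifying that $\{\sfq_i^n\}$ really is a chain map and that the shift conventions of the cone match the direction of the connecting morphism in the claimed exact triangle; both are routine but should be stated explicitly to make the reduction rigorous.
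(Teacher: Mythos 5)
Your proposal follows essentially the same route as the paper: the paper likewise reads off the split short exact sequence of complexes $0\to\frkp_{i-1}P\otimes_{\Lambda}C\to P_i\to\frkp_i P\to 0$ from the upper-triangular matrix form of $d^n$ and then identifies $P_i$ with the cone of $\sfq_i[-1]$, from which the distinguished triangle and the acyclicity criterion follow immediately. Your additional explicit check that $\{\sfq_i^n\}$ is a chain map (by reading the off-diagonal entry of $d^{n+1}d^n=0$) is correct and simply makes explicit what the paper leaves implicit.
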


For an integer $i$,  
we set
 $\frkt_{< i} P := ( \bigoplus_{n \in \ZZ} \frkt_{ < i} P^{n}, \{ \frkt_{< i} (d^{n})\}_{n \in \ZZ} ) $
and 
 $\frkt_{\geq i} P := ( \bigoplus_{n \in \ZZ} \frkt_{\geq i} P^{n}, \{\frkt_{\geq i} (d^{n})\}_{n \in \ZZ} )$. 
 Then the canonical inclusion 
$\frkt_{<i} P \to P $ and 
the canonical projection $P \to\frkt_{\geq i} P$ 
form an exact sequence of $\sfC(\grproj A)$
\[
0\to \frkt_{< i} P \to P \to \frkt_{\geq i} P \to 0, 
\]
which gives an exact triangle in $\sfK(\GrProj A)$
\begin{equation}\label{ad hoc ref 3/20}
 \frkt_{< i} P \to P \to \frkt_{\geq i} P  \to \frkt_{< i} P[1]. 
\end{equation}

We discuss  properties of the functor $\frkp_{i}: \sfK(\GrProj A) \to \sfK(\Proj \Lambda)$ a bit more. 

\begin{lemma}\label{projectively cofibrant lemma}

\begin{enumerate}[(1)]
\item If $P$ is DG-projective, then so is  $\frkp_{i} P$ for $i \in \ZZ$.

\item  
Assume that $\frkp_{i}P $ is DG-projective for $i \in \ZZ$ and that 
$\frkp_{i} P = 0$  for $i \ll 0$,  
then so is $P$. 
\end{enumerate}
\end{lemma}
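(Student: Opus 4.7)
The plan for (1) is to show that the functor $\frkp_i$ preserves property (P) and then invoke that every DG-projective complex in $\sfC(\GrProj A)$ is a direct summand of one having property (P), just as is used in the proof of Lemma \ref{finitely graded projective dimension complex lemma}. Concretely, if $Q \in \sfC(\GrProj A)$ carries a filtration $0 = Q_{(0)} \subset Q_{(1)} \subset \cdots$ with each $Q_{(k)}/Q_{(k-1)} \in \sfC(\GrProj A)$ of zero differential, then each short exact sequence $0 \to Q_{(k-1)} \to Q_{(k)} \to Q_{(k)}/Q_{(k-1)} \to 0$ is degree-wise split, so applying the additive, filtered-colimit-preserving functor $\frkp_i = (-\otimes_A\Lambda)_i$ produces a filtration of $\frkp_i Q$ whose successive quotients lie in $\sfC(\Proj \Lambda)$ by Lemma \ref{basic projective lemma}(1) and still carry zero differential. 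Hence $\frkp_i Q$ has property (P) and is DG-projective; since DG-projectivity is inherited by direct summands (complexes of projectives are closed under summands, and a summand of an acyclic complex is acyclic), this settles (1).

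For (2), I plan to realize $P$ as an increasing union of subcomplexes whose successive quotients are DG-projective. Choose $i_0$ with $\frkp_j P = 0$ for $j < i_0$ and set $F_k := \frkt_{<i_0+k} P$ for $k\geq 0$. The triangle (\ref{ad hoc ref 3/20}) together with the analysis of differentials carried out just before the lemma identifies $F_k \hookrightarrow F_{k+1}$ as a degree-wise split inclusion of subcomplexes with
\[
F_{k+1}/F_k \cong \frkp_{i_0+k}P \otimes_{\Lambda} A\bigl(-(i_0+k)\bigr)
\]
as complexes in $\sfC(\GrMod A)$, where $\frkp_{i_0+k}P$ carries its own differential $\frkp_{i_0+k}(d)$. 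Since every element of $P^n$ has finite graded support, $P = \bigcup_k F_k$ holds degree-wise.

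The key auxiliary fact will be that the functor $\mathsf{F}\colon \sfC(\Mod \Lambda) \to \sfC(\GrMod A)$ sending $M$ to $M \otimes_{\Lambda} A(-i)$ (with $M$ placed in degree $i$) carries DG-projective complexes to DG-projective ones. Its image lies in $\sfC(\GrProj A)$ by Lemma \ref{basic projective lemma}(2), and homotopy projectivity follows from the adjunction $\Hom_{\GrMod A}^{\bullet}(\mathsf{F}(M), N) \cong \Hom_{\Lambda}^{\bullet}(M, N_i)$, whose right adjoint $N \mapsto N_i$ is exact and so preserves acyclicity. Applied to the DG-projective $\frkp_{i_0+k} P$, this shows each $F_{k+1}/F_k$ is DG-projective in $\sfC(\GrMod A)$.

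The remaining, and I expect principal, step is the passage from the finite stages to $P$. For any acyclic $N \in \sfC(\GrMod A)$, the split sequences $0 \to F_k \to F_{k+1} \to F_{k+1}/F_k \to 0$ give short exact sequences of $\Hom^{\bullet}$-complexes whose third term is acyclic, so induction shows every $\Hom^{\bullet}(F_k, N)$ is acyclic with surjective transition maps $\Hom^{\bullet}(F_{k+1}, N) \twoheadrightarrow \Hom^{\bullet}(F_k, N)$. Since $P$ is the colimit of the $F_k$ in $\sfC(\GrMod A)$, one has $\Hom^{\bullet}(P, N) = \varprojlim \Hom^{\bullet}(F_k, N)$, and Mittag-Leffler (applicable thanks to the surjectivity of the transitions) makes this limit acyclic. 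Combined with $P \in \sfC(\GrProj A)$, this yields DG-projectivity of $P$. The delicate point to verify carefully is that $P$ really is the colimit of the $F_k$ inside $\sfC(\GrMod A)$; it follows from the finite graded support of each element of $P^n$, but it is the one place where one must be a little attentive.
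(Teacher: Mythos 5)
Your proof is correct, but your route for part (1) genuinely differs from the paper's. The paper proves (1) directly from Lemma~\ref{Hom complex lemma projective}: for $M \in \sfC(\Mod \Lambda)$ it forms the complex $I_i(M) \in \sfC(\GrMod A)$ concentrated in graded degree $i$, observes that $I_i(M)$ is acyclic when $M$ is, and reads off the isomorphism $\Hom_{\Lambda}^{\bullet}(\frkp_i P, M) \cong \Hom_{\GrMod A}^{\bullet}(P, I_i(M))$ from the product description of the Hom complex. Homotopy projectivity of $\frkp_i P$ then follows in one line from that of $P$. Your detour through property (P) and direct summands is valid (and self-contained given the claim established inside the proof of Lemma~\ref{finitely graded projective dimension complex lemma}), but the paper's argument is shorter and avoids invoking the DG-projective-to-property-(P) reduction at all; your version essentially re-proves that $\frkp_i$ plays well with the model-categorical structure rather than using the Hom-complex decomposition that this section has already built up.

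For part (2) the two arguments are, after unwinding, the same filtration: the paper directly equips $\Hom_{\GrMod A}^{\bullet}(P,M)$ with the decreasing filtration $F_j = \prod_{i \geq j}\Hom_{\Lambda}^{\#}(\frkp_i P, M_i)$ via the isomorphism $\Phi$ of Lemma~\ref{Hom complex lemma projective}, whereas you filter $P$ by $\frkt_{<i_0+k}P$ and then apply $\Hom^{\bullet}(-,N)$; the resulting filtration of the Hom complex is identical. Your presentation does have the virtue of making explicit the limit step (surjective transition maps, Mittag--Leffler, Milnor exact sequence) that the paper compresses into the phrase ``which is exhaustive by assumption.'' One small stylistic remark: when you justify $P=\bigcup_k F_k$ it is cleaner to appeal to the direct-sum decomposition $P^n \cong \bigoplus_j \frkt_j P^n$ from Lemma~\ref{projective decomposition lemma} (every element is a finite sum) rather than ``finite graded support,'' since an individual $\frkt_j P^n$ already occupies two graded degrees; the conclusion is the same either way.
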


We use the following description of $\Hom$-complex. 
 We denote by $\Hom^{\#}$ 
the underlying cohomologically graded $\kk$-modules of  $\Hom$-complexes 
$\Hom^{\bullet}$. 

Let $M \in \sfC(\GrMod A)$. The isomorphisms of Corollary \ref{projective Hom decomposition corollary} induces 
 an isomorphism of cohomologically graded $\kk$-modules. 
 \[ \Phi: \Hom_{\GrMod A}^{\#}(P, M) \xrightarrow{\cong} \prod_{i \in \ZZ} \Hom_{\Lambda}^{\#}(\frkp_{i}P, M_{i}). 
 \]
However, this is not  compatible with the canonical differentials.   
Via this isomorphism, the differential $\partial_{\Hom}$ of $\Hom_{\GrMod A}^{\bullet}(P,M)$ 
corresponds to the endomorphism $\delta$ of the right hand side 
which is defined in the following way.

Let $g = (g_{i})_{i \in \ZZ} $ be a homogeneous element of the above graded module of 
cohomological degree $n$. 
Then the $i$-th component $\delta(g)_{i}$ of $\delta(g)$ defined to be 
\[
\delta(g)_{i} := \partial_{\Hom}(g_{i}) - (-1)^{n} g_{i-1} \otimes C \circ \sfq_{i}
\]
where $\partial_{\Hom}$ is the differential of $\Hom_{\Lambda}^{\bullet}(\frkp_{i}P, M_{i})$. 
 Therefore, we have the following lemma. 
 
\begin{lemma}\label{Hom complex lemma projective}
$\Phi$  gives an isomorphism between the complexes 
\[\Hom_{\GrMod A}^{\bullet}(P, M) \xrightarrow{\cong} 
\left(\prod_{i \in \ZZ} \Hom_{\Lambda}^{\#}(\frkp_{i}P, M_{i}), \delta \right).\]
\end{lemma}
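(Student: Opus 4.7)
The plan is to check that $\Phi$ intertwines the two differentials; that it is a bijection on underlying cohomologically graded $\kk$-modules in each cohomological degree is an immediate consequence of Corollary \ref{projective Hom decomposition corollary} applied componentwise to the pieces $P^k$ and $M^{k+n}$. So the content of the lemma is the identity $\Phi(\partial_{\Hom}(f))_i = \delta(\Phi(f))_i$, which I would verify by a direct computation. Fix a homogeneous element $f$ of cohomological degree $n$ and expand
\[
\Phi(\partial_{\Hom}(f))_i = \mathsf{pr}_i \circ (d_M \circ f - (-1)^n f \circ d_P) \circ \mathsf{in}_i.
\]
Since $d_M$ preserves graded degree, $\mathsf{pr}_i \circ d_M = d_{M_i} \circ \mathsf{pr}_i$, so the first summand contributes $d_{M_i} \circ \Phi(f)_i$, i.e.\ the ``$d_M$-half'' of $\partial_{\Hom}(\Phi(f)_i)$.

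For the second summand, I would use the explicit decomposition of $d_P$ at graded degree $i$ established in the paragraph preceding Lemma \ref{proj:description of cohomology}: restricted to the summand $\mathsf{in}_i(\frkp_i P) \subset P_i$, the differential has a diagonal piece $\partial_i: \frkp_i P \to \frkp_i P$ and an off-diagonal piece $\sfq_i: \frkp_i P \to \frkp_{i-1} P \otimes_\Lambda C$ (of cohomological degree $+1$) landing in $(\frkt_{i-1} P)_i$. Applying $f$ to the $\partial_i$-component and then $\mathsf{pr}_i$ yields $\Phi(f)_i \circ \partial_i$; this combines with the $d_{M_i} \circ \Phi(f)_i$ contribution, with overall sign $-(-1)^n$, to form $\partial_{\Hom}(\Phi(f)_i)$. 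For the $\sfq_i$-component, the restriction of $f$ to $\frkt_{i-1} P$ is the graded $A$-linear extension of $\Phi(f)_{i-1}$, hence at graded degree $i$ it coincides with the composite $\frkp_{i-1} P \otimes_\Lambda C \xrightarrow{\Phi(f)_{i-1} \otimes C} M_{i-1} \otimes_\Lambda C \to M_i$, the last arrow being the degree-$1$ piece of the $A$-action on $M$. Precomposing with $\sfq_i$ and attaching the sign $-(-1)^n$ reproduces exactly the term $-(-1)^n (\Phi(f)_{i-1} \otimes C) \circ \sfq_i$ in $\delta(\Phi(f))_i$, as required.

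The main obstacle, if any, is purely notational: unwinding that the expression ``$g_{i-1} \otimes C \circ \sfq_i$'' in the statement of the lemma implicitly post-composes with the action map $M_{i-1} \otimes_\Lambda C \to M_i$ coming from the $A$-module structure on $M$, and tracking the Koszul sign given that $\sfq_i$ itself carries cohomological degree $+1$. Once these conventions are made explicit, no further input is needed: the verification reduces to matching components on each $\frkp_i P$, which is already prepared by the two decompositions (of $P_i$ and of $d_P|_{P_i}$) established earlier.
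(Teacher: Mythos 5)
Your proof is correct and follows exactly the approach the paper intends: the paper in fact gives no explicit proof (it defines $\delta$ and then simply writes ``Therefore, we have the following lemma''), and your computation is precisely the omitted verification, including the correct reading of the shorthand $g_{i-1}\otimes C\circ\sfq_i$ as the composite post-composed with the action map $M_{i-1}\otimes_\Lambda C\to M_i$ and the correct bookkeeping of the $-(-1)^n$ sign coming uniformly from the term $f\circ d_P$ once $d_P|_{P_i}$ is split into its $\partial_i$ and $\sfq_i$ components.
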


\begin{proof}[Proof of Lemma \ref{projectively cofibrant lemma}]
(1) 
For $i  \in \ZZ$ and $M \in \sfC(\Mod \Lambda)$, 
we denote by $I_{i}(M) \in \sfC(\GrMod A)$ the complex such that $I_{i}(M)_{i} = M $ and $I_{i}(M)_{j} = 0$ 
for $j \neq i$.  
If $M$ is acyclic, then so is $I_{i}(M)$. 
We have an isomorphism 
$\Hom_{\Lambda}^{\bullet}(P, M) \cong \Hom^{\bullet}_{\GrMod A}(\frkp_{i} P ,I_{i} (M))$  of $\Hom$-complexes. 
Therefore if $P$ is homotopically projective, then so is $\frkp_{i} P$. 

(2) 
Let $M \in \sfC(\GrMod A)$ be an acyclic complex.
By description of $\delta$,  
via $\Phi^{-1}$,  the family 
 $\{ F_{j} := \prod_{i \geq j} \Hom_{\Lambda}^{\#}(\frkp_{i}P, M_{i}) \mid j \in \ZZ \}$ 
 gives a decreasing filtration of the complex $\Hom^{\bullet}_{\GrMod A}(P, M)$, 
 which is exhaustive by assumption. 
 Since $F_{j}/F_{j+1} = \Hom_{\Lambda}^{\bullet}(\frkp_{j}P, M_{j})$ is acyclic  for $j \in \ZZ$, 
 so is $\Hom^{\bullet}_{\GrMod A}(P,M)$. 
\end{proof}

Let $P' \in \sfC(\GrProj A)$ be another complex. 
We denote by $\sfq'$ what $\sfq$ for $P'$. 
Then, via the isomorphism induced from $\Phi$
\[
\Hom_{\GrMod A}^{\#}(P,P') \xrightarrow{\cong} 
\prod_{ i \in \ZZ} 
\Hom_{\Lambda}^{\#}(\frkp_{i}P, \frkp_{i-1}P' \otimes C) \oplus 
\prod_{ i \in \ZZ} 
\Hom_{\Lambda}^{\#}(\frkp_{i}P, \frkp_{i }P' ) 
\]
the canonical differential  of the left hand side 
corresponds to $\begin{pmatrix} \partial_{\Hom} & F \\ 0 & \partial_{\Hom}\end{pmatrix}$ 
where two $\partial_{\Hom}$'s are the canonical differential of $\Hom$-complexes and 
$F$ is a morphism  of degree $1$ defined in the following way.

Let $g= (g_{i})_{i \in \ZZ}$ be a homogeneous element of $\prod_{i \in \ZZ}\Hom_{\Lambda}^{n}(\frkp_{i} P, \frkp_{i}P')$ 
of degree $n$. 
Then the $i$-th component $F(g)_{i}$ of $F(g)$ is defined to be 
\[
F(g)_{i} := \sfq'\circ g_{i} - (-1)^{n}(g_{i -1} \otimes C) \circ \sfq. 
\]
 
Therefore, canonical inclusion and canonical projection yield an exact sequence in $\sfC(\Mod \kk)$ 
which splits if we forget the differentials
\begin{equation}\label{Hom complex exact sequence projective}
0 \to \prod_{i \in \ZZ}\Hom_{\Lambda}^{\bullet}(\frkp_{i} P, \frkp_{i -1} P' \otimes_{\Lambda}C) 
\to \Hom_{\GrMod A}^{\bullet}(P, P') 
\xrightarrow{\  (\frkp_{i})} \prod_{i \in \ZZ} \Hom_{\Lambda}^{\bullet}(\frkp_{i}P, \frkp_{i}P')
\to 0.
\end{equation}
Moreover, 
the complex $\Hom_{\GrMod A}(P,P')$ is the cone of the morphism 
\[F: \prod_{i \in \ZZ} \Hom_{\Lambda}^{\bullet}(\frkp_{i} P, \frkp_{i} P')[-1] 
\to \prod_{i \in \ZZ} \Hom_{\Lambda}^{\bullet}(\frkp_{i}P, \frkp_{i-1}P' \otimes C).\]

\begin{lemma}\label{frkp lemma} 
\begin{enumerate}[(1)] 
\item A complex $P \in \sfC(\GrProj A)$ is null-homotopic 
if and only if  so is $\frkp_{i} P $ 
for $ i \in \ZZ$. 

\item 
A morphism $f: P \to P'$ is homotopy equivalence 
if and only if
so is $\frkp_{i}(f)$ 
for $i \in\ZZ$. 
\end{enumerate}
\end{lemma}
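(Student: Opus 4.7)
The forward implications in both (1) and (2) are immediate from functoriality: a chain contraction $h$ for $P$ with $dh + hd = \id_P$ is mapped by the additive functor $\frkp_i$ to a chain contraction $\frkp_i h$ for $\frkp_i P$, and similarly a homotopy witnessing $f \simeq g$ is transported to one witnessing $\frkp_i f \simeq \frkp_i g$.

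For the converse of (1), I would exploit the split short exact sequence (\ref{Hom complex exact sequence projective}) with $P' = P$:
\[
0 \to \prod_{i} \Hom_\Lambda^\bullet(\frkp_i P, \frkp_{i-1}P \otimes_\Lambda C) \to \Hom_{\GrMod A}^\bullet(P, P) \xrightarrow{(\frkp_i)_i} \prod_{i} \Hom_\Lambda^\bullet(\frkp_i P, \frkp_i P) \to 0,
\]
and pass to the associated long exact sequence in cohomology. Under the assumption that every $\frkp_i P$ is null-homotopic (equivalently, contractible), each Hom-complex $\Hom_\Lambda^\bullet(\frkp_i P, M)$ is contractible for arbitrary $M \in \sfC(\Mod \Lambda)$ (apply the contracting homotopy in the first variable); hence both outer products are acyclic, as products are exact in $\sfC(\Mod \kk)$. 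The long exact sequence then forces $H^\bullet(\Hom_{\GrMod A}^\bullet(P, P)) = 0$; in particular $[\id_P] = 0 \in H^0$, which means $P$ is null-homotopic.

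For (2), I would reduce to (1) via mapping cones. Since $\frkp_i = (- \otimes_A \Lambda)_i$ is additive and exact on each component, it commutes with the mapping cone construction, yielding a natural isomorphism $\frkp_i(\cone f) \cong \cone(\frkp_i f)$ in $\sfC(\Proj \Lambda)$. Because $f$ (respectively $\frkp_i f$) is a homotopy equivalence iff its cone is null-homotopic, part (2) follows by applying (1) to $\cone f \in \sfC(\GrProj A)$.

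The main subtle point is the acyclicity of the product complexes in (1): one has to note that contractibility of the factors $\frkp_i P$ (not merely acyclicity) is what propagates to contractibility of the Hom-complexes, so that the infinite product over $i \in \ZZ$ remains acyclic. Once this observation is in place, the rest of the argument is a straightforward application of the tools developed earlier in Section \ref{A criterion of perfectness}.
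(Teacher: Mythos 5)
Your proof is correct and follows essentially the same route as the paper's: the forward direction by functoriality of $\frkp_i$, the backward direction of (1) by taking $P'=P$ in the short exact sequence of Hom-complexes (\ref{Hom complex exact sequence projective}) and using the characterization of null-homotopy via acyclicity of $\Hom^\bullet(P,P)$, and (2) by reducing to (1) via mapping cones. The paper leaves the details of the outer products' acyclicity implicit, whereas you spell them out (including the observation that contractibility of the factors propagates through the product), but this is filling in the same argument rather than a different one.
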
 

\begin{proof}
Since the functor $\frkp_{i} : \sfK(\GrProj A) \to \sfK(\Proj \Lambda)$ 
is  triangulated, ``only if" part of (1) and (2) are clear.  

We prove ``if" part of (1). 
Recall that a complex $X \in \sfC(\Mod \Lambda)$ is null-homotopic 
if and only if $\Hom_{\Lambda}^{\bullet}(X, Y)$ is acyclic for $Y \in \sfC(\Mod \Lambda)$ 
if and only if $\Hom_{\Lambda}^{\bullet}(X,X)$ is acyclic 
and the same is true in graded situation. 
Now, it is clear that ``if" part of (1) 
follows from the exact sequence (\ref{Hom complex exact sequence projective}). 

We prove ``if" part of (2).  
By assumption  $\frkp_{i}(\cone f) \cong \cone( \frkp_{i} f) $ is null-homotopic. 
Hence, the cone $\cone(f)$ is null-homotopic by (1). 
\end{proof}

\subsubsection{Construction of  graded projective complexes and morphisms between them}\label{a description of graded projective complexes} 


Let $ (\{{Q_{i} \}}_{i \in \ZZ} , \{\sfq_{i} \}_{i \in \ZZ} )$  
be a pair  of 
a collection of objects $Q_{i} \in \sfC(\Proj \Lambda)$ and 
a collection of morphisms $\sfq_{i} : Q_{i} \to Q_{i-1} \otimes_{\Lambda} C[1]$ in $\sfC(\Mod \Lambda)$.

We set  
\[
P^{n} := \bigoplus_{ i\in \ZZ } Q_{i}^{n} \otimes_{\Lambda}A(-i), \ \ \ 
d_{P}^{n}:=
\begin{pmatrix} 
\partial^{n}_{i-1} \otimes C & \sfq_{i}^{n} \\
0 & \partial^{n}_{i} \end{pmatrix}
 \]
where $\partial^{n}_{i}$ denotes the $n$-th differential of $Q_{i}$. 
 Then the pair  $P = (\bigoplus_{n \in \ZZ} P^{n}, \{d_{P}^{n}\})$  is a complex  of graded projective $A$-modules 
 such that $\frkp_{i}P = Q_{i}$ and $\frkq_{i}(d_{P}) = \sfq_{i}$.


Let $P'$ be the object of $\sfC(\GrProj A)$ associated to 
another pair $ (\{  Q'_{i} \}_{i \in \ZZ} , \{\sfq'_{i} \}_{i \in \ZZ} )$. 

\begin{lemma}\label{hom construction lemma}
Let $\{ g_{i}: Q_{i} \to Q'_{i}\}$ be a collection of morphisms in $\sfC(\Proj \Lambda)$. 
Then the following conditions are equivalent.
\begin{enumerate}[(1)]
\item 
 There exists a morphism $f: P \to P'$ in $\sfC(\GrProj A)$ such that $\frkp_{i}(f) =g_{i}$ for $i \in \ZZ$ 

\item The morphisms $\sfq'_{i} \circ g_{i}$ and $(g_{i-1} \otimes C[1]) \circ \sfq_{i}$ are homotopic to each other. 
\end{enumerate}
\end{lemma}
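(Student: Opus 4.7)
The plan is to use the explicit realization of $\Hom^{\bullet}_{\GrMod A}(P, P')$ developed just before the lemma: by Lemma \ref{Hom complex lemma projective} together with the subsequent discussion, this complex is isomorphic to $\cone(F)$, where
$F(g)_i = \sfq'_i \circ g_i - (-1)^n (g_{i-1} \otimes_{\Lambda} C) \circ \sfq_i$
on a degree-$n$ element $g \in \prod_i \Hom^n_{\Lambda}(\frkp_i P, \frkp_i P')$. A morphism $f : P \to P'$ in $\sfC(\GrProj A)$ is by definition a $0$-cocycle of $\Hom^{\bullet}_{\GrMod A}(P, P')$, and, via the cone description, such a $0$-cocycle is the data of a pair $(g, h)$ with $g = (g_i) \in \prod_i \Hom^0_{\Lambda}(\frkp_i P, \frkp_i P')$ consisting of chain maps (the condition $d(g) = 0$) and $h = (h_i) \in \prod_i \Hom^0_{\Lambda}(\frkp_i P, \frkp_{i-1} P' \otimes_{\Lambda} C)$ satisfying $F(g) = -d(h)$. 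Moreover, by the short exact sequence \eqref{Hom complex exact sequence projective}, the $g$-component of $f$ recovers $(\frkp_i(f))_i$.

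With this dictionary, the equivalence is nearly formal. Since the $g_i$ are assumed to be chain maps by hypothesis, condition (1) amounts to the existence of some $h$ with $F(g) = -d(h)$, i.e., to $F(g)$ being a coboundary in $\prod_i \Hom^{\bullet}_{\Lambda}(\frkp_i P, \frkp_{i-1} P' \otimes_{\Lambda} C)$. Unwinding: a degree-$0$ map $h_i : Q_i \to Q'_{i-1} \otimes_{\Lambda} C$ is the same datum as a degree-$(-1)$ map into $Q'_{i-1} \otimes_{\Lambda} C[1]$, and the relation $F(g)_i = -d(h_i)$ becomes the usual chain-homotopy relation asserting that $\sfq'_i \circ g_i$ and $(g_{i-1} \otimes_{\Lambda} C[1]) \circ \sfq_i$ are homotopic as chain maps $Q_i \to Q'_{i-1} \otimes_{\Lambda} C[1]$, which is exactly (2). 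For the converse, the homotopies provided by (2) assemble componentwise into an $h$ with $F(g) = -d(h)$; then $(g,h)$ is a $0$-cocycle of $\cone(F)$ and hence corresponds to a morphism $f \in \Hom_{\sfC(\GrProj A)}(P, P')$ whose $\frkp_i$-parts are the prescribed $g_i$ by the surjection in \eqref{Hom complex exact sequence projective}.

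The main obstacle I expect is bookkeeping rather than mathematical substance: carefully matching the sign $(-1)^n$ and the shift $[-1]$ in the definition of $F$ with the shift $C[1]$ appearing in the target of $\sfq_i$, and with the standard chain-homotopy formula. Once these conventions are pinned down, the two conditions of the lemma are just two equivalent phrasings of the same $0$-cocycle condition in $\cone(F)$.
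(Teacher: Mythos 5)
Your proof is correct and follows essentially the same route as the paper: both arguments rest on the degree-wise split short exact sequence \eqref{Hom complex exact sequence projective} (equivalently, the cone description of $\Hom^{\bullet}_{\GrMod A}(P,P')$), reduce condition (1) to the existence of $h$ with $F(g)=\pm\partial_{\Hom}(h)$, and then identify this with the chain-homotopy statement in (2). The only cosmetic difference is that the paper phrases the reduction via the long exact cohomology sequence of \eqref{Hom complex exact sequence projective}, whereas you argue directly at the level of $0$-cocycles, which is in fact slightly more direct for obtaining an on-the-nose lift $f$ with $\frkp_i(f)=g_i$.
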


\begin{proof}
Taking the cohomology long exact sequence of 
 (\ref{Hom complex exact sequence projective}), 
we obtain the exact sequence of $\Hom$-spaces of cochain complexes
\[
\Hom_{\GrMod A}(P, P') 
\xrightarrow{\ \tuH((\frkp_{i}))} \prod_{i \in \ZZ} \Hom_{\Lambda}(Q_{i}, Q_{i}')
\xrightarrow{\tuH(F)} \prod_{i \in \ZZ}\Hom_{\Lambda}(Q_{i}, Q_{i-1}' \otimes_{\Lambda}C[1]).  
\]  
Therefore, 
the condition (1) is equivalent to the condition that 
there exists $h \in \prod_{i \in \ZZ}\Hom_{\Lambda}^{-1}(Q_{i}, Q_{i-1}\otimes C[1])$ 
such that $F(g) = \partial_{\Hom}(h)$ where $g := (g_{i})_{i \in \ZZ}$. 
It can be checked that 
the latter condition is equivalent to the condition (2).
\end{proof}

Combining Lemma \ref{frkp lemma} and Lemma \ref{hom construction lemma}, 
we obtain the following lemma and corollary.

\begin{lemma}\label{homotopic lemma}
Let $P \in \sfC(\GrProj A)$. 
Assume that 
a complex $Q'_{i} \in \sfC(\Proj \Lambda)$ 
homotopic to $\frkp_{i} P$ is given for all $i \in \ZZ$. 
Then, there exists  $P' \in \sfC(\GrProj A)$ homotopic to $P$ 
such that $\frkp_{i}P' = Q'_{i}$ in $\sfC(\Proj \Lambda)$ for $i \in \ZZ$.
\end{lemma}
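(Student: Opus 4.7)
The plan is to build $P'$ explicitly via the construction of Section 4.1.3 by producing the required pair of data $(\{Q'_i\}, \{\sfq'_i\})$, and then to use Lemma \ref{hom construction lemma} together with Lemma \ref{frkp lemma}(2) to produce a homotopy equivalence $f : P' \to P$.

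For each $i \in \ZZ$, I would fix chain maps $\alpha_i : Q'_i \to \frkp_i P$ and $\beta_i : \frkp_i P \to Q'_i$ realizing the assumed homotopy equivalence, together with homotopies $\alpha_i \beta_i \simeq \id_{\frkp_i P}$ and $\beta_i \alpha_i \simeq \id_{Q'_i}$. Define
\[
\sfq'_i := (\beta_{i-1} \otimes_{\Lambda} C[1]) \circ \sfq_i \circ \alpha_i : Q'_i \to Q'_{i-1} \otimes_{\Lambda} C[1]
\]
and feed the pair $(\{Q'_i\}, \{\sfq'_i\})$ into the construction of Section 4.1.3 to obtain $P' \in \sfC(\GrProj A)$ with $\frkp_i P' = Q'_i$ and $\frkq_i(d_{P'}) = \sfq'_i$, as required in the conclusion of the lemma.

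Next I would verify the compatibility condition of Lemma \ref{hom construction lemma}(2) for the family $\{\alpha_i\}$ considered as a candidate for $\{\frkp_i(f)\}$ of a morphism $f : P' \to P$: one needs $\sfq_i \circ \alpha_i \simeq (\alpha_{i-1} \otimes_{\Lambda} C[1]) \circ \sfq'_i$. By the definition of $\sfq'_i$ the right-hand side equals $((\alpha_{i-1} \beta_{i-1}) \otimes_{\Lambda} C[1]) \circ \sfq_i \circ \alpha_i$, and the chosen homotopy $h_{i-1} : \alpha_{i-1} \beta_{i-1} \simeq \id_{\frkp_{i-1} P}$, tensored with $C[1]$, yields the required homotopy (this is the only place tensoring with $C$ is used, and it is harmless since $\frkp_{i-1} P \in \sfC(\Proj \Lambda)$). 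Lemma \ref{hom construction lemma} then produces $f : P' \to P$ in $\sfC(\GrProj A)$ with $\frkp_i(f) = \alpha_i$ for all $i$.

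Finally, since each $\alpha_i$ is by construction a homotopy equivalence in $\sfC(\Proj \Lambda)$, Lemma \ref{frkp lemma}(2) immediately gives that $f$ is a homotopy equivalence in $\sfK(\GrProj A)$, so $P' \simeq P$. The only point requiring care, and the nearest thing to an obstacle, is the bookkeeping with the homotopies under $-\otimes_{\Lambda} C[1]$; but this causes no real trouble because the relevant $\frkp_{i-1} P$ are complexes of projective $\Lambda$-modules, so chain homotopies pass through the tensor product without issue.
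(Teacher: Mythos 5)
Your proof is correct and takes essentially the same route as the paper: you define $\sfq'_i$ by conjugating $\sfq_i$ with the given homotopy equivalences, invoke the construction of Section 4.1.3 and Lemma \ref{hom construction lemma} to build the morphism $f$, and conclude with Lemma \ref{frkp lemma}(2). The only cosmetic difference is that you build $f:P'\to P$ (using the $\alpha_i$) whereas the paper builds $f:P\to P'$ (using the $g_i$), and you spell out the homotopy verification in a bit more detail.
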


\begin{proof}
Let $g_{i}: \frkp_{i}P \to Q_{i}'$ be a homotopy equivalence and 
$h_{i}: Q'_{i} \to \frkp_{i}P$ be a homotopy inverse. 
We define $\sfq'_{i}$ to be  the composite morphism below 
\[
\sfq'_{i}: Q'_{i} \xrightarrow{h_{i}} P_{i} \xrightarrow{\sfq_{i}} P_{i-1}\otimes_{\Lambda} C[1] 
\xrightarrow{ g_{i} \otimes C[1] } Q'_{i-1} \otimes_{\Lambda}C[1].
\] 
Then 
the morphisms $\sfq'_{i} \circ g_{i}$ and $(g_{i-1} \otimes C[1]) \circ \sfq_{i}$ are homotopic to each other. 
We denote by $P'$ 
the complex  constructed from the collection $( \{Q'_{i}\}_{i \in \ZZ}, \{ \sfq'_{i} \}_{i\in \ZZ})$  
By Lemma \ref{hom construction lemma}, there exists a morphism $f: P \to P'$ in $\sfC(\GrProj A)$ 
such that $\frkp_{i}(f) = g_{i}$. 
By Lemma \ref{frkp lemma}, $f$ is a homotopy equivalence. 
\end{proof}

\begin{corollary}\label{homotopic corollary}
An object $P \in \sfK(\GrProj A)$ belongs to $\sfK^{\mrb}(\grproj A)$ 
if and only if 
$\frkp_{i} P$ belongs to $\sfK^{\mrb}(\proj \Lambda)$ for all $i \in \ZZ$ 
and $\frkp_{i} P = 0 $ in $\sfK(\Proj \Lambda)$ for $|i| \gg 0$. 
\end{corollary}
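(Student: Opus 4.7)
The plan is to prove the two implications separately, with the forward direction following from bookkeeping and the reverse direction being essentially a direct application of Lemma \ref{homotopic lemma}.

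For the forward direction, I would start by assuming (without loss of generality, after replacing $P$ by a homotopy equivalent representative) that $P$ is a genuine bounded complex of finitely generated graded projective $A$-modules. Each term $P^n$ is then a direct summand of a finite sum $\bigoplus_{k} A(-j_k)$, so $\frkp_i(P^n) = (P^n \otimes_A \Lambda)_i$ is zero for $i$ outside a finite set of indices, and this set can be taken uniformly in $n$ because only finitely many $n$ give nonzero $P^n$. By Lemma \ref{basic projective lemma}, each $\frkp_i(P^n)$ is finitely generated projective over $\Lambda$, so the complex $\frkp_i P$ lies in $\sfC^{\mrb}(\proj \Lambda)$, and it vanishes as a complex (hence in $\sfK(\Proj \Lambda)$) for $|i| \gg 0$.

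For the reverse direction, I would invoke Lemma \ref{homotopic lemma}. The assumption gives, for each $i \in \ZZ$, a complex $Q'_i \in \sfC^{\mrb}(\proj \Lambda)$ homotopy equivalent to $\frkp_i P$; for those indices $i$ with $|i|$ large, the hypothesis $\frkp_i P = 0$ in $\sfK(\Proj \Lambda)$ lets us take $Q'_i = 0$. Lemma \ref{homotopic lemma} then produces $P' \in \sfC(\GrProj A)$ homotopy equivalent to $P$ with $\frkp_i P' = Q'_i$ for every $i$. By the explicit construction recalled in Section \ref{a description of graded projective complexes}, the underlying terms of $P'$ are
\[
(P')^n = \bigoplus_{i \in \ZZ} (Q'_i)^n \otimes_\Lambda A(-i).
\]
Since only finitely many $Q'_i$ are nonzero and each is bounded in cohomological degree with finitely generated projective terms, this sum is finite for every $n$, is zero for $|n| \gg 0$, and consists of finitely generated graded projective $A$-modules. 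Hence $P' \in \sfC^{\mrb}(\grproj A)$, which gives $P \in \sfK^{\mrb}(\grproj A)$.

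The only nontrivial point is the reverse direction, and even there the work has essentially been absorbed into Lemmas \ref{frkp lemma} and \ref{homotopic lemma}; the main obstacle, such as it is, amounts to making sure the pair $(\{Q'_i\}, \{\sfq'_i\})$ supplied to the construction yields an object with only finitely many nonzero graded-projective summands in each cohomological degree, which is guaranteed by the vanishing $\frkp_i P = 0$ in $\sfK(\Proj \Lambda)$ for $|i| \gg 0$ together with the freedom of replacing null-homotopic $Q'_i$ by $0$.
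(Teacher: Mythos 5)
Your argument is correct and follows essentially the route the paper intends: the reverse direction is Lemma \ref{homotopic lemma} applied with $Q'_i$ chosen to be a bounded complex of finitely generated projective $\Lambda$-modules (and $Q'_i = 0$ when $\frkp_i P$ is null-homotopic), after which the explicit construction of Section \ref{a description of graded projective complexes} shows the resulting $P'$ lies in $\sfC^{\mrb}(\grproj A)$; the forward direction is the bookkeeping you describe. The paper leaves this as an immediate consequence of Lemmas \ref{frkp lemma} and \ref{hom construction lemma}, and you have simply spelled out the finiteness checks that make the deduction go through.
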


\subsection{Projective dimension formula}\label{Projective dimension formula}

Let $M$ be a graded $A$-module. 
Then the graded $A$-module structure induces 
a $\Lambda$-module homomorphism $\tilde{\xi}_{M, i} : M_{i} \otimes_{\Lambda} C \to M_{i +1}$ 
for $i \in \ZZ$, which we call the \textit{action morphism} of $M$. 

If a graded $A$-module $M$ satisfies $M_{< 0} = 0$, 
then $(M \otimes_{A} C)_{0} \cong M_{0} \otimes_{\Lambda} C$. 
Moreover, applying $M \otimes_{A} -$ to the canonical inclusion $\mathsf{can} : C \hookrightarrow A(1)$ 
and looking at the degree $0$-part, 
we obtain the $0$-th action morphism 
\[
\tilde{\xi}_{M,0}: M_{0} \otimes_{\Lambda} C \cong (M \otimes_{A} C)_{0} 
\xrightarrow{  \   (M \otimes \mathsf{can})_{0} \ } 
(M \otimes_{A} A (1))_{0} \cong M_{1}.
\]

We remark that since these constructions are natural, it works for a complex $M \in \sfC(\GrMod A)$. 

Let $M$ be an object of $\sfD(\GrMod A)$.
We denote a representative of $M$ by the same symbol $M \in \sfC(\GrMod A)$. 
We define a morphism $\Xi_{M}^{0}: M_{0} \lotimes_{\Lambda} C \to M_{1}$ in $\sfD(\Mod \Lambda)$ 
to be the composite morphism 
\[
\Xi^{0}_{M}: M_{0} \lotimes_{\Lambda} C \xrightarrow{ \ \mathsf{can} \  } M_{0} \otimes_{\Lambda} C 
\xrightarrow{\ \tilde{\xi}_{M, 0} \ } M_{1}
\]
where $\mathsf{can}$ is a canonical morphism. 
It is easy to see that this morphism is well-defined. 

For $a \geq 1$ and $L \in \sfD(\Mod \Lambda)$, we set 
\[
L \lotimes_{\Lambda} C^{a} :=( \cdots ( ( L \lotimes_{\Lambda} C) \lotimes_{\Lambda}C)  \cdots \lotimes_{\Lambda} C ) 
\ \ \ \ ( a \textup{-times}).  
\] 
We also set $L \lotimes_{\Lambda} C^{0} := L$.

\begin{remark}\label{bimodule remark projective}
The above notation is only the abbreviation of the iterated application of the endofunctor $- \lotimes_{\Lambda}C$ 
of $\sfD(\Mod \Lambda)$. 
We will not deal with the complex $C^{a} =C \lotimes_{\Lambda} C \lotimes_{\Lambda} \cdots \lotimes_{\Lambda}C$ 
of $\Lambda$-$\Lambda$-bimodules. 
If $\Lambda$ is a projective module over a base ring $\kk$, 
then the standard definition for $C^{a}$ works well. 
For example, we have $(M \lotimes_{\Lambda} C) \lotimes_{\Lambda} C = M \lotimes_{\Lambda} (C \lotimes_{\Lambda} C)$.  
However, in general,  the standard definition does not work well.  
To settle such a  situation we need to use differential graded algebras. 
See \cite[Remark 1.12]{Yekutieli:Dualizing complexes} for similar discussion.  
\end{remark}

The following is the main theorem of this section.
For simplicity we set   $\Xi_{M}^{a}:= \Xi_{M}^{0} \lotimes_{\Lambda} C^{a}$. 
\[
\Xi_{M}^{a}: M_{0} \lotimes_{\Lambda} C^{a + 1}  \to M_{1} \lotimes_{\Lambda} C^{a}.
\]

\begin{theorem}\label{projective dimension formula}
Let $M \in \sfD(\GrMod  A)$ such that $M_{i}= 0$ for $i \neq 0,1$. 
Then we have 
\[
\pd\limits_{A} M = 
\sup \{ \pd_{\Lambda} M_{0}, \  \pd\limits_{\Lambda} (\cone \Xi^{a}_{M}) +a \mid a \geq 0 \}.
\]
In particular, 
we have $\pd\limits_{A} M < \infty$ 
if and only if the following conditions are satisfied: 
\begin{enumerate}[(1)]
\item $\pd_{\Lambda} M_{0} < \infty$. 
\item  $\pd\limits_{\Lambda} (\cone \Xi^{a}_{M}) < \infty$ for $a\geq 0$. 
\item $\Xi^{a}_{M}$ is an isomorphism for $a \gg 0$. 
\end{enumerate}
\end{theorem}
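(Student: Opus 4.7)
The plan is to reduce to the graded setting via Lemma \ref{finitely graded projective dimension complex lemma}, which gives $\pd_A M = \grpd_A M$, and then to analyze a DG-projective resolution $P \in \sfC(\GrProj A)$ of $M$ chosen so that $\frkp_i P = 0$ for $i < 0$ (possible because $M_i = 0$ for $i < 0$). The decomposition machinery of Section \ref{Decomposition of a graded projective complexes} then reduces everything to the behavior of the component complexes $\frkp_i P \in \sfC(\Proj \Lambda)$.

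The first step I would carry out is to establish
\[
\grpd_A M = \sup_{i \in \ZZ} \pd_\Lambda \frkp_i P.
\]
The $(\geq)$ direction is immediate from Lemma \ref{projectively cofibrant lemma}(1): if $P^m = 0$ for $m < -n$, then $(\frkp_i P)^m = 0$ for $m < -n$ and $\frkp_i P$ is DG-projective, so $\pd_\Lambda \frkp_i P \leq n$. For $(\leq)$, Theorem \ref{Avramov-Foxby:theorem 2.4} lets me replace each $\frkp_i P$ by a homotopy-equivalent complex concentrated in degrees $[-n, 0]$, and Lemma \ref{homotopic lemma} then reassembles these truncated models into a homotopy-equivalent $P'$ with $(P')^m = 0$ for $m < -n$.

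The second step is to identify
\[
\frkp_0 P \simeq M_0, \qquad \frkp_{a+1} P \simeq \cone(\Xi_M^a)[a] \quad (a \geq 0),
\]
which I would prove inductively using the exact triangle of Lemma \ref{proj:description of cohomology}. At $i = 0$, $\frkp_{-1} P = 0$ and $P_0 \simeq M_0$ give $\frkp_0 P \simeq M_0$. At $i = 1$, rotating the triangle and using $P_1 \simeq M_1$ gives $\frkp_1 P \simeq \cone(M_0 \lotimes_\Lambda C \to M_1)$, where identifying the morphism with $\Xi_M^0$ --- through the splitting $P_1 = (\frkp_0 P \otimes_\Lambda C) \oplus \frkp_1 P$ and the canonical inclusion $C \hookrightarrow A(1)$ --- is the most delicate point. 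For $i \geq 2$, $P_i \simeq 0$ makes $\sfq_i$ an isomorphism, so $\frkp_i P \simeq \frkp_{i-1} P \otimes_\Lambda C[1]$, and the inductive step closes because $- \lotimes_\Lambda C$ commutes with cones and $\Xi_M^a = \Xi_M^{a-1} \lotimes_\Lambda C$.

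Combining the two steps with $\pd(X[a]) = \pd X + a$ from Lemma \ref{homological dimension lemma}(2) gives the formula. For the ``in particular'' assertion, the $(\Leftarrow)$ direction is immediate from the formula: (3) collapses the supremum to finitely many contributing terms, and (1) together with (2) bound them. The reverse extracts (1) and (2) directly from $\pd \cone \Xi_M^a + a \leq \pd_A M$; for (3), writing $\cone \Xi_M^a \simeq K \lotimes_\Lambda C^a$ with $K = \cone \Xi_M^0$ and using the shrinking bound $\pd(K \lotimes_\Lambda C^a) \leq \pd_A M - a$ together with the lower bound $\pd X \geq -\inf\{j : \tuH^j X \neq 0\}$, the cohomology of $K \lotimes_\Lambda C^a$ is forced into arbitrarily high cohomological degrees, which combined with an upper cohomological bound inherited from $M_0, M_1$ eventually produces $K \lotimes_\Lambda C^a = 0$. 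Making this forcing argument for (3) fully rigorous is the step I expect to require the most care.
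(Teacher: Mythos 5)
Your proof of the dimension formula itself is correct and follows the same route as the paper: reduce to the graded case via Lemma \ref{finitely graded projective dimension complex lemma}, compute $\grpd_A M$ as $\sup_i \pd_\Lambda \frkp_i P$ using Lemma \ref{projectively cofibrant lemma} together with Lemma \ref{homotopic lemma}, and then identify the component complexes $\frkp_i P$. Your step 2 is in effect a re-derivation of Lemma \ref{principal lemma}: where the paper applies $M \lotimes_A - \lotimes_\Lambda C^a$ to the short exact sequence $0 \to C(-1) \to A \to \Lambda \to 0$ and then reads off the internal degree~$1$ component, you instead induct on $i$ using the exact triangle $\frkp_{i-1}P \otimes_\Lambda C \to P_i \to \frkp_i P$ of Lemma \ref{proj:description of cohomology}, noting that $P_i \simeq 0$ for $i \geq 2$ forces $\sfq_i$ to be a quasi-isomorphism. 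Both routes produce $\frkp_i P \cong \cone \Xi_M^{i-1}[i-1]$, and the delicate identification you mention at $i = 1$ (matching the connecting map with $\Xi_M^0$ through the inclusion $C \hookrightarrow A(1)$) is exactly the content of the remark inside the proof of Lemma \ref{principal lemma}.

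The gap you flag in the argument for condition (3) is genuine. The formula gives $\pd_\Lambda(\cone \Xi_M^a) \le \pd_A M - a$, which by the lower bound $\pd X \ge -\inf\{j : \tuH^j X \ne 0\}$ forces the cohomology of $\cone \Xi_M^a$ into degrees $\ge a - \pd_A M$; to conclude $\cone \Xi_M^a = 0$ for $a \gg 0$ you then need a \emph{uniform upper} cohomological bound on $\cone \Xi_M^a = K \lotimes_\Lambda C^a$. Such a bound does hold if $M_0$ and $M_1$ (hence $K$) are cohomologically bounded above, because applying $- \lotimes_\Lambda C$ never raises the supremum of degrees in which cohomology lives; but the theorem is stated for arbitrary $M \in \sfD(\GrMod A)$ with $M_i = 0$ for $i \ne 0,1$, and no such boundedness is assumed. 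So the forcing argument as sketched is not complete at that level of generality. You should be aware that the paper's own proof establishes the formula $m = n$ and then states the ``in particular'' as a corollary without separately arguing (3), so you have put your finger on a loose end in the source rather than a flaw specific to your write-up; in every application the paper makes ($M$ a module, $M = A_A$, or $M$ a bounded complex of finitely generated modules) the boundedness is available and your argument closes exactly as you describe.
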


As a corollary, we obtain the following formula, from which the projective version of Theorem \ref{introduction theorem 1} follows. 

\begin{corollary}\label{projective dimension formula corollary}
For $M \in \sfD(\Mod  \Lambda)$, 
we have 
\[
\pd\limits_{A} M = \sup \{ \pd\limits_{\Lambda} ( M \lotimes_{\Lambda}C^{a} ) +a  \mid a \geq 0 \}.
\]
\end{corollary}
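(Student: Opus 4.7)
The plan is to view this corollary as a direct specialization of Theorem~\ref{projective dimension formula}. A $\Lambda$-module $M$, regarded as an $A$-module through the augmentation $\mathsf{aug}: A \to \Lambda$, carries a canonical grading with $M_0 = M$ and $M_i = 0$ for $i \neq 0$. This manifestly satisfies the hypothesis $M_i = 0$ for $i \neq 0,1$ of the theorem, and by Lemma~\ref{finitely graded projective dimension complex lemma} the ungraded projective dimension $\pd_A M$ coincides with the graded one, so Theorem~\ref{projective dimension formula} applies and computes exactly the quantity appearing on the left-hand side of the corollary.

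Next, I would observe that since $M_1 = 0$, the derived action morphism
\[
\Xi_M^a : M_0 \lotimes_\Lambda C^{a+1} = M \lotimes_\Lambda C^{a+1} \longrightarrow M_1 \lotimes_\Lambda C^a = 0
\]
is the zero map into the zero object. Consequently, the defining triangle $M \lotimes_\Lambda C^{a+1} \to 0 \to \cone \Xi_M^a \to (M \lotimes_\Lambda C^{a+1})[1]$ yields the identification
\[
\cone \Xi_M^a \;\cong\; (M \lotimes_\Lambda C^{a+1})[1] \quad \text{in } \sfD(\Mod \Lambda).
\]
Applying the shift formula in Lemma~\ref{homological dimension lemma}(2), I get $\pd_\Lambda(\cone \Xi_M^a) = \pd_\Lambda(M \lotimes_\Lambda C^{a+1}) + 1$.

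Finally, I substitute into Theorem~\ref{projective dimension formula}:
\[
\pd_A M = \sup\{\, \pd_\Lambda M,\; \pd_\Lambda(M \lotimes_\Lambda C^{a+1}) + 1 + a \mid a \geq 0 \,\}.
\]
Reindexing with $b = a+1$ and noting $M \lotimes_\Lambda C^0 = M$ (with the corresponding term $\pd_\Lambda M + 0$), the right-hand side collapses to
\[
\sup\{\, \pd_\Lambda(M \lotimes_\Lambda C^{b}) + b \mid b \geq 0 \,\},
\]
which is the desired formula.

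Since all the nontrivial homological work is already contained in Theorem~\ref{projective dimension formula}, this corollary is essentially a bookkeeping exercise; there is no genuine obstacle. The only minor point requiring care is the consistent use of the shift convention in Lemma~\ref{homological dimension lemma}(2) so that $\pd$ of the cone, which is a shift of $M \lotimes_\Lambda C^{a+1}$, matches up correctly under the reindexing.
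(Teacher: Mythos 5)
Your proof is correct and is exactly the intended deduction: specialize Theorem~\ref{projective dimension formula} to $M_1 = 0$, identify $\cone \Xi_M^a$ with $(M \lotimes_\Lambda C^{a+1})[1]$, apply the shift formula in Lemma~\ref{homological dimension lemma}(2), and reindex. The paper states the corollary without proof as an immediate consequence, but the same identification $\cone \Xi_M^{i-1}[i-1] \cong M \lotimes_\Lambda C^i[i]$ for modules concentrated in degree $0$ appears explicitly in the paper's proof of Theorem~\ref{a criterion of perfectness}, so your bookkeeping matches the authors' own reasoning.
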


Combining this corollary with Proposition \ref{global dimension lemma}, we see the next corollary.  

\begin{corollary}\label{projective dimension formula corollary 2}
We have the equation 
\[
\gldim A  = \sup \{ \pd_{\Lambda}M \lotimes_{\Lambda} C^{a} + a \mid  M \in \Mod \Lambda, \ a \geq 0 \}. 
\]
\end{corollary}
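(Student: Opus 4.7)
The plan is to combine Corollary \ref{projective dimension formula corollary} with Proposition \ref{global dimension lemma} in a direct way. Proposition \ref{global dimension lemma} tells us that
\[
\gldim A = \sup \{ \pd_{A} M \mid M \text{ an $A$-module with } MA_{\geq 1} = 0 \},
\]
and the condition $MA_{\geq 1}=0$ is exactly the statement that $M$ is a $\Lambda$-module pulled back to an $A$-module along the augmentation $A \to \Lambda$, equivalently, a graded $A$-module concentrated in degree $0$.

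First I would reduce to computing, for each $\Lambda$-module $M$ viewed in this way, the ungraded projective dimension $\pd_{A} M$. Since such an $M$ is finitely graded (concentrated in degree $0$), Lemma \ref{finitely graded projective dimension complex lemma} identifies it with the graded projective dimension, so Corollary \ref{projective dimension formula corollary} applies and yields
\[
\pd_{A} M = \sup \{ \pd_{\Lambda} (M \lotimes_{\Lambda} C^{a}) + a \mid a \geq 0 \}.
\]

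Next I would take the supremum of both sides over all $\Lambda$-modules $M$. On the left, by Proposition \ref{global dimension lemma}, we recover $\gldim A$. On the right, we obtain
\[
\sup_{M \in \Mod \Lambda} \sup_{a \geq 0} \bigl( \pd_{\Lambda}(M \lotimes_{\Lambda} C^{a}) + a \bigr) = \sup \{ \pd_{\Lambda}(M \lotimes_{\Lambda} C^{a}) + a \mid M \in \Mod \Lambda,\ a \geq 0 \},
\]
which is exactly the claimed expression.

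There is essentially no obstacle here; the statement is a direct packaging of the previous corollary together with the global dimension description from Proposition \ref{global dimension lemma}. The only minor subtlety to check is the implicit passage between graded and ungraded projective dimension for modules concentrated in degree $0$, which is precisely what Lemma \ref{finitely graded projective dimension complex lemma} (or Proposition \ref{finitely graded projective dimension lemma}) guarantees.
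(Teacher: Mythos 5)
Your proposal is correct and is essentially the same as the paper's proof, which simply combines Corollary \ref{projective dimension formula corollary} with Proposition \ref{global dimension lemma}. The extra care you take in invoking Lemma \ref{finitely graded projective dimension complex lemma} for the graded/ungraded passage is already built into the statement of Corollary \ref{projective dimension formula corollary}, so your argument matches the paper's reasoning.
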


We need preparations. 

\begin{lemma}\label{projective resolution lemma}\label{degree lemma}
Let $M$ be an object of  $\sfD(\Mod A)$ such that $M_{<0} = 0$ and 
$P\in \sfC(\GrProj A)$ a projective resolution of  $M$. 

Then the following assertions hold. 
\begin{enumerate}[(1)]

\item $\frkt_{< 0} P$ is null-homotopic and hence $\frkt_{\geq 0} P$ is homotopic to $P$ via the canonical morphism. 

\item 
The complex $\frkp_{0} P $ is a projective resolution of $M_{0}$ as complexes of $\Lambda$-modules.

\item 
For $N \in \sfD(\Mod \Lambda^{\op})$, 
 we have  
$(M \lotimes_{A} N )_{i} = 0$ for $i < 0$ and 
$(M \lotimes_{A} N )_{0} = M \lotimes_{\Lambda} N$. 

\item 
We have $\frkp_{i} P = (M \lotimes_{A} \Lambda)_{i}$ in $\sfD(\Mod \Lambda)$. 
\end{enumerate}
\end{lemma}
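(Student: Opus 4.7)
My plan is to establish (1) first, since (2)--(4) then follow by diagram chasing with the decompositions and triangles already set up in Section \ref{A criterion of perfectness}.

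For (1), since $M_{<0} = 0$, I would construct an auxiliary DG-projective resolution $P' \to M$ in $\sfC(\GrProj A)$ built from graded free modules $A(-j)$ with $j \geq 0$ only, so that $\frkp_i P' = 0$ for $i < 0$. Uniqueness of DG-projective resolutions up to homotopy equivalence gives $P \simeq P'$ in $\sfK(\GrProj A)$, and applying Lemma \ref{frkp lemma}.(2) then yields $\frkp_i P \simeq \frkp_i P' = 0$ in $\sfK(\Proj \Lambda)$ for each $i < 0$. Since $\frkp_i(\frkt_{<0} P) = \frkp_i P$ for $i < 0$ and vanishes otherwise, Lemma \ref{frkp lemma}.(1) makes $\frkt_{<0} P$ null-homotopic; the triangle (\ref{ad hoc ref 3/20}) at $i = 0$ then forces the canonical projection $P \to \frkt_{\geq 0} P$ to be a homotopy equivalence.

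For (2), $\frkp_0 P$ is DG-projective by Lemma \ref{projectively cofibrant lemma}.(1). Specializing Lemma \ref{proj:description of cohomology} at $i = 0$ gives a triangle
\[
\frkp_{-1} P \otimes_\Lambda C \to P_0 \to \frkp_0 P \to \frkp_{-1} P \otimes_\Lambda C[1],
\]
whose first term is null-homotopic by (1). Hence the projection $P_0 \to \frkp_0 P$ is a quasi-isomorphism, and combining it with the degree-$0$ component $P_0 \xrightarrow{\sim} M_0$ of $P \xrightarrow{\sim} M$ exhibits $\frkp_0 P$ as a projective resolution of $M_0$.

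For (3) and (4), decompose $P^n = \bigoplus_i \frkp_i P^n \otimes_\Lambda A(-i)$ via Lemma \ref{projective decomposition lemma}. For any left $\Lambda$-module $N$ regarded as a left $A$-module through the augmentation $A \twoheadrightarrow \Lambda$, the module $A(-i) \otimes_A N$ is a copy of $N$ placed in graded degree $i$, so $(P \otimes_A N)_i = \frkp_i P \otimes_\Lambda N$ as a complex of $\kk$-modules. For $i < 0$ this is null-homotopic by (1), giving the vanishing in (3); for $i = 0$, part (2) gives $\frkp_0 P \otimes_\Lambda N \simeq M_0 \lotimes_\Lambda N$, which is the second assertion of (3). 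Specializing $N = \Lambda$ and using $\frkp_i P \otimes_\Lambda \Lambda = \frkp_i P$ yields (4). The main obstacle is the opening step of (1), where one must simultaneously invoke existence of a projective resolution generated in non-negative degrees and uniqueness of DG-projective resolutions in the graded setting; once this is in hand, everything reduces to formal manipulations with the triangles and decompositions already available.
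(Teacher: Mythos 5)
Your proof of part (1) takes a genuinely different route from the paper, and that route currently has a gap. The paper never constructs an auxiliary resolution: it computes directly via Lemma~\ref{Hom complex lemma projective} that the $\Hom$-complex out of $\frkt_{<0}P$ into any complex $L$ with $L_{<0}=0$ is identically zero, deduces $\Hom_{\sfK(\GrMod A)}(\frkt_{<0}P,P)=\Hom_{\sfD(\GrMod A)}(\frkt_{<0}P,M)=0$ from the quasi-isomorphism $P\to M$, splits the triangle (\ref{ad hoc ref 3/20}) to exhibit $\frkt_{<0}P$ as a direct summand of $\frkt_{\geq 0}P[-1]$, and then kills that summand by a second application of the same vanishing. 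You instead posit a DG-projective resolution $P'\to M$ generated entirely in degrees $\geq 0$, compare with $P$ by uniqueness, and push the conclusion through Lemma~\ref{frkp lemma}. The downstream steps are sound, but the existence of such a $P'$ for an \emph{unbounded} $M$ is precisely the nontrivial content, and you do not prove it: one would have to run a property-(P) construction as in \cite{Keller:ddc} inside the abelian subcategory of graded $A$-modules supported in degrees $\geq 0$ (whose projectives are summands of sums of $A(-j)$, $j\geq 0$), and then check that a DG-projective complex in that subcategory remains DG-projective in $\sfC(\GrMod A)$. You flag this yourself as the main obstacle, which is honest, but the paper's argument shows the auxiliary resolution is avoidable and the whole of (1) reduces to a $\Hom$-complex vanishing plus a splitting of (\ref{ad hoc ref 3/20}).

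Your proofs of (2)--(4) are correct and are exactly what the paper leaves to the reader: once (1) gives $\frkp_iP\simeq 0$ in $\sfK(\Proj\Lambda)$ for $i<0$, (2) drops out of the triangle of Lemma~\ref{proj:description of cohomology} at $i=0$, and the key observation for (3)--(4) --- that $\frkt_jP^n\otimes_AN$ is $\frkp_jP^n\otimes_\Lambda N$ concentrated in graded degree $j$, so the $\sfq$-components of the differential die for degree reasons after tensoring and $(P\otimes_AN)_i\cong(\frkp_iP\otimes_\Lambda N,\ \partial_i\otimes N)$ --- is the right one. Specializing $N=\Lambda$ gives (4).
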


\begin{proof}We only prove (1). Proof of the other statements are left to reader. 
First note that for a complex $ L \in \sfC(\GrMod A)$ such that $L_{< 0} = 0$, 
we have $\Hom_{\sfD(\GrMod A)}(\frkt_{< 0} P, L ) = \Hom_{\sfK(\GrMod A)}(\frkt_{< 0} P, L ) = 0$ 
by Lemma \ref{Hom complex lemma projective}. 
Since $P$ is quasi-isomorphic to $M$, 
we have 
\[
\Hom_{\sfK(\GrMod A)}(\frkt_{< 0} P, P ) = \Hom_{\sfD(\GrMod A)}(\frkt_{< 0} P, P) = \Hom_{\sfD(\GrMod A)}(\frkt_{< 0} P, M) = 0.
\] 
Therefore, using the exact triangle (\ref{ad hoc ref 3/20})
we see that $\frkt_{< 0} P$ is a direct summand of $\frkt_{ \geq 0} P[-1]$ in $\sfK(\GrMod A)$ .
 Since $\Hom_{\sfK(\GrMod A)}(\frkt_{< 0} P, \frkt_{\geq 0} P[-1]) =0$, 
 we conclude that $\frkt_{ < 0} P = 0$  in $\sfK(\GrMod A)$. \end{proof}

The following lemma gives a relationship between $\frkp_{i} P$ and $\Xi_{M}^{a}$.

\begin{lemma}\label{principal lemma}
Let $M$ be an object of  $\sfD(\GrMod A)$ such that $M_{i} = 0$ for $i \neq 0,1$ and  
$P\in \sfC(\GrProj A)$ a projective resolution of  $M$. 
Then in $\sfD(\Mod \Lambda)$ we have 
\[
\frkp_{i}P \cong 
\begin{cases} 
\cone \Xi_{M}^{i-1}[i-1] & i  \geq 1,\\ 
M_{0}  & i= 0, \\
0 & i < 0.
\end{cases} 
\]
\end{lemma}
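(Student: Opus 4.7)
The strategy is to combine Lemma \ref{projective resolution lemma} (which controls the low-degree behavior of $P$) with the triangle of Lemma \ref{proj:description of cohomology} (which inductively expresses $\frkp_i P$ in terms of $\frkp_{i-1} P$ via $\sfq_i$), and to identify the ``bottom'' connecting map with $\Xi_M^0$. Since $M_{<0}=0$, I may invoke Lemma \ref{projective resolution lemma}(1) to replace $P$ by $\frkt_{\geq 0} P$ and thereby assume $\frkt_{<0} P = 0$; then $\frkp_i P = 0$ for $i<0$ and the decomposition \eqref{projective complex isomorphism 1} in degree $0$ collapses to $P_0 = \frkp_0 P$. Lemma \ref{projective resolution lemma}(2) identifies this complex as a projective resolution of $M_0$, so $\frkp_0 P \cong M_0$ in $\sfD(\Mod\Lambda)$; this settles all $i \le 0$.

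For the base case $i=1$, the exact triangle of Lemma \ref{proj:description of cohomology} reads
\[
\frkp_0 P \otimes_\Lambda C \to P_1 \to \frkp_1 P \xrightarrow{\sfq_1} \frkp_0 P \otimes_\Lambda C[1].
\]
Since $M_i = 0$ for $i \neq 0,1$ and $P$ resolves $M$, we have $P_1 \simeq M_1$ in $\sfD(\Mod\Lambda)$; since $\frkp_0 P$ is a projective resolution of $M_0$, we have $\frkp_0 P \otimes_\Lambda C \simeq M_0 \lotimes_\Lambda C$. The key point is to recognize the first arrow of the triangle as $\Xi_M^0$. Unwinding the decomposition \eqref{projective complex isomorphism 1}, this inclusion is, in each cohomological degree, the component of the right $A$-action $P \otimes_\kk C \to P$ restricted to $\frkp_0 P \otimes_\Lambda C \hookrightarrow P_1$. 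Passing to cohomology, it recovers the action map $\tilde{\xi}_{M,0}: M_0 \otimes_\Lambda C \to M_1$, and hence in the derived category it is precisely $\Xi_M^0$. The triangle then yields $\frkp_1 P \cong \cone(\Xi_M^0) = \cone \Xi_M^{0}[0]$, matching the formula.

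For $i \geq 2$, I proceed by induction. Since $M_i = 0$, $P_i$ is acyclic, and the triangle of Lemma \ref{proj:description of cohomology} forces $\sfq_i$ to be a quasi-isomorphism, giving
\[
\frkp_i P \cong (\frkp_{i-1} P)\lotimes_\Lambda C\,[1] \quad\text{in } \sfD(\Mod\Lambda).
\]
By the inductive hypothesis, $\frkp_{i-1} P \cong \cone(\Xi_M^{i-2})[i-2]$. Because the endofunctor $-\lotimes_\Lambda C$ on $\sfD(\Mod\Lambda)$ is triangulated (and commutes with shifts), and because by definition $\Xi_M^{i-1} = \Xi_M^{i-2}\lotimes_\Lambda C$, this gives
\[
\frkp_i P \cong \cone\bigl(\Xi_M^{i-2}\lotimes_\Lambda C\bigr)[i-1] = \cone(\Xi_M^{i-1})[i-1].
\]

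\textbf{Main obstacle.} The routine induction and triangulated-category bookkeeping in the third step are straightforward. The real work is in the base case: verifying that the connecting arrow $\frkp_0 P \otimes_\Lambda C \to P_1$ of Lemma \ref{proj:description of cohomology}, viewed in $\sfD(\Mod\Lambda)$ after the identifications $\frkp_0 P \otimes C \simeq M_0 \lotimes_\Lambda C$ and $P_1 \simeq M_1$, coincides with $\Xi_M^0$ on the nose (no sign, no auxiliary twist). This requires carefully tracing the decomposition \eqref{projective complex isomorphism 1} and comparing the restriction of the $A$-action on $P$ against the construction of $\Xi_M^0$ via the canonical inclusion $\mathsf{can}: C \hookrightarrow A(1)$; once this compatibility is in place, the rest of the argument is a mechanical application of the tools already assembled.
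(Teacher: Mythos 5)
Your proof is correct, but it runs along a different track than the paper's. You work resolution-side: for $i=1$ you invoke the triangle of Lemma~\ref{proj:description of cohomology}, identify its first arrow with $\Xi_M^0$ (as you note, via the fact that the chosen quasi-isomorphism $P\to M$ is a graded $A$-module map and hence intertwines the action maps $\tilde\xi_{P,0}$ and $\tilde\xi_{M,0}$), and then for $i\geq 2$ you induct using the observation that $\tuH(P_i)=0$ forces $\sfq_i$ to be a quasi-isomorphism, which together with the fact that $-\lotimes_\Lambda C$ is a triangulated endofunctor of $\sfD(\Mod\Lambda)$ pushes the base-case cone formula upward. The paper instead stays object-side: it uses Lemma~\ref{projective resolution lemma}(4) to rewrite $\frkp_i P$ as $(M\lotimes_A\Lambda)_i$, then applies $M\lotimes_A-\lotimes_\Lambda C^a$ to the canonical exact sequence $0\to C(-1)\to A\to\Lambda\to 0$ of graded $A$-modules to produce a triangle $M\lotimes_A C^{a+1}(-1)\to M\lotimes_\Lambda C^a\to M\lotimes_A C^a\to$, reads off graded components, and telescopes. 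The advantage of the paper's route is that the identification with $\Xi_M^a$ is built in: $\Xi_M^0$ is defined via the same inclusion $\mathsf{can}\colon C\hookrightarrow A(1)$, so the degree-$1$ part of the first arrow of the paper's triangle is $\Xi_M^a$ essentially by definition, with no need to trace a chosen splitting of $P$. Your route makes the role of the resolution-level decomposition and of Lemma~\ref{proj:description of cohomology} explicit, at the cost of the verification you flag as the main obstacle; both are legitimate and roughly equal in length once that verification is written out.
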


\begin{proof}
The case $i \leq 0$ is clear from Lemma \ref{projective resolution lemma}. 

Applying $M \lotimes_{A} - \lotimes_{\Lambda}C^{a}$ to the canonical exact sequence 
$ 0 \to C(-1) \to A \to \Lambda \to 0$, we obtain an exact triangle 
\[
M\lotimes_{A} C^{a+1}(-1) \to M \lotimes_{\Lambda}C^{a } \to  M \lotimes_{A} C^{a} \to. 
\]
We remark that the degree $1$-part of  the first morphism 
is identified with 
\[
\Xi_{M}^{a}  : M_{0} \lotimes_{\Lambda} C^{a + 1} \cong (M \lotimes_{A} C^{a +1}( -1) )_{1} \to 
( M \lotimes_{\Lambda} C^{a})_{1}    \cong M_{1} \lotimes_{\Lambda} C^{a}. 
\]
By looking degree $i\geq 1$ part, we obtain isomorphisms
\[
\begin{split}
(M \lotimes_{A} C^{a})_{1}  &\cong \cone[\Xi_{M}^{a} :  M_{0}  \lotimes_{\Lambda}C^{a+1} \to M_{1} \lotimes_{\Lambda} C^{a} ],  \\
(M \lotimes_{A} C^{a})_{i}  &\cong  (M \lotimes_{A} C^{a+1})_{ i -1}[1] \textup{ for } i \geq 2.
\end{split}\]
Therefore, for $i \geq 1$ we have 
\[
\begin{split} 
\frkp_{i} P &\cong  (M \lotimes_{A} \Lambda)_{i}  \cong (M \lotimes_{A} C)_{ i-1}[1] \cong  (M \lotimes_{A} C^{2})_{ i-2}[2] \\
&\cong \cdots  \cong (M \lotimes_{A} C^{i-1})_{ 1}[ i-1] =   \cone \Xi_{M}^{i-1}[i-1].
\end{split}
\]
\end{proof}

\begin{proof}[Proof of Theorem \ref{projective dimension formula}] 
For simplicity we set 
\[
m := \pd\limits_{A} M, \ \ 
n := \sup \{ \pd_{\Lambda} M_{0}, \  \pd\limits_{\Lambda} (\cone \Xi^{a}_{M}) +a \mid a \geq 0 \}.
\]

We prove $m \geq n$. We may assume that $m < \infty$. 
Let $P$ be a projective resolution of $M$ as a complex of graded $A$-modules 
such that $P^{< m} = 0$. 
It follows from  Lemma \ref{projectively cofibrant lemma} and Lemma \ref{principal lemma}, 
that $\frkp_{0} P $ is a projective resolution of $M_{0}$ and 
and that $\frkp_{i} P $ is a projective resolution of $\cone \Xi_{M}^{i-1}[i-1]$ for $i \geq 1$. 
Hence we prove the desired inequality. 

We prove $ m \leq n$. 
We may assume that $n < \infty$. 
Let $Q_{0}$ be a projective resolution of $M_{0}$ in $\sfC(\Mod \Lambda)$ such that $Q_{0}^{< n} =0$ 
and $Q_{i}$ a projective resolution of $\cone \Xi_{M}^{i-1}[i-1]$ in $\sfC(\Mod \Lambda)$ 
such that $Q_{i}^{< n} = 0$ for $i \geq 1$. 
It follows from Lemma \ref{projectively cofibrant lemma}, Lemma \ref{homotopic lemma} and Lemma \ref{principal lemma} 
that there exists a projective resolution $P \in \sfC(\GrMod A)$ of $M$ 
such that $\frkp_{i} P = Q_{i}$ for $i \geq 0$ and $\frkp_{i} P = 0$ for $i < 0$. 
Since $P^{< n} = 0$, we prove the desired inequality.
\end{proof}

\subsubsection{Finiteness of global dimension}

Using Corollary \ref{projective dimension formula corollary} we can deduce the following result. 

\begin{corollary}\label{criterion for finite global dimension}
Let $A = \Lambda \oplus C$ be a trivial extension algebra. 
Then the following conditions are equivalent. 
\begin{enumerate}[(1)]
\item $\gldim A < \infty$. 
\item $\gldim \Lambda < \infty$ and $C^{a} = 0$ for some  $a > 0$. 
\end{enumerate}
\end{corollary}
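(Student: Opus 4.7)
The plan is to derive both implications from the global dimension formula of Corollary \ref{projective dimension formula corollary 2}:
\[
\gldim A = \sup\{\pd_{\Lambda}(M \lotimes_{\Lambda} C^{a}) + a \mid M \in \Mod \Lambda, \ a \geq 0\}.
\]

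For the implication (2) $\Rightarrow$ (1), suppose $\gldim \Lambda = d < \infty$ and $C^{a_{0}} = 0$ in $\sfD(\Mod \Lambda)$ for some $a_{0} \geq 1$. For every $a \geq a_{0}$, one has $C^{a} \cong C^{a_{0}} \lotimes_{\Lambda} C^{a-a_{0}} = 0$, so the corresponding terms contribute $-\infty$ to the supremum. For $0 \leq a < a_{0}$, iterating Theorem \ref{Avramov-Foxby:inequality} yields $\pd_{\Lambda} C^{a} \leq a d$, whence $\pd_{\Lambda}(M \lotimes_{\Lambda} C^{a}) + a \leq (a+1)d + a$, and the supremum is bounded by $a_{0} d + a_{0} - 1$, giving $\gldim A < \infty$.

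For (1) $\Rightarrow$ (2), setting $a = 0$ in the formula immediately gives $\pd_{\Lambda} M \leq \gldim A$ for every $M \in \Mod \Lambda$, hence $\gldim \Lambda \leq \gldim A < \infty$. Setting $M = \Lambda$ yields $\pd_{\Lambda} C^{a} \leq \gldim A - a$ for all $a \geq 0$. The remaining task is to convert this into $C^{a} = 0$ for some $a$. Since $C$ is an honest module (concentrated in cohomological degree $0$), iterated derived tensor products have cohomology only in non-positive degrees, so $\tuH^{m}(C^{a}) = 0$ for $m > 0$. Once $a \geq \gldim A + 1$, we have $\pd_{\Lambda} C^{a} \leq -1$, so by Definition \ref{definition of homological dimension} there exists a projective resolution $P \to C^{a}$ with $P^{m} = 0$ for $m < 1$; this forces $\tuH^{m}(C^{a}) = 0$ for $m \leq 0$ as well. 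Combining the two vanishing ranges gives $C^{a} = 0$ in $\sfD(\Mod \Lambda)$.

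The main obstacle is this last step of (1) $\Rightarrow$ (2): converting an arbitrarily negative value of $\pd_{\Lambda} C^{a}$ into outright vanishing. The argument must combine two complementary cohomological bounds---one coming from the fact that $C$ is a module and the other from the precise content of a negative projective dimension in the sense of Avramov--Foxby---rather than naively invoking $\pd X = -\infty \Leftrightarrow X = 0$.
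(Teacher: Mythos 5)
Your proof is correct and follows the same route as the paper's, which uses Corollary \ref{projective dimension formula corollary} applied to $\Lambda_A$ together with Proposition \ref{global dimension lemma} (your appeal to Corollary \ref{projective dimension formula corollary 2} packages these two into one step, and your iterated use of Theorem \ref{Avramov-Foxby:inequality} for (2) $\Rightarrow$ (1) and the resulting bound $a(\gldim\Lambda+1)-1$ are exactly the paper's). Your last paragraph in (1) $\Rightarrow$ (2), combining $\tuH^{m}(C^{a})=0$ for $m>0$ with the vanishing in degrees $\leq 0$ forced by $\pd_{\Lambda}C^{a}\leq -1$, makes explicit a step the paper passes over in the phrase ``we see that $C^{a}=0$ for $a>\gldim A$''; this is a welcome clarification rather than a deviation.
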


\begin{proof}
(1) $\Rightarrow$ (2). 
Since $\gldim \Lambda \leq \gldim A$, we have $\gldim \Lambda < \infty$. 
Applying Corollary \ref{projective dimension formula corollary} to $\Lambda_{A}$, 
we see that $C^{a} = 0$ for $a > \gldim A$.

(2) $\Rightarrow$ (1). 
Let $M$ be a $\Lambda$-module. 
For $b \in \NN$ 
we have the following inequality by Theorem \ref{Avramov-Foxby:inequality},  
\[
\pd_{\Lambda}M \lotimes C^{b}  \leq \pd_{\Lambda} M + b \pd_{\Lambda} C.  
\]
Therefore, if $C^{a} = 0$, then we have 
\[
\pd_{A} M = \sup\{ \pd_{\Lambda}M \lotimes C^{b} +b  \mid 0 \leq b \leq a -1 \} \leq  a(\gldim \Lambda + 1) -1. 
\]
 Hence by Proposition \ref{global dimension lemma},  $\gldim A \leq  a(\gldim \Lambda + 1) -1 $. 
\end{proof}

We can reprove the characterization of the case $\gldim A \leq 1$ which was first proved by I. Reiten 
\cite[Proposition 2.3.3]{Reiten:Thesis} (see also \cite[Corollary 5.3]{Lofwall}). 
The details of proof is left to the readers.

\begin{corollary}
We have $\gldim A \leq 1$ 
if and only if the following conditions are satisfied: 
\begin{enumerate}[(1)]
\item $\gldim \Lambda \leq 1$. 
\item The left $\Lambda$-module ${}_{\Lambda}C$ is flat. 
\item $M \otimes_{\Lambda} C$ is projective for $M \in \Mod \Lambda$. 
\item $C \otimes_{\Lambda} C = 0$. 
\end{enumerate}
\end{corollary}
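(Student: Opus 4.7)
The plan is to apply Corollary \ref{projective dimension formula corollary 2}, which reads
\[
\gldim A = \sup\{\pd_{\Lambda}(M \lotimes_{\Lambda} C^{a}) + a \mid M \in \Mod \Lambda,\ a \geq 0\},
\]
and to read off each of the four conditions by specializing to small values of $a$.

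For the implication $\gldim A \leq 1 \Rightarrow (1)$--$(4)$, I would take $a=0$ first: this gives $\pd_{\Lambda} M \leq 1$ for every $M$, hence (1). Next, take $a=1$: this forces $\pd_{\Lambda}(M \lotimes_{\Lambda} C) \leq 0$ for every right $\Lambda$-module $M$. Since such a complex can have no negative cohomology, the groups $\Tor_{i}^{\Lambda}(M,{}_{\Lambda}C)$ vanish for all $i > 0$ and all $M$, which is precisely the flatness of ${}_{\Lambda}C$, giving (2). With ${}_{\Lambda}C$ flat, $M \lotimes_{\Lambda} C = M \otimes_{\Lambda} C$, and $\pd \leq 0$ says this ordinary module is projective, yielding (3). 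Finally, take $a=2$: $\pd_{\Lambda}(M \lotimes_{\Lambda} C^{2}) \leq -1$ means $M \lotimes_{\Lambda} C^{2} = 0$ in $\sfD(\Mod \Lambda)$, and specializing to $M = \Lambda$ together with flatness identifies $C^{2}$ with $C \otimes_{\Lambda} C$, so (4) follows.

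For the converse, assume (1)--(4); I would bound the supremum term by term. The $a=0$ contribution is at most $1$ by (1). For $a=1$, flatness of ${}_{\Lambda}C$ from (2) gives $M \lotimes_{\Lambda} C = M \otimes_{\Lambda} C$, and (3) makes this projective, so the contribution is at most $0 + 1 = 1$. For $a \geq 2$, flatness of ${}_{\Lambda}C$ lets us identify $M \lotimes_{\Lambda} C^{a}$ with the iterated ordinary tensor product $M \otimes_{\Lambda} C \otimes_{\Lambda} \cdots \otimes_{\Lambda} C$, which factors through $C \otimes_{\Lambda} C = 0$ by (4), hence vanishes and contributes $-\infty$. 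Taking the sup yields $\gldim A \leq 1$.

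The only subtle point is the derived-tensor-product notation $C^{a}$ flagged in Remark \ref{bimodule remark projective}, but once ${}_{\Lambda}C$ is known to be flat the derived tensor product agrees with the ordinary one and $C^{a}$ can be interpreted unambiguously as iterated ordinary tensor products. The rest of the argument is a direct bookkeeping exercise against the formula of Corollary \ref{projective dimension formula corollary 2}.
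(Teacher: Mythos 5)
Your proof is correct, and it follows exactly the route the paper intends: the paper states this corollary immediately after Corollary \ref{criterion for finite global dimension} with the remark that the details are left to the reader, and both are straightforward specializations of Corollary \ref{projective dimension formula corollary 2} (equivalently Corollary \ref{projective dimension formula corollary} plus Proposition \ref{global dimension lemma}). Your case analysis at $a=0,1,2$ for the forward direction, and the term-by-term bound with the vanishing for $a\geq 2$ in the converse, including the careful handling of when the derived and ordinary tensor products coincide, is exactly the intended bookkeeping.
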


\subsection{A criterion of perfectness of  complexes  of graded $A$-modules}



 Recall that  an object of $\sfK^{\mrb}(\proj \Lambda)$ (resp.  $\sfK^{\mrb}(\grproj A)$) 
is called  a perfect complex (resp. graded perfect complex).
Using the same method with the proof of Theorem \ref{projective dimension formula}, 
we can prove a criterion that an object  $ M \in \sfD(\Mod \Lambda)$ is a graded perfect complex.

\begin{theorem}\label{a criterion of perfectness}  
An object $M \in \sfD(\Mod \Lambda)$ belongs to $\sfK^{\mrb}(\grproj A)$ 
if and only if $M \lotimes_{\Lambda} C^{a}$ belongs to 
$\sfK^{\mrb}(\proj \Lambda)$ for $a \geq 0$ 
and $M \lotimes_{\Lambda} C^{a} = 0$ for $a \gg 0$. 
\end{theorem}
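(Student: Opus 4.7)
The plan is to reduce the statement to an application of Corollary \ref{homotopic corollary} combined with the computation of $\frkp_i P$ done in Lemma \ref{principal lemma}. View the $\Lambda$-module complex $M$ as an object of $\sfD(\GrMod A)$ concentrated in degree $0$ via the augmentation map $A \to \Lambda$, so that $M_0 = M$ and $M_i = 0$ for $i \neq 0$. Choose a projective resolution $P \in \sfC(\GrProj A)$ of $M$. By Lemma \ref{finitely graded projective dimension complex lemma} and the equivalence $\sfK_{\mathsf{proj}}(\GrProj A) \simeq \sfD(\GrMod A)$, the condition that $M$ lies in $\sfK^{\mrb}(\grproj A)$ is equivalent to the condition that $P$ does.

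By Corollary \ref{homotopic corollary}, $P \in \sfK^{\mrb}(\grproj A)$ if and only if $\frkp_i P \in \sfK^{\mrb}(\proj \Lambda)$ for every $i \in \ZZ$ and $\frkp_i P = 0$ in $\sfK(\Proj \Lambda)$ for $|i| \gg 0$. So the task reduces to identifying the complexes $\frkp_i P$ in the derived category of $\Lambda$. Since $M_1 = 0$, the derived action morphism
\[
\Xi_M^{a}: M_0 \lotimes_{\Lambda} C^{a+1} \to M_1 \lotimes_{\Lambda} C^{a}
\]
is the zero morphism, so $\cone \Xi_M^{a} \cong (M \lotimes_{\Lambda} C^{a+1})[1]$ in $\sfD(\Mod \Lambda)$. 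Lemma \ref{principal lemma} (which really only requires $M_i = 0$ for $i \neq 0, 1$, hence applies here) then gives the identifications
\[
\frkp_i P \cong
\begin{cases}
(M \lotimes_{\Lambda} C^{i})[i] & i \geq 1,\\
M & i = 0,\\
0 & i < 0,
\end{cases}
\]
in $\sfD(\Mod \Lambda)$.

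Since the shift functor preserves $\sfK^{\mrb}(\proj \Lambda)$ and the property of being the zero object, the condition ``$\frkp_i P \in \sfK^{\mrb}(\proj \Lambda)$ for all $i$ and $\frkp_i P = 0$ for $|i| \gg 0$'' translates exactly into ``$M \lotimes_{\Lambda} C^{a} \in \sfK^{\mrb}(\proj \Lambda)$ for every $a \geq 0$ and $M \lotimes_{\Lambda} C^{a} = 0$ for $a \gg 0$'' (the vanishing condition for negative $i$ is automatic).

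This essentially completes the proof; the only genuine content is already packaged in Corollary \ref{homotopic corollary} and Lemma \ref{principal lemma}. The only mildly delicate point is to make sure that when we apply Corollary \ref{homotopic corollary} we may use an \emph{arbitrary} DG-projective resolution (since that corollary is stated for a specific $P$), which follows from the invariance of $\frkp_i P$ up to homotopy equivalence established via Lemma \ref{frkp lemma} and Lemma \ref{homotopic lemma}. No new obstacle arises beyond carefully tracking this homotopy invariance.
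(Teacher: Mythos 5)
Your proof is correct and follows the paper's own argument: take a DG-projective resolution $P$ of $M$ viewed as a graded $A$-module concentrated in degree $0$, specialize Lemma~\ref{principal lemma} to $M_{1}=0$ to obtain $\frkp_{i}P \cong (M\lotimes_{\Lambda}C^{i})[i]$, and apply Corollary~\ref{homotopic corollary}. The only cosmetic difference is that you invoke Corollary~\ref{homotopic corollary} directly, while the paper phrases the step as ``the same as the proof of Theorem~\ref{projective dimension formula}, with Corollary~\ref{homotopic corollary} replacing Lemma~\ref{homotopic lemma}''; these are the same argument.
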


\begin{proof}
In the case where $M_{i} = 0$ for $i \neq 0$, 
Lemma \ref{principal lemma} become 
\[
\frkp_{i}P \cong 
\begin{cases} 
M \lotimes_{\Lambda} C^{i}[i] & i \geq 0,\\ 
0 & i < 0.
\end{cases} 
\]
Now the proof is the same with that of Theorem \ref{projective dimension formula} 
except that  we need to use  Corollary \ref{homotopic corollary} instead of  Lemma \ref{homotopic lemma}.
\end{proof}

\subsubsection{Applications}

Assume that $C_{\Lambda}$ belongs to $\sfK^{\mrb}(\proj \Lambda)$.  
Then the endofunctor $- \lotimes_{\Lambda} C^{a}$ of $\sfD(\Mod \Lambda)$ 
preserves $\sfK^{\mrb}(\proj \Lambda)$ for $a \geq 0$. 
We denote the restriction functor by $( -\lotimes_{\Lambda} C^{a})|_{\sfK}$.  

Observe that there exists the following increasing sequence of  thick subcategories of $\sfK^{\mrb}(\proj \Lambda)$. 
\[
\Ker(- \lotimes_{\Lambda}C)|_{\sfK} \subset 
\Ker(- \lotimes_{\Lambda}C^{2})|_{\sfK} \subset \cdots 
\subset 
\Ker(- \lotimes_{\Lambda}C^{a})|_{\sfK} \subset \cdots.  
\]

Applying Theorem \ref{a criterion of perfectness}, 
we obtain the following result.

\begin{corollary}\label{description of kernel} 
Under the above situation, we have 
\[
\sfD(\Mod \Lambda) \cap \sfK^{\mrb}(\grproj A) 
= \sfK^{\mrb}(\proj \Lambda) \cap \sfK^{\mrb}(\grproj A)
 = \bigcup_{a \geq 0} \Ker  (- \lotimes_{\Lambda}C^{a})|_{\sfK}.
\]
\end{corollary}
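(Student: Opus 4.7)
The plan is to deduce both equalities directly from Theorem \ref{a criterion of perfectness}, treating $\sfD(\Mod \Lambda)$ as the full subcategory of $\sfD(\GrMod A)$ consisting of objects $M$ with $M_i = 0$ for $i \neq 0$.

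For the first equality, I would argue as follows. The inclusion $\sfK^{\mrb}(\proj \Lambda) \cap \sfK^{\mrb}(\grproj A) \subseteq \sfD(\Mod \Lambda) \cap \sfK^{\mrb}(\grproj A)$ is immediate since every perfect $\Lambda$-complex lies in $\sfD(\Mod \Lambda)$. For the reverse inclusion, take $M \in \sfD(\Mod \Lambda) \cap \sfK^{\mrb}(\grproj A)$. Applying Theorem \ref{a criterion of perfectness} we obtain in particular that $M \lotimes_{\Lambda} C^{0} = M$ belongs to $\sfK^{\mrb}(\proj \Lambda)$, giving the desired inclusion.

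For the second equality, the key observation is that the hypothesis $C_{\Lambda} \in \sfK^{\mrb}(\proj \Lambda)$ makes the perfectness condition appearing in Theorem \ref{a criterion of perfectness} automatic for perfect $M$. Concretely, if $M \in \sfK^{\mrb}(\proj \Lambda)$, then by induction on $a$ the iterated derived tensor product $M \lotimes_{\Lambda} C^{a}$ lies in $\sfK^{\mrb}(\proj \Lambda)$ for every $a \geq 0$ (since tensoring a perfect complex with a perfect bimodule over $\Lambda$ yields a perfect complex). Thus Theorem \ref{a criterion of perfectness} reduces to the condition $M \lotimes_{\Lambda} C^{a} = 0$ for some sufficiently large $a$, which is precisely membership in $\bigcup_{a \geq 0} \Ker(- \lotimes_{\Lambda} C^{a})|_{\sfK}$. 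The ascending union is well-defined because the sequence of kernels is increasing: if $M \lotimes_{\Lambda} C^{a} = 0$ then $M \lotimes_{\Lambda} C^{a+1} \cong (M \lotimes_{\Lambda} C^{a}) \lotimes_{\Lambda} C = 0$.

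There is essentially no obstacle here; the statement is a direct corollary, and the only point requiring a sentence is the observation that the perfectness hypothesis on $C_{\Lambda}$ ensures the tensor-power perfectness automatically, so the only nontrivial condition that survives from Theorem \ref{a criterion of perfectness} is the eventual vanishing.
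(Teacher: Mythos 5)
Your proof is correct and is exactly the argument the paper intends: the corollary is stated with only the remark that it follows by ``applying Theorem~\ref{a criterion of perfectness},'' and what you write out is precisely the routine bookkeeping that remark elides (taking $a=0$ to get the first equality, and using that $C_{\Lambda}$ perfect makes the perfectness clause of Theorem~\ref{a criterion of perfectness} automatic for the second).
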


This result plays an important role in \cite{higehaji}, 
since in the case where $A$ is Noetherian, 
it gives a description of the kernel of the canonical functor 
\[
\varpi: \sfD^{\mrb}(\mod \Lambda) \to \sfD^{\mrb}(\grmod A) \to \grSing A = \sfD^{\mrb}(\grmod A) / \sfK^{\mrb}(\grproj A)
\]
where $\grSing A$ is the graded singular derived category of $A$.

As in the same way of Corollary \ref{criterion for finite global dimension}, 
we can prove the following Corollary by using Theorem \ref{a criterion of perfectness}.

\begin{corollary}
Assume that $A =\Lambda \oplus C$ is Noetherian. 
Then the following conditions are equivalent.  
\begin{enumerate}[(1)]
\item 
$\sfK^{\mrb}(\grproj A) = \sfD^{\mrb}(\grmod A)$.

\item 
$\sfK^{\mrb}(\proj \Lambda) = \sfD^{\mrb}(\mod \Lambda)$ 
and $C^{a} =  0$ for some $a > 0$. 
\end{enumerate}
\end{corollary}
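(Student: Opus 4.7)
The plan is to mimic the proof of Corollary \ref{criterion for finite global dimension}, replacing the global-dimension formula with the perfectness criterion of Theorem \ref{a criterion of perfectness}.

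For the direction (1) $\Rightarrow$ (2), I would first view $\Lambda$ as a graded $A$-module concentrated in degree $0$ via the augmentation. Since $A$ is Noetherian, $\Lambda$ is finitely generated and hence lies in $\sfD^{\mrb}(\grmod A)$, so (1) places it in $\sfK^{\mrb}(\grproj A)$. Applying Theorem \ref{a criterion of perfectness} to $M = \Lambda$ yields $C^a = \Lambda \lotimes_\Lambda C^a = 0$ for some $a > 0$. Next, for any finitely generated $\Lambda$-module $N$, regarded as a graded $A$-module concentrated in degree $0$, (1) places $N$ in $\sfK^{\mrb}(\grproj A)$, and Theorem \ref{a criterion of perfectness} with $a = 0$ gives $N \in \sfK^{\mrb}(\proj \Lambda)$. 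Since $\mod \Lambda$ generates $\sfD^{\mrb}(\mod \Lambda)$ as a thick subcategory, this yields the equality $\sfK^{\mrb}(\proj \Lambda) = \sfD^{\mrb}(\mod \Lambda)$.

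For (2) $\Rightarrow$ (1), the finitely graded structure $A = \Lambda \oplus C$ with $A_{\geq 2} = 0$ implies that any finitely generated graded $A$-module $M$ fits in a short exact sequence $0 \to MA_{\geq 1} \to M \to M/MA_{\geq 1} \to 0$ in which both end terms are annihilated by $A_{\geq 1}$, hence decompose as finite direct sums of degree-shifts of finitely generated $\Lambda$-modules concentrated in a single degree. Combined with the standard reduction to cohomologies, it suffices to show each such $N$ lies in $\sfK^{\mrb}(\grproj A)$. Theorem \ref{a criterion of perfectness} reduces this to (a) $N \lotimes_\Lambda C^a \in \sfK^{\mrb}(\proj \Lambda)$ for every $a \geq 0$, and (b) $N \lotimes_\Lambda C^a = 0$ for $a \gg 0$. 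Condition (b) is immediate from $C^a = 0$. For (a), since $C = A_{\geq 1}$ is a right ideal of the Noetherian algebra $A$, it is finitely generated as a right $\Lambda$-module, and (2) gives $C_\Lambda \in \sfK^{\mrb}(\proj \Lambda)$; hence the endofunctor $- \lotimes_\Lambda C$ on $\sfD(\Mod \Lambda)$ carries $\sfK^{\mrb}(\proj \Lambda)$ to itself (checked on $\Lambda$ and extended by thickness). Iterating delivers (a).

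The main obstacle, as in Corollary \ref{criterion for finite global dimension}, is the perfectness half of (2) $\Rightarrow$ (1): one must verify that the iterated derived tensor product $N \lotimes_\Lambda C^a$ remains in $\sfK^{\mrb}(\proj \Lambda)$ while respecting the subtleties around derived tensor products of bimodules pointed out in Remark \ref{bimodule remark projective}. These are bypassed by treating $C^a$ throughout as iterated application of the endofunctor $- \lotimes_\Lambda C$ on $\sfD(\Mod \Lambda)$, under which perfectness of $C_\Lambda$ suffices to preserve $\sfK^{\mrb}(\proj \Lambda)$.
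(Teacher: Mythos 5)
Your proposal is correct and follows essentially the same path as the paper: both directions rest on Theorem \ref{a criterion of perfectness}, with $(1)\Rightarrow(2)$ obtained by applying it to $\Lambda$ (to get $C^a=0$) and to degree-$0$ concentrated $\Lambda$-modules (to get perfectness over $\Lambda$), and $(2)\Rightarrow(1)$ obtained by showing $-\lotimes_\Lambda C$ is an endofunctor of $\sfK^{\mrb}(\proj\Lambda)=\sfD^{\mrb}(\mod\Lambda)$ and then reducing arbitrary graded modules to degree-shifted $\Lambda$-modules via the $MA_{\geq 1}$ filtration (the paper phrases this last step as ``built by extensions from $\{\sfD^{\mrb}(\mod\Lambda)(i)\}$''). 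The only cosmetic difference is that you spell out the filtration and the thick-subcategory closure explicitly where the paper is terser.
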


\begin{proof}
Assume that the condition (1) holds. 
Then since $\sfD^{\mrb}(\mod \Lambda) \subset \sfD^{\mrb}(\grmod A)$, 
the first condition of (2) follows from Theorem \ref{a criterion of perfectness}. 
Applying Theorem \ref{a criterion of perfectness} to $M = \Lambda$, 
we obtain $C^{a} = 0$ for $a \gg 0$.

We prove (2) $\Rightarrow$ (1). 
Since $C_{\Lambda}$ is finitely generated, 
the derived tensor product  $ ( - \lotimes_{\Lambda}C)|_{\sfK}$ 
has its value in $\sfD^{\mrb}(\mod \Lambda)$. 
It follows from the first condition  that   
the functor $- \lotimes_{\Lambda}C$ is an endofunctor of  $\sfD^{\mrb}(\mod \Lambda)$. 
Therefore,    
$\sfD^{\mrb}(\mod \Lambda)$ is contained in $\sfK^{\mrb}(\grproj A)$ by Theorem \ref{a criterion of perfectness}. 
Since the graded degree shift $(1)$ is an exact autofunctor of $\sfK^{\mrb}(\grproj A)$,  
$\sfD^{\mrb}(\mod \Lambda)(i)$ is also  contained in $\sfK^{\mrb}(\grproj A)$ for $ i \in \ZZ$.  
Since every object of $\sfD^{\mrb}(\grmod A)$ is constructed by extensions from objects of 
$\{ \sfD^{\mrb}(\mod \Lambda)(i) \mid i \in \ZZ \}$, 
it is contained in $\sfK^{\mrb}(\grproj A)$. 
\end{proof}

\begin{remark}\label{general theorem projective}
It seems likely that we can generalize  Theorem \ref{projective dimension formula} 
for an object $M \in \sfD(\GrMod A)$ such that $M_{i} = 0$ for $|i| \gg 0$ 
by using the derived action map. 

The graded $A$-module structure on $M$ induces 
the derived action morphism $\rho_{i}: M_{i} \lotimes_{\Lambda} C \to M_{i+1}$ 
for $i \in \ZZ$. 
Note that we have $\rho_{i+1} \circ (\rho_{i} \lotimes_{\Lambda} C) = 0$. 
Hence we obtain a complex $\cX_{a}$ of objects of $\sfD^{\mrb}(\Mod \Lambda)$ 
\[
\cX_{a}: 
M_{i_{0}}\lotimes_{\Lambda} C^{a} \xrightarrow{ \rho_{i_{0}} \lotimes C^{a-1} } M_{i_{0} +1} \lotimes C^{a-1} 
\xrightarrow{ \rho_{i_{0}-1} \lotimes C^{a-2}} M_{i_{0}+2} \lotimes_{\Lambda} C^{a-2} \to \cdots \xrightarrow{\rho_{a-1}} M_{a} 
\]
where we set  $i_{0}:= \min\{ i \mid M_{i} \neq 0\}$. 
Using  the totalization $\tot(\cX_{a})$ of the complex $\cX_{a}$, 
we may be able to  establish a projective dimension formula for $M$. 
However we should care about a subtlety of the totalization of a complex of objects of a triangulated category (see \cite{GM}).

We are content with Theorem \ref{a criterion of perfectness} which is sufficient for our applications. 
\end{remark}

\section{Injective dimension formula}\label{A criterion for finiteness of injective dimension}

In this Section \ref{A criterion for finiteness of injective dimension}, 
again, $\Lambda$ denotes an algebra, $C$ denotes a bimodule over it 
and $A = \Lambda \oplus C$ is the trivial extension algebra. 
We establish the injective  dimension formula for  
an object $M \in \sfD^{\mrb}(\GrMod A)$ such that $M_{i} = 0$ for $i \neq 0,1$. 
The key tool is a decomposition of a complex $I \in \sfC(\GrInj A)$ 
of graded injective $A$-modules 
according to degree of the cogenerators.  

Since the arguments are dual of that of Section \ref{A criterion of perfectness},  
we leave the details of proofs to the readers. 

In Section \ref{A criterion of Iwanaga-Gorensteinness}, we  discuss  a condition that $A$ is an Iwanaga-Gorenstein algebra.

\subsection{Decomposition of complexes of graded injective $A$-modules }\label{Decomposition of graded injective $A$-modules}

\subsubsection{Decomposition of graded injective $A$-modules}

Let $I$ be a graded injective $A$-module.
Recall that 
$\frki_{i}I := \grHom_{A}(\Lambda, I)_{i}, \ \ \frks_{i}I := \grHom_{\Lambda}(A,  \frki_{i}I)(-i)$.  
Since now $A= \Lambda \oplus C$  is a trivial extension algebra, 
we have an isomorphism $\frks_{i} I \cong \Hom_{\Lambda}(C, \frki_{i} I) ( -i +1) \oplus (\frki_{i} I )(-i)$ 
as graded $\Lambda$-modules.

By Lemma \ref{decomposition of graded injective modules}, 
we have the following isomorphisms
\begin{equation}\label{decomposition of graded injective modules 2}
\begin{split}
 I  &\cong \bigoplus_{ i\in \ZZ} \frks_{i} I \ (\textup{as graded $A$-modules}), \\
 I_{i} &\cong (\frks_{i} I)_{i}  \oplus  (\frks_{i+1} I)_{i} \cong  
\frki_{i} I  \oplus  \Hom_{\Lambda}(C, \frki_{i+1} I) \  \ \ (\textup{as $\Lambda$-modules}).
\end{split} 
\end{equation}

As in  Section \ref{Decomposition of graded projective $A$-module}, 
we may decompose the degree $i$-part $f_{i}$ of a morphism $f: I \to I'$ of graded injective modules under the isomorphism 
of (\ref{decomposition of graded injective modules 2}) 
\[
\begin{pmatrix} 
\frki_{i} (f)  & \frkj_{i}(f) \\
0 & \Hom_{\Lambda} ( C, \frki_{i +1}(f))
\end{pmatrix} 
: \frki_{i} I \oplus \Hom_{\Lambda} (C, \frki_{i +1}I ) 
\to 
 \frki_{i} I' \oplus \Hom_{\Lambda} (C, \frki_{i +1}I' ).  
\]

\subsubsection{Decomposition of complexes of graded injective $A$-modules }
As in Section \ref{A decomposition of  complexes of graded projective modules} 
and Section \ref{a description of graded projective complexes} 
we may construct from  a complex $I$ of graded injective $A$-modules 
a complex $\frki_{i} I$ of injective $\Lambda$-modules and  prove the following lemma. 

\begin{lemma}\label{injective lemma}
Let $I$ be a complex of graded injective $A$-modules. 
Then, 
the following assertions hold. 

\begin{enumerate}[(1)]
\item
We have an exact triangle  in $\sfK(\Mod \Lambda)$ (and in $\sfD(\Mod \Lambda)$) 
\[
\Hom_{\Lambda}(C, \frki_{i +1} I)[-1] \xrightarrow{\sfj_{i}} \frki_{i} I \to I_{i}  \to \Hom_{\Lambda}(C, \frki_{i +1} I) 
\]
where $\sfj_{i}$ is the morphism given by the collection $\{ \frkj_{i}(d_{I}^{n})\}_{n \in \ZZ}$. 
 
In particular, 
$\tuH(I)_{i} = 0$ if and only if the morphism $\sfj_{i}: \Hom_{\Lambda}(C, \frki_{i +1} I)[-1] \to \frki_{i} I $ 
is a quasi-isomorphism. 

\item 
Assume that a complex $J'_{i} \in \sfC(\Inj \Lambda)$ homotopic to $\frki_{i} I$ is given for $i \in \ZZ$. 
Then there exists  $I' \in \sfC(\GrInj A)$ homotopic to $I$ 
such that $\frki_{i} I' = J'_{i}$ in $\sfC(\Inj \Lambda)$ for $i \in \ZZ$. 
\end{enumerate}
\end{lemma}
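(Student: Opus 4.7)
The plan is to dualize the arguments from Section \ref{A criterion of perfectness}: part (1) is the injective counterpart of Lemma \ref{proj:description of cohomology}, and part (2) is the injective counterpart of Lemma \ref{homotopic lemma}.

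For part (1), I would combine the decomposition $(\ref{decomposition of graded injective modules 2})$ applied cohomological-degreewise with the upper triangular description of a morphism of graded injective $A$-modules recalled just before the statement of the lemma. This shows that for each $n \in \ZZ$ the differential $d^{n}_{I}$ restricted to degree $i$ is upper triangular with diagonal entries $\frki_{i}(d^{n}_{I})$ and $\Hom_{\Lambda}(C, \frki_{i+1}(d^{n}_{I}))$ and off-diagonal entry $\frkj_{i}(d^{n}_{I})$. Consequently the canonical inclusion $\frki_{i} I \hookrightarrow I_{i}$ and projection $I_{i} \twoheadrightarrow \Hom_{\Lambda}(C, \frki_{i+1} I)$ commute with the differentials and identify $I_{i}$ with the mapping cone of $\sfj_{i}[-1]$ in $\sfC(\Mod \Lambda)$. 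Rotating produces the exact triangle in the statement, and the cohomological criterion is then immediate since the cone is acyclic precisely when $\sfj_{i}$ is a quasi-isomorphism.

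For part (2), I would dualize the construction preceding Lemma \ref{homotopic lemma}. Given homotopy equivalences $g_{i}: \frki_{i} I \xrightarrow{\sim} J'_{i}$ with homotopy inverses $h_{i}$, set
\[
\sfj'_{i} := g_{i} \circ \sfj_{i} \circ \Hom_{\Lambda}(C, h_{i+1})[-1] : \Hom_{\Lambda}(C, J'_{i+1})[-1] \to J'_{i}.
\]
One then assembles the data $(\{J'_{i}\}_{i\in\ZZ}, \{\sfj'_{i}\}_{i\in\ZZ})$ into a complex $I' \in \sfC(\GrInj A)$ dual to the projective construction at the start of Section \ref{a description of graded projective complexes}, by setting $(I')^{n} := \bigoplus_{i\in\ZZ} \grHom_{\Lambda}(A, (J'_{i})^{n})(-i)$ (a direct sum that is finite in each graded degree, as in the proof of Lemma \ref{decomposition of graded injective modules}) and defining the differential via the upper triangular matrix built from the $\sfj'_{i}$ and the differentials of the $J'_{i}$. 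Then $\frki_{i} I' = J'_{i}$, and the injective analogues of Lemma \ref{hom construction lemma} and Lemma \ref{frkp lemma} applied to the collection $\{g_{i}\}$ produce a homotopy equivalence $f: I \to I'$ with $\frki_{i}(f) = g_{i}$.

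The main obstacle I expect is setting up the injective counterparts of the $\Hom$-complex bookkeeping in Section \ref{A decomposition of  complexes of graded projective modules}: duals of Corollary \ref{projective Hom decomposition corollary}, of the exact sequence $(\ref{Hom complex exact sequence projective})$, and of Lemmas \ref{projectively cofibrant lemma} and \ref{frkp lemma}. The local finiteness in graded degree of $\bigoplus_{i} \frks_{i} I$, observed in the proof of Lemma \ref{decomposition of graded injective modules}, is precisely what makes the dual filtration and homotopy arguments go through without any completion on the injective side; once these duals are in place, the remainder is a direction-of-arrow translation of the projective proofs.
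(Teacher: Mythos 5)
Your proposal is correct and follows the paper's intended approach: the paper explicitly leaves this lemma as a dualization of Section~\ref{A criterion of perfectness}, and you dualize the relevant decompositions, the $\Hom$-complex bookkeeping, and Lemmas \ref{frkp lemma}, \ref{hom construction lemma}, \ref{homotopic lemma}, correctly flagging the graded-degreewise finiteness of $\bigoplus_i \frks_i I$ as the point that makes the injective-side direct sums into products. One small slip: since the morphism $\sfj_i$ is already defined with domain $\Hom_{\Lambda}(C,\frki_{i+1}I)[-1]$, the complex $I_i$ is the mapping cone of $\sfj_i$ itself rather than of $\sfj_i[-1]$, so the triangle in the statement is the standard cone triangle and no rotation is needed.
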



\subsection{Injective dimension formula}\label{Injective dimension formula}

Let $M$ be a graded $A$-module. 
Then the graded $A$-module structure induces 
a $\Lambda$-module homomorphism $\tilde{\theta}_{M, i  } : M_{i}\to  \Hom_{\Lambda}(C , M_{i +1})$ 
for $i \in \ZZ$, which we call the \textit{coaction morphism} of $M$. 

If a graded $A$-module $M$ satisfies $M_{> 1} = 0$, 
then $ (\grHom_{A}(C, M))_{1} \cong \Hom_{\Lambda}(C, M_{1})$. 
Moreover, applying $\grHom_{A}(- ,M)$ to the canonical inclusion $\mathsf{can} : C \hookrightarrow A(1)$ 
and looking the degree $1$-part, 
we obtain the $1$-st coaction morphism 
\[
\tilde{\theta}_{M,0}: 
M_{0}  \cong  \grHom_{A}(A(1), M)_{1} 
\xrightarrow{  \   \grHom(\mathsf{can}, M)_{1} \ } 
\grHom_{A}(C, M )_{1} \cong \Hom_{\Lambda}(C, M_{1}). 
\]

We remark that since these constructions are natural, it works for a complex $M \in \sfC(\GrMod A)$. 

Let $M$ be an object of $\sfD(\GrMod A)$.
We denote a representative of $M$ by the same symbol $M \in \sfC(\GrMod A)$. 
We define a morphism $\Theta_{M}^{0}: M_{0} \to \RHom_{\Lambda}(C, M_{1})$ in $\sfD(\Mod \Lambda)$ 
to be the composite morphism 
\[
\Theta^{0}_{M}: 
M_{0} \xrightarrow{\ \tilde{\theta}_{M, 0}\  }  \Hom_{\Lambda}^{\bullet}(C, M_{1}) \xrightarrow{ \ \mathsf{can} \ } 
\RHom_{\Lambda}(C, M_{1}) 
\]
where $\mathsf{can}$ is a canonical morphism. 
It is easy to see that this morphism is well-defined. 

For $a \geq 1$ and $L \in \sfD(\Mod \Lambda)$, we set 
\[
\RHom_{\Lambda} (C^{a}, L) 
:= \RHom_{\Lambda}(C,  \cdots \RHom_{\Lambda}(C, \RHom_{\Lambda}(C, L)) \cdots ) 
\ \ \ \ ( a \textup{-times}).  
\] 
We also set $\RHom_{\Lambda}(C^{0}, L)  := L$.

\begin{remark}\label{bimodule remark injective}
The above notation is only the abbreviation of the iterated application of the functor $\RHom_{\Lambda}(C, -)$. 
We will not deal with the complex $C \lotimes_{\Lambda} C \lotimes_{\Lambda} \cdots \lotimes_{\Lambda}C$ 
of $\Lambda$-$\Lambda$-bimodules. 
\end{remark}

The following is the main theorem of this section.
For simplicity we set   $\Theta_{M}^{a}:= \RHom_{\Lambda}(C^{a}, \Theta_{M}^{0})$. 
\[
\Theta_{M}^{a} : \RHom_{\Lambda}(C^{a}, M_{0}) \to \RHom_{\Lambda}(C^{a+1}, M_{1})
\]

\begin{theorem}\label{injective dimension formula}
Let $M$ be a graded $A$-module such that $M_{i} = 0$ for $i \neq 0,1$. 
Then, 
\[
\grinjdim_{A} M = \sup\{\injdim_{\Lambda}  M_{1}, \ \injdim_{\Lambda} ( \cone \Theta_{M}^{a}) + a+ 1 \mid a \geq 0\}.
\]
In particular, 
 we have $\grinjdim_{A} M < \infty$ 
if and only if the following conditions are satisfied: 
\begin{enumerate}[(1)]
\item  $\injdim_{\Lambda} M_{1} < \infty $. 

\item
$\injdim \cone(\Theta_{M}^{a}) < \infty $ for  $a \geq 0$.

\item $\Theta_{M}^{a}$ is an isomorphism for $a \gg 0$.
\end{enumerate}
\end{theorem}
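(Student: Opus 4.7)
The strategy mirrors the proof of Theorem \ref{projective dimension formula} with projective data replaced by injective data throughout. Starting from an injective resolution $I \in \sfC(\GrInj A)$ of $M$ and the decomposition $I = \bigoplus_i \frks_i I$ from Section \ref{Decomposition of graded injective $A$-modules}, the plan is to reduce $\grinjdim_A M$ to the injective dimensions of the pieces $\frki_i I \in \sfC(\Inj \Lambda)$, and then identify each $\frki_i I$ with an appropriate shift of $\cone \Theta_M^a$ to read off the formula.

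The first main step is the injective analog of Lemma \ref{principal lemma}: for an injective resolution $I$ of $M$,
\[
\frki_i I \,\cong\, \begin{cases} 0 & i > 1, \\ M_1 & i = 1, \\ (\cone \Theta_M^a)[-(a+1)] & i = -a,\ a \geq 0, \end{cases}
\]
in $\sfD(\Mod \Lambda)$. The key input is the exact triangle obtained by applying $\grRHom_A(-, M)$ to the canonical exact sequence $0 \to C(-1) \to A \to \Lambda \to 0$ of graded right $A$-modules and extracting graded degree $i$,
\[
\frki_i I \to M_i \to \RHom_\Lambda(C, \frki_{i+1} I) \to \frki_i I[1],
\]
where the identification $\grRHom_A(C(-1), M)_i \cong \RHom_\Lambda(C, \frki_{i+1} I)$ follows from $C \cong C \otimes_\Lambda \Lambda$ as graded right $A$-modules (with $\Lambda$ viewed via the augmentation) and the $\otimes$-$\Hom$ adjunction. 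Once the vanishing $\frki_i I = 0$ for $i > 1$ is in hand (see below), descending induction on $i$ through these triangles gives first $\frki_1 I \cong M_1$, then $\frki_0 I \cong (\cone \Theta_M^0)[-1]$ upon identifying the connecting map $M_0 \to \RHom_\Lambda(C, M_1)$ with the derived coaction $\Theta_M^0$, and finally $\frki_{-a} I \cong (\cone \Theta_M^a)[-(a+1)]$ for $a \geq 0$ by iteratively applying $\RHom_\Lambda(C, -)$ and using $\RHom_\Lambda(C, \cone \Theta_M^{a-1}) \cong \cone \Theta_M^a$.

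The formula then follows from the equality $\grinjdim_A M = \sup_i \injdim_\Lambda \frki_i I$, established dually to the projective argument. When $I^{>n} = 0$, Lemma \ref{decomposition of graded injective modules} forces $\frki_i I^{>n} = 0$ for every $i$; the injective analog of Lemma \ref{projectively cofibrant lemma}(1) (provable via the adjunction $\Hom_\Lambda(N, \frki_i J) \cong \Hom_{\GrMod A}(N(-i), J)$) gives DG-injectivity of each $\frki_i I$, and Theorem \ref{Avramov-Foxby:theorem 2.4}(2) then yields $\injdim_\Lambda \frki_i I \leq n$. Conversely, taking DG-injective representatives $J_i \xrightarrow{\sim} \frki_i I$ with $J_i^{>n} = 0$, Lemma \ref{injective lemma}(2) produces $I' \in \sfC(\GrInj A)$ homotopic to $I$ with $\frki_i I' = J_i$, whence $I'^{>n} = \bigoplus_i \frks_i I'^{>n} = 0$ and therefore $\grinjdim_A M \leq n$.

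The principal obstacle is the dualization of Lemma \ref{projective resolution lemma}(1), i.e., showing that $\frks_{>1} I = 0$ in $\sfK(\GrMod A)$. The plan is to exploit the injective Hom-complex decomposition dual to Lemma \ref{Hom complex lemma projective}, whose underlying graded-module version is Corollary \ref{injective Hom decomposition corollary}: since $\frki_i(\frks_{>1} I) = 0$ for $i \leq 1$, any $L \in \sfC(\GrMod A)$ with $L_i = 0$ for $i > 1$ satisfies $\Hom_{\sfK(\GrMod A)}(L, \frks_{>1} I) = 0$. Applied with $L = M$, together with DG-injectivity of $\frks_{>1} I$ (as a summand of $I$) giving $\Hom_{\sfK} = \Hom_{\sfD}$, this forces the projection $I \to \frks_{>1} I$ in the triangle $\frks_{\leq 1} I \to I \to \frks_{>1} I \to \frks_{\leq 1} I[1]$ to vanish, exhibiting $\frks_{>1} I$ as a direct summand of $\frks_{\leq 1} I[1]$ via a retraction $\alpha$. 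A second application of the Hom-vanishing to $\Hom_{\sfK(\GrMod A)}(\frks_{\leq 1} I[1], \frks_{>1} I)$, valid because $(\frks_{\leq 1} I[1])_i = 0$ for $i > 1$, kills $\alpha$ and hence $\frks_{>1} I$ itself.
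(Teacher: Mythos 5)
Your proposal follows precisely the route the paper takes: dualize the projective argument through the decomposition $\frks_i I$, obtain the key triangle $\frki_i I \to M_i \to \RHom_\Lambda(C, \frki_{i+1}I) \to \frki_i I[1]$ by applying $\grRHom_A(-,M)$ to $0 \to C(-1) \to A \to \Lambda \to 0$, show $\frks_{>1}I \simeq 0$, and then read off $\grinjdim_A M = \sup_i \injdim_\Lambda \frki_i I$. This is exactly the content of Lemma \ref{degree lemma injective} and Lemma \ref{A key observation for the criterion}, which the paper declares to be dual to Lemma \ref{degree lemma} and Lemma \ref{principal lemma} and then uses just as in the proof of Theorem \ref{projective dimension formula}.

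The one point that needs repair is your justification that $\frks_{>1}I$ is DG-injective, namely that it is ``a summand of $I$.'' The decomposition $I^n \cong \bigoplus_i \frks_i I^n$ is only a termwise splitting: because the differential of $I$ has nonzero components $\frks_j \to \frks_{j-1}$, the piece $\frks_{\leq 1}I$ is a subcomplex and $\frks_{>1}I$ is only the quotient complex, not a direct summand of $I$ in $\sfC(\GrMod A)$ or $\sfK(\GrMod A)$. (This mirrors the projective side, where $\frkt_{<i}P$ is a subcomplex of $P$ rather than a summand, and the paper's proof of Lemma \ref{projective resolution lemma}(1) quietly needs $\frkt_{<0}P$ to be DG-projective for the same reason you need $\frks_{>1}I$ to be DG-injective.) The correct argument is the dual of Lemma \ref{projectively cofibrant lemma}(2): $\frks_{\leq 1}I$ satisfies $\frki_i(\frks_{\leq 1}I)=\frki_i I$ for $i\leq 1$ and $=0$ for $i>1$, so the relevant filtration of the injective Hom-complex is exhaustive; each $\frki_i(\frks_{\leq 1}I)$ is DG-injective by your adjunction observation, and one concludes $\frks_{\leq 1}I$ is DG-injective. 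Then $\frks_{>1}I=\cone(\frks_{\leq 1}I\to I)$ is DG-injective because homotopically injective complexes are closed under cones. With this substitution your two Hom-vanishing applications and the retraction argument go through unchanged, and the remainder of the proof is correct.
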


As a corollary, we obtain the following formula, from which Theorem \ref{introduction theorem 1} follows. 

\begin{corollary}\label{injective dimension formula corollary}
Let $M \in \sfD^{+}( \Mod \Lambda)$. 
Then, 
\[
\injdim_{A} M = \sup \{ \injdim_{\Lambda}  \RHom_{\Lambda}(C^{a}, M)  + a \mid a \geq 0 \}.
\]
\end{corollary}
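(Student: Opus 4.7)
The plan is to derive Corollary \ref{injective dimension formula corollary} as a direct specialization of Theorem \ref{injective dimension formula} to the case where the graded $A$-module is concentrated in a single degree. Concretely, I would view the object $M \in \sfD^{+}(\Mod \Lambda)$ as an object of $\sfD^{+}(\GrMod A)$ via restriction of scalars along the augmentation $\mathsf{aug}: A \to \Lambda$, $\mathsf{aug}(r,c) = r$. This places $M$ in graded degree $0$, i.e.\ $M_0 = M$ and $M_i = 0$ for $i \neq 0$, so in particular the hypothesis $M_i = 0$ for $i \neq 0,1$ of Theorem \ref{injective dimension formula} is satisfied.

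Next I would compute each term appearing on the right-hand side of the injective dimension formula. Since $M_1 = 0$, the summand $\injdim_{\Lambda} M_1$ contributes $-\infty$ and drops out. The derived coaction morphism $\Theta_M^{0} : M_0 \to \RHom_{\Lambda}(C, M_1)$ is the zero morphism into the zero object, hence for each $a \geq 0$ the morphism
\[
\Theta_M^{a} : \RHom_{\Lambda}(C^{a}, M_0) \longrightarrow \RHom_{\Lambda}(C^{a+1}, M_1) = 0
\]
is likewise zero, so its cone is $\RHom_{\Lambda}(C^{a}, M)[1]$ in $\sfD(\Mod \Lambda)$. Using Lemma \ref{homological dimension lemma}.(2), which gives $\injdim(N[1]) = \injdim N - 1$, we obtain
\[
\injdim_{\Lambda}(\cone \Theta_M^{a}) + a + 1 = \injdim_{\Lambda} \RHom_{\Lambda}(C^{a}, M) + a.
\]

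Finally, I would identify the left-hand side: by the graded-ungraded coincidence for injective dimension (the complex version stated as Lemma \ref{finitely graded injective dimension complex lemma}, applied to the complex $M$ concentrated in bounded graded degrees), we have $\injdim_{A} M = \grinjdim_{A} M$. Plugging this identification together with the computation above into Theorem \ref{injective dimension formula} produces exactly
\[
\injdim_{A} M = \sup \{ \injdim_{\Lambda} \RHom_{\Lambda}(C^{a}, M) + a \mid a \geq 0 \},
\]
as required.

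There is no real obstacle here; the only point requiring a little care is the legitimacy of passing Theorem \ref{injective dimension formula} from a graded module to an object of $\sfD^{+}(\GrMod A)$, but this is precisely the extension of the theorem to the derived setting already asserted in the paragraph following Theorem \ref{introduction theorem 1}. Once that is granted, the corollary is simply the $M_1 = 0$ specialization combined with the shift identity for injective dimension.
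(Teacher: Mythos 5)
Your proposal is correct and matches the paper's intended derivation: specialize Theorem \ref{injective dimension formula} to the graded module concentrated in degree $0$ obtained by restriction along the augmentation, observe that $M_1 = 0$ makes $\Theta_M^a$ a map to zero so that $\cone\Theta_M^a \cong \RHom_\Lambda(C^a,M)[1]$, apply the shift identity $\injdim(N[1]) = \injdim N - 1$ from Lemma \ref{homological dimension lemma}, and identify $\injdim_A M$ with $\grinjdim_A M$ via Lemma \ref{finitely graded injective dimension complex lemma}. You also correctly flag (and resolve) the one point of care, namely that the theorem must be invoked in its derived-category extension rather than only for a single graded module.
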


Combining this corollary with Proposition \ref{global dimension lemma}, we see the next corollary.  

\begin{corollary}\label{injective dimension formula corollary 2}
We have the equation 
\[
\gldim A  =  \sup \{ \injdim_{\Lambda}(\RHom_{\Lambda}(C^{a}, M))  + a \mid M \in \Mod \Lambda, \ a \geq 0 \}.
\]
\end{corollary}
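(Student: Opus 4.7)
The plan is to combine Corollary \ref{injective dimension formula corollary} with the global-dimension identity of Proposition \ref{global dimension lemma}. Specifically, Proposition \ref{global dimension lemma} gives
\[
\gldim A = \sup \{ \injdim_{A} M \mid M \in \Mod A,\ MA_{\geq 1} = 0 \},
\]
and an $A$-module $M$ with $MA_{\geq 1} = 0$ is precisely a $\Lambda$-module regarded as an $A$-module via the augmentation $\mathsf{aug}: A \to \Lambda$. So I would first reduce the computation of $\gldim A$ to computing $\injdim_A M$ for arbitrary $M \in \Mod \Lambda$.

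Next, I would apply Corollary \ref{injective dimension formula corollary} to such $M$. Viewed as an object of $\sfD^{+}(\Mod \Lambda)$, we have
\[
\injdim_{A} M = \sup \{ \injdim_{\Lambda} \RHom_{\Lambda}(C^{a}, M) + a \mid a \geq 0 \}.
\]
Substituting this into the previous display and commuting the two suprema yields exactly the stated formula. This is essentially a one-line composition of the two results, so there is no real obstacle.

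The only minor point worth checking carefully is that the $M$ in Corollary \ref{injective dimension formula corollary} (which is an object of $\sfD^{+}(\Mod \Lambda)$) can legitimately be taken to be an honest module concentrated in degree $0$, and that doing so indeed computes $\injdim_A$ of $M$ considered as an $A$-module via $\mathsf{aug}$. This is built into the setup of Section \ref{Injective dimension formula}: such an $M$ corresponds to the graded $A$-module concentrated in degree $0$ (so $M_0 = M$, $M_1 = 0$), for which Theorem \ref{injective dimension formula} specializes via $\injdim_A = \grinjdim_A$ (Proposition \ref{finitely graded injective dimension lemma} / Lemma \ref{finitely graded injective dimension complex lemma}) to the formula of Corollary \ref{injective dimension formula corollary}. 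Hence no additional argument is needed beyond invoking these two corollaries in sequence.
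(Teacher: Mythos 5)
Your proposal is correct and matches the paper's own argument exactly: the text preceding the corollary says precisely "Combining this corollary with Proposition \ref{global dimension lemma}, we see the next corollary." The additional check you flag (that a $\Lambda$-module viewed via $\mathsf{aug}$ is the graded $A$-module concentrated in degree $0$, and that graded and ungraded injective dimensions agree) is exactly the content justifying the reduction, so nothing is missing.
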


We can prove 
Theorem \ref{injective dimension formula} 
in the same way of Theorem \ref{projective dimension formula} 
by using following Lemmas, which are dual of Lemma \ref{degree lemma} and Lemma \ref{principal lemma}.

\begin{lemma}\label{degree lemma injective}
Let $M$ be 
 a graded $A$-module such that $M_{i} = 0$ for $i \neq 0,1$ 
 and $I$ an injective resolution of $M$ in $\GrMod A$. 
Then, 
\begin{enumerate}[(1)]

\item $\frks_{> 1} I$ is null-homotopic and hence $\frks_{ \leq 1} I$ is homotopic to $I$ via the canonical morphism. 

\item  $\frki_{1} I $ is an injective resolution  of $M_{1}$ as  complexes of $\Lambda$-modules.

\item We have $\grRHom_{A}(\Lambda, M)_{i} = \frki_{i} I$ in $\sfD(\Mod \Lambda)$.

\item 
For $L \in \sfD(\Mod \Lambda)$, we have $\grRHom_{A}(L, M)_{1} = \RHom_{\Lambda}(L, M_{1})$. 
\end{enumerate}
\end{lemma}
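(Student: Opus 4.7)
The plan is to prove this lemma by dualizing Lemma \ref{degree lemma} and parts of Lemma \ref{principal lemma}, using the decomposition of complexes of graded injective $A$-modules developed in Section \ref{Decomposition of graded injective $A$-modules}. Two preliminary facts will be invoked in the style of the projective case: an injective analogue of Lemma \ref{Hom complex lemma projective} giving an isomorphism of complexes
\[
\Hom^{\bullet}_{\GrMod A}(N, J) \cong \left( \prod_{i \in \ZZ} \Hom^{\#}_{\Lambda}(N_i, \frki_i J),\ \delta \right)
\]
for $J \in \sfC(\GrInj A)$ and $N \in \sfC(\GrMod A)$, derived from Corollary \ref{injective Hom decomposition corollary} together with the cross-differential $\frkj_i$ from Section \ref{Decomposition of graded injective $A$-modules}; and the injective analogue of Lemma \ref{projectively cofibrant lemma}, giving that $\frki_i I$ is DG-injective when $I$ is.

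For (1) I follow the template of Lemma \ref{degree lemma}(1). From the block matrix description of differentials the differential of $I$ is lower-triangular with respect to the decomposition $I = \bigoplus_j \frks_j I$, so $\frks_{\leq 1}I$ is a subcomplex and we obtain an exact sequence $0 \to \frks_{\leq 1}I \to I \to \frks_{>1}I \to 0$ in $\sfC(\GrInj A)$. Applying the injective Hom-complex formula with $N$ satisfying $N_{>1}=0$ and $J = \frks_{>1}I$ (which has $\frki_i J = 0$ for $i \leq 1$) yields $\Hom^{\bullet}_{\GrMod A}(N, \frks_{>1}I) = 0$. Choosing $N = M$, using the quasi-isomorphism $I \simeq M$ together with DG-injectivity of $\frks_{>1}I$, one obtains $\Hom_{\sfD(\GrMod A)}(I, \frks_{>1}I) = 0$; combined with the exact triangle and the similar vanishing of $\Hom_{\sfK(\GrMod A)}(\frks_{\leq 1}I, \frks_{>1}I[n])$ for all $n$, this forces the identity of $\frks_{>1}I$ to be null-homotopic.

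With (1) in hand, (2) follows by taking the degree-$1$ part of $I \simeq \frks_{\leq 1}I$: since $\frks_j I$ is supported in degrees $\{j-1, j\}$, one has $(\frks_{\leq 1}I)_1 = (\frks_1 I)_1 = \frki_1 I$, which consists of injective $\Lambda$-modules by Lemma \ref{basic injective lemma}(1) and resolves $M_1 = \tuH(I)_1$. For (3), the defining formula $\grHom_A(\Lambda, I)_i = \frki_i I$ together with DG-injectivity of $I$ gives $\grRHom_A(\Lambda, M)_i = \frki_i I$ in $\sfD(\Mod \Lambda)$. For (4), regard $L$ as a graded $A$-module in degree $0$ and compute $\grRHom_A(L, M)_1 = \Hom^{\bullet}_{\GrMod A}(L, I(1))$; via (1) and the same Hom-complex formula, the only surviving factor is $\Hom^{\bullet}_{\Lambda}(L, \frki_1 I)$, which by (2) is $\RHom_\Lambda(L, M_1)$.

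The main obstacle will be part (1), specifically verifying DG-injectivity of $\frks_{>1}I$ so as to promote the Hom-vanishing from the homotopy category to the derived category. If the most direct dualization of Lemma \ref{projectively cofibrant lemma}(2) requires $\frki_i I = 0$ for $i \gg 0$, which is not automatic for an arbitrary injective resolution, I see two escape routes: either replace $I$ by a minimal injective resolution where this vanishing follows from $M_i = 0$ for $i > 1$ via Lemma \ref{injective lemma}(1); or bypass DG-injectivity altogether by working directly with the long exact $\Hom$-sequence in $\sfK(\GrInj A)$ of the triangle $\frks_{\leq 1}I \to I \to \frks_{>1}I \to +$, exploiting that $\Hom_{\sfK}(\frks_{\leq 1}I, \frks_{>1}I[n])$ and $\Hom_{\sfK}(I, \frks_{>1}I[n])$ vanish in all cohomological degrees since each is described by a product of Hom-complexes with zero factors.
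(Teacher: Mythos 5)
Your plan matches the paper's intent exactly: the paper gives no explicit proof of this lemma and simply declares that the arguments of Section~\ref{A criterion of perfectness} should be dualized, which is what you do. Parts (2)--(4) and the overall architecture of (1) are fine. You are also right to single out the one place where the dualization is not formal: to convert $\Hom_{\sfK}(I, \frks_{>1}I[n])$ into $\Hom_{\sfD}(M, \frks_{>1}I[n])$ you need $\frks_{>1}I$ to be DG-injective. Your escape route (b), however, is flawed. The injective Hom-complex formula writes $\Hom^{\bullet}_{\GrMod A}(I, \frks_{>1}I)$ as a product of factors $\Hom^{\bullet}_{\Lambda}(I_i, \frki_i(\frks_{>1}I))$; for $i>1$ the source $I_i$ need not vanish for a non-minimal $I$ (only its cohomology vanishes), so these are not all zero factors, and the formula alone does not kill $\Hom_{\sfK}(I, \frks_{>1}I)$. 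Route (a) does work, but Lemma~\ref{injective lemma}(1) by itself does not yield $\frki_i I = 0$ for $i>1$; you need a separate induction on the cohomological degree $n$ using that in a minimal resolution $\soc(I^{n+1})$ embeds in $\im d^n_I$, which by the inductive hypothesis lies in graded degrees $\leq 1$.

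The cleanest fix, which mirrors the projective case precisely, is to apply the dual of Lemma~\ref{projectively cofibrant lemma}(2) not to $\frks_{>1}I$ but to $\frks_{\leq 1}I$: its cogenerator complexes $\frki_i(\frks_{\leq 1}I)$ vanish for all $i>1$ automatically and are DG-injective for $i\leq 1$ by the dual of Lemma~\ref{projectively cofibrant lemma}(1), so the exhaustiveness hypothesis of the dual lemma holds and $\frks_{\leq 1}I$ is DG-injective. Then $\frks_{>1}I$ is DG-injective by two-out-of-three in the triangle $\frks_{\leq 1}I \to I \to \frks_{>1}I \to$, exactly as one sees $\frkt_{<0}P$ is DG-projective through $\frkt_{\geq 0}P$ in the projective case. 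With DG-injectivity of $\frks_{>1}I$ secured, your argument for (1), and hence the whole proof, goes through.
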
 

\begin{lemma}\label{A key observation for the criterion}
Let $M$ be as in Theorem \ref{injective dimension formula} 
and $I \in \sfC(\GrInj A)$  an injective resolution of $M$.  
Then, we have the following isomorphisms in $\sfD(\Mod \Lambda)$. 
\[
\begin{split} 
&\frki_{1} I \cong M_{1}, \ \ 
\frki_{-i} I   \cong
\cone \left( \Theta_{M}^{i} \right)[-i-1] 
\textup{ for } i \geq 0
\end{split}\]
\end{lemma}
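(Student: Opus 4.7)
The plan is to dualize the proof of Lemma~\ref{principal lemma}, replacing $M\lotimes_A-$ with $\grRHom_A(-,M)$ and iterated $\lotimes_\Lambda C$ with iterated $\RHom_\Lambda(C,-)$. The first isomorphism $\frki_1 I\cong M_1$ is immediate from Lemma~\ref{degree lemma injective}(2), which already exhibits $\frki_1 I$ as an injective resolution of $M_1$ in $\sfC(\Mod \Lambda)$.

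For the second isomorphism, I would apply the contravariant functor $\grRHom_A(-,M)$ to the canonical short exact sequence $0\to C(-1)\to A\to \Lambda\to 0$ in $\GrMod A$, use $\grRHom_A(C(-1),M)=\grRHom_A(C,M)(1)$, and extract via Lemma~\ref{degree lemma injective}(3) the graded-degree $j$ exact triangle
\[
\frki_j I \to M_j \to \grRHom_A(C,M)_{j+1} \to \frki_j I[1]
\]
in $\sfD(\Mod \Lambda)$. For $j=0$, Lemma~\ref{degree lemma injective}(4) identifies $\grRHom_A(C,M)_1$ with $\RHom_\Lambda(C,M_1)$ and the middle arrow with the derived coaction $\Theta_M^0$ by construction, so $\frki_0 I\cong \cone(\Theta_M^0)[-1]$, which is the $i=0$ case. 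For $j=-i$ with $i\geq 1$, the hypothesis $M_{-i}=0$ collapses the triangle to $\frki_{-i} I\cong \grRHom_A(C,M)_{-i+1}[-1]$.

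The key auxiliary ingredient is the general formula
\[
\grRHom_A(C,M)_j \cong \RHom_\Lambda(C,\frki_j I), \qquad (\dagger)
\]
extending Lemma~\ref{degree lemma injective}(4) from $j=1$ to all $j\in\ZZ$. I would derive $(\dagger)$ by applying $\grHom_A(C,-)$ to the product decomposition $I=\prod_j\frks_j I$ of Lemma~\ref{decomposition of graded injective modules}: using $\frks_j I=\grHom_\Lambda(A,\frki_j I)(-j)$ and the $(-\otimes_\Lambda A,\grHom_\Lambda(A,-))$-adjunction, together with the fact that $C$ as a graded $\Lambda$-module is concentrated in degree $0$, the $A$-Hom localizes to a $\Lambda$-Hom shifted into graded degree $j$; since each $\frki_j I$ is a complex of injective $\Lambda$-modules, the underived equality directly upgrades to the derived statement $(\dagger)$.

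With $(\dagger)$ in hand, the formula $\frki_{-i} I\cong \cone(\Theta_M^i)[-i-1]$ follows by induction on $i\geq 0$: the base case is the $j=0$ computation above, and the inductive step combines the collapsed triangle, $(\dagger)$, the identity $\Theta_M^i=\RHom_\Lambda(C,\Theta_M^{i-1})$, and the exactness of $\RHom_\Lambda(C,-)$, which commutes with cones. I expect the main obstacle to be the clean justification of $(\dagger)$, where one must confirm that $\grHom_A(C,-)$ interchanges with the product decomposition of the cogenerators of $I$ in the expected way; this is the injective analogue of the projective bookkeeping of Section~\ref{A criterion of perfectness}.
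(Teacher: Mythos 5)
Your proof is correct and is essentially the dualization of Lemma~\ref{principal lemma} that the paper has in mind; the base cases, the collapsed triangle for $j=-i\le -1$, the identification of the $j=0$ middle arrow with $\Theta_M^0$, and the induction via $(\dagger)$ all check out, and the index bookkeeping is consistent (with $C$ regarded as a graded $A$-module concentrated in degree $0$ throughout). The only cosmetic difference from the direct dual of Lemma~\ref{principal lemma} is that, instead of iterating the triangle with $\RHom_\Lambda(C^a,-)$ built in (as $\lotimes_\Lambda C^a$ was in the projective case), you push the iteration into a single induction through $(\dagger)$; this is equivalent and arguably cleaner. Concerning the step you flag as the potential obstacle: $(\dagger)$ does hold, and a slicker route than unwinding the product decomposition is the tensor--Hom adjunction. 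Since the right $A$-module $C$ is inflated from $\Lambda$ along the augmentation, $C_A \cong C_\Lambda \otimes_\Lambda \Lambda_A$, hence
\[
\grHom_A^\bullet(C,I)\;\cong\;\grHom_\Lambda^\bullet\bigl(C,\grHom_A^\bullet(\Lambda,I)\bigr),
\]
whose graded degree-$j$ piece is $\Hom_\Lambda^\bullet(C,\frki_j I)$; combining this with Lemma~\ref{degree lemma injective}(3) and the fact that each $\frki_j I$ is DG-injective (the injective analogue of Lemma~\ref{projectively cofibrant lemma}(1)) gives $(\dagger)$ at the derived level without having to trace the off-diagonal $\frkj_i$-components of the differential through the decomposition. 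Alternatively, $(\dagger)$ can be read off by comparing the triangle you derive with the triangle of Lemma~\ref{injective lemma}(1), which have the same two initial terms $\frki_i I\to I_i$.
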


\begin{remark}\label{general theorem injective}
We give a remark similar to  Remark \ref{general theorem projective}. 
It seems likely that we can generalize  Theorem \ref{injective dimension formula} 
for an object $M \in \sfD(\GrMod A)$ such that $M_{i} = 0$ for $|i| \gg 0$ 
by using the derived coaction map.

By adjunction,  
the derived action morphism $\rho_{i}: M_{i} \lotimes_{\Lambda} C \to M_{i+1}$ 
induces 
the derived coaction morphism $\Theta'_{i}: M_{i} \to \RHom_{\Lambda}(C, M_{i +1})$.   
Note that we have $\RHom_{\Lambda}(C, \Theta'_{i + 1}) \circ \Theta'_{i}  = 0$. 
Hence we obtain a complex $\cY_{a}$ of objects of $\sfD^{\mrb}(\Mod \Lambda)$ 
\[
\cY_{a}: 
M_{j_{0}  -a} \xrightarrow{ \Theta'_{ j_{0} -a} } \RHom_{\Lambda} (C, M_{j_{0} -a +1}) 
\to \cdots \xrightarrow{\RHom(C^{a-1}, \Theta'_{j_{0} -1})} \RHom_{\Lambda}(C^{a}, M_{j_{0}})
\]
where we set  $j_{0}:= \max\{ i \mid M_{i} \neq 0\}$. 
Using  the totalization $\tot(\cY_{a})$ of the complex $\cY_{a}$, 
we may be able to  establish an injective dimension formula for $M$. 
However we should care about a subtlety of the totalization of a complex of objects of a triangulated category (see \cite{GM}).

We are content with Theorem \ref{injective dimension formula} which is sufficient for our applications. 
\end{remark}

\begin{remark}\label{projective and injective}
We have obtained formulas for projective dimension and injective dimension. 
If we only deal with a finite dimensional algebra $\Lambda$  over a field $\kk$  
and a suitably bounded complex of   finite dimensional modules over it,  
then we can deduce one from the other by using the natural isomorphism 
\[
M \lotimes_{\Lambda} \tuD(N ) \to \tuD\RHom_{\Lambda}(M,N) 
\] 
for $M,N \in \sfD^{-}(\mod \Lambda)$ 
where $\tuD(-) = \Hom_{\kk}(-, \kk)$ denotes the $\kk$-duality. 
\end{remark}

\subsection{A criterion of Iwanaga-Gorensteinness}\label{A criterion of Iwanaga-Gorensteinness}

In this Section \ref{A criterion of Iwanaga-Gorensteinness}, 
we discuss a condition that a trivial extension algebra $A = \Lambda \oplus C$ 
is an Iwanaga-Gorenstein (IG) algebra.

\subsubsection{Right asid bimodules}

Before recalling the definition of IG-algebra, 
we give  a condition that $A$ has finite right self-injective dimension. 
For this purpose, we give a description of the morphism $\Theta_{A}^{a}$.

We define  a morphism  $\tilde{\lambda}_{r}: \Lambda \to \Hom_{\Lambda}(C,C)$ by the formula 
$\tilde{\lambda}_{r}(x)(c) := xc$ for $x \in \Lambda$ and $c \in C$. 
We denote the composite morphism  $\lambda_{r} = \mathsf{can} \circ \tilde{\lambda}_{r}$ in $\sfD(\Mod \Lambda)$ 
where $\mathsf{can}$ is the canonical morphism $\Hom_{\Lambda}(C,C) \to \RHom_{\Lambda}(C,C)$
\[
\lambda_{r}: 
\Lambda \xrightarrow{\tilde{\lambda}_{r}} \Hom_{\Lambda}(C,C) \xrightarrow{\mathsf{can}} \RHom_{\Lambda}(C,C). 
\]
We can check that $\lambda_{r}=\Theta_{A}^{0}$. 
Therefore, by  Corollary \ref{finitely graded IG-lemma} and Theorem  \ref{injective dimension formula},  
we deduce the following result.

\begin{theorem}\label{right asid theorem} 
The following conditions are equivalent: 
\begin{enumerate}[(1)]
\item $\injdim A_{A} < \infty$.  

\item the following conditions are satisfied: 

\begin{enumerate}[ {Right} $\asid$ 1.]
\item $\injdim_{\Lambda} C < \infty$. 

\item $\injdim_{\Lambda} \cone (\RHom_{\Lambda}(C^{a}, \lambda_{r} )) < \infty $ 
for $a \geq 0$. 

\item  The morphism $\RHom_{\Lambda}(C^{a}, \lambda_{r} )$ is an isomorphism for $a \gg 0$. 
\end{enumerate}
\end{enumerate}
\end{theorem}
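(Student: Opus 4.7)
The plan is to apply Theorem \ref{injective dimension formula} directly to $M = A$ viewed as a graded $A$-module, after identifying the relevant derived coaction morphism. First, I would invoke Corollary \ref{finitely graded IG-lemma} to replace $\injdim A_A$ by $\grinjdim A_A$. Since $A = \Lambda \oplus C$ has graded components $A_0 = \Lambda$, $A_1 = C$, and $A_i = 0$ otherwise, the hypothesis $M_i = 0$ for $i \neq 0,1$ of Theorem \ref{injective dimension formula} is satisfied.

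Next, I would identify $\Theta_A^0$ with $\lambda_r$. Tracing through the definition in Section \ref{Injective dimension formula}, the morphism $\tilde{\theta}_{A,0} \colon A_0 \to \Hom_\Lambda(C, A_1)$ is obtained by applying $\grHom_A(-,A)$ to the canonical inclusion $C \hookrightarrow A(1)$ and looking at degree $1$; this sends $x \in \Lambda$ to the left-multiplication endomorphism of $C$, matching $\tilde{\lambda}_r$ exactly. Post-composing with the canonical map to $\RHom_\Lambda(C,C)$ shows $\Theta_A^0 = \lambda_r$, and hence by definition $\Theta_A^a = \RHom_\Lambda(C^a, \lambda_r)$ for all $a \geq 0$.

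Combining these two steps with Theorem \ref{injective dimension formula} yields
\[
\injdim A_A = \sup\bigl\{\,\injdim_\Lambda C,\ \injdim_\Lambda \cone(\RHom_\Lambda(C^a, \lambda_r)) + a + 1 \ \big|\ a \geq 0\,\bigr\}.
\]
The equivalence (1) $\Leftrightarrow$ (2) then comes from inspecting when this supremum is finite: finiteness of the single term $\injdim_\Lambda C$ gives Right $\asid$ 1; finiteness of each individual term $\injdim_\Lambda \cone(\RHom_\Lambda(C^a,\lambda_r)) + a + 1$ gives Right $\asid$ 2; and because the contribution $a+1$ is unbounded, the supremum can only be finite if for all sufficiently large $a$ the cone contributes $-\infty$, which, recalling that $\injdim X = -\infty$ iff $X = 0$ in $\sfD(\Mod \Lambda)$, is exactly the statement that $\RHom_\Lambda(C^a, \lambda_r)$ is an isomorphism for $a \gg 0$, giving Right $\asid$ 3.

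I do not expect any serious obstacle: the theorem is essentially a specialization of the injective dimension formula to $M = A$. The only place requiring care is the identification $\Theta_A^0 = \lambda_r$, which is a definition unwinding rather than a substantial argument, and the elementary bookkeeping separating finiteness of each term from the asymptotic vanishing that is needed to make the supremum finite.
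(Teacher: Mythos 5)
Your proof is correct and follows essentially the same route as the paper: the paper also reduces to graded injective dimension via Corollary \ref{finitely graded IG-lemma}, checks $\lambda_r = \Theta_A^0$ by unwinding the definition, and then invokes Theorem \ref{injective dimension formula}, whose ``in particular'' clause already contains the three-way splitting into the Right $\asid$ conditions that you re-derive by hand from the supremum formula.
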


\begin{definition}\label{asid number definition}
\begin{enumerate}[(1)]
\item 
A $\Lambda$-$\Lambda$-bimodule $C$ is called a \textit{right asid (attaching self injective dimension) bimodule} if $\injdim A_{A} < \infty$. 

\item 
For a right asid bimodule $C$, we define the \textit{right asid number} $\alpha_{r}$ to be 
\[
\alpha_{r} := \min\{ a \geq 0 \mid \RHom_{\Lambda}(C^{a}, \lambda_{r} ) \textup{ is an isomorphism} \}. 
\]
\end{enumerate}
\end{definition}

By Lemma \ref{degree lemma injective} and Lemma \ref{A key observation for the criterion}, 
we obtain description of the right  asid numbers $\alpha_{r}$. 
 
\begin{corollary}\label{asid number corollary}
Let $\Lambda$ be an algebra, 
$C$ a right asid bimodule over $\Lambda$ 
and $A = \Lambda \oplus C$ the trivial extension algebra.  
We regard  a minimal injective resolution $I$ of $A$ as a complex. 
Then, we have 
\[
\alpha_{r}  = \max\{ a\geq -1 \mid \RHom_{A}(\Lambda, A)_{-a} \neq 0\} + 1 = \max \{ a \geq -1 \mid \frki_{-a} I \neq 0\} +1.
\]
\end{corollary}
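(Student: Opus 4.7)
The plan is to deduce the corollary by specializing Lemma \ref{A key observation for the criterion} to the case $M = A$, where $A_0 = \Lambda$, $A_1 = C$, and then identifying the relevant cones with the morphisms $\RHom_\Lambda(C^a,\lambda_r)$ whose (non)invertibility controls $\alpha_r$.

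First, I would observe that under the identification $\Theta_A^0 = \lambda_r$ (established in the paragraph preceding Theorem \ref{right asid theorem}), the definition $\Theta_M^a := \RHom_\Lambda(C^a,\Theta_M^0)$ gives $\Theta_A^a = \RHom_\Lambda(C^a,\lambda_r)$. Hence by the definition of $\alpha_r$ in Definition \ref{asid number definition},
\[
\alpha_r = \min\{a \geq 0 \mid \Theta_A^a \text{ is an isomorphism in } \sfD(\Mod \Lambda)\}
        = \min\{a \geq 0 \mid \cone(\Theta_A^a) = 0\}.
\]
Next, applying Lemma \ref{A key observation for the criterion} to $M = A$ (which satisfies $M_i = 0$ for $i \neq 0,1$), for the minimal graded injective resolution $I$ of $A$ we have $\frki_{-a} I \cong \cone(\Theta_A^a)[-a-1]$ in $\sfD(\Mod \Lambda)$ for every $a \geq 0$. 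Consequently $\frki_{-a}I = 0$ in $\sfD(\Mod \Lambda)$ if and only if $\Theta_A^a$ is an isomorphism, which gives
\[
\alpha_r = \min\{a \geq 0 \mid \frki_{-a} I = 0\}.
\]

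The next step is the monotonicity observation: if $\Theta_A^a$ is an isomorphism, then so is $\Theta_A^{a+1} = \RHom_\Lambda(C,\Theta_A^a)$. Therefore the set $\{a \geq 0 \mid \Theta_A^a \text{ is iso}\}$ is upward closed in $\NN$, or equivalently, the set $\{a \geq 0 \mid \frki_{-a} I \neq 0\}$ is a (possibly empty) initial segment of $\NN$. Under this monotonicity the reformulation
\[
\min\{a \geq 0 \mid \frki_{-a} I = 0\} = \max\{a \geq -1 \mid \frki_{-a} I \neq 0\} + 1
\]
is immediate, with the convention that $\max \emptyset = -1$ handles the case $\alpha_r = 0$. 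This yields the second equality of the corollary.

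Finally, for the first equality I would invoke Lemma \ref{degree lemma injective}.(3), which identifies $\grRHom_A(\Lambda,A)_i \cong \frki_i I$ in $\sfD(\Mod \Lambda)$ for every $i \in \ZZ$; the first expression then coincides with the second. The only subtlety worth flagging—and in effect the main (mild) obstacle—is making sure that ``$= 0$'' is interpreted in the derived category $\sfD(\Mod \Lambda)$ throughout, so that the isomorphism $\frki_{-a} I \cong \cone(\Theta_A^a)[-a-1]$ transports vanishing back and forth; since $\frki_{-a} I$ is a complex of injective $\Lambda$-modules coming from a minimal graded injective resolution, vanishing in $\sfD(\Mod \Lambda)$ is equivalent to genuine vanishing of the complex, so no additional care is needed.
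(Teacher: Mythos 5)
Your argument is correct and matches the paper's intended reasoning: the paper gives no more than a one-line citation of Lemma \ref{degree lemma injective} and Lemma \ref{A key observation for the criterion}, and the ingredients you assemble --- the identification $\Theta_A^0=\lambda_r$, Lemma \ref{A key observation for the criterion} applied to $M=A$, the upward-closedness of $\{a\geq 0\mid\Theta_A^a\text{ is an isomorphism}\}$ under $\RHom_\Lambda(C,-)$, and Lemma \ref{degree lemma injective}.(3) --- are exactly what the authors have in mind.

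The one point you dispatch too quickly is the last one: that because $I$ is a \emph{minimal} graded injective resolution, $\frki_{-a}I=0$ in $\sfD(\Mod\Lambda)$ is equivalent to $\frki_{-a}I=0$ as a complex. This is true, but the justification you offer (``complex of injectives from a minimal resolution'') does not by itself deliver it: a bounded-below acyclic complex of injectives is contractible, not necessarily zero, so something specific to minimality must intervene. The mechanism is essentiality of kernels. If $\frki_{-a}I$ were acyclic but nonzero, let $n_0$ be the least $n$ with $\frki_{-a}I^{n}\neq 0$; acyclicity at $n_0$ then forces $\frki_{-a}(d^{n_0})$ to be injective. On the other hand $\frki_{-a}I^{n_0}=\grHom_A(\Lambda,I^{n_0})_{-a}$ is the degree-$(-a)$ part of the $A_{\geq 1}$-socle of $I^{n_0}$, and $\frki_{-a}(d^{n_0})$ is the restriction of $d^{n_0}$ to it. Pick $0\neq x$ in this socle; since $xC=0$, the subset $xA=x\Lambda$ is a nonzero graded $A$-submodule of $I^{n_0}$, and minimality of $I$ makes $\Ker d^{n_0}$ essential in $I^{n_0}$, so $x\Lambda\cap\Ker d^{n_0}\neq 0$. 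That produces a nonzero element of $\Ker\frki_{-a}(d^{n_0})$, contradicting injectivity. Spelling this out would close the only real gap in an otherwise complete argument.
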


Dually, 
we define the notion of  \textit{left asid bimodule}  and the \textit{left asid number} $\alpha_{\ell}$ for them 
by using the left version $\lambda_{\ell}$  of $\lambda_{r}$. 
\[
\lambda_{\ell}: \Lambda \to \RHom_{\Lambda^{\op}}(C, C). 
\]
A $\Lambda$-$\Lambda$-bimodule $C$ is called \textit{asid} if it is both right and left asid bimodules.



\subsubsection{A criterion of Iwanaga-Gorensteinness}

Recall that an algebra  is called IG if it is Noetherian on both sides and 
has finite self-injective dimension on both sides. 
We note that by Zaks' Theorem, 
right self-injective dimension and left self-injective dimension of 
an IG-algebra $A$ coincide. 

The notion of graded IG algebra is defined for graded algebras in the same way. 
It is easy to see that for a finitely graded algebra  $A= \bigoplus_{i = 0}^{\ell} A_{i}$ 
graded IG-ness and IG-ness as an ungraded algebra are the same condition.   
Moreover in this case, we have the following equations by Corollary \ref{finitely graded IG-lemma}
\[
\injdim_{A} A_{A} = \injdim_{A^{\op}} {}_{A} A = \grinjdim_{A} A_{A} = \grinjdim_{A^{\op}} {}_{A}A.
\]

It is worth mentioning the following fact  which is obtained as a combination of known results. 

\begin{proposition}
A (not necessary finitely) graded algebra $A = \bigoplus_{i \geq 0} A_{i}$ is graded IG 
if and only if it is (ungraded) IG. 
\end{proposition}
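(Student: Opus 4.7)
The plan is to decompose the biconditional into two independent equivalences and reduce each to a known result for $\NN$-graded algebras: (i) $A$ is right (resp.\ left) graded Noetherian if and only if $A$ is right (resp.\ left) Noetherian, and (ii) under Noetherianness, $\grinjdim_{A} A_{A} = \injdim_{A} A_{A}$ (and similarly on the left). Given these, graded IG-ness and ungraded IG-ness coincide because both conditions are the conjunction of Noetherianness on both sides and finiteness of self-injective dimension on both sides.

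For the Noetherian equivalence, the implication ungraded Noetherian $\Rightarrow$ graded Noetherian is immediate since every graded right ideal is a right ideal. For the converse, I would invoke the classical result for $\NN$-graded algebras in \cite{NV:Graded and Filtered}: the key point is that for an $\NN$-graded ring, given a right ideal $J$, its graded associated object controls $J$ up to finite generation, and the ascending chain condition on graded right ideals forces the ascending chain condition on arbitrary right ideals. The left version is symmetric.

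For the injective dimension equality, first I would treat the inequality $\injdim_{A} A_{A} \leq \grinjdim_{A} A_{A}$: for an $\NN$-graded algebra, a graded injective $A$-module is injective as an ungraded $A$-module (a direct extension of Lemma \ref{basic injective lemma}(2) and the decomposition in Lemma \ref{decomposition of graded injective modules}, or by direct reference to \cite{NV:Graded and Filtered}), so any graded injective resolution of $A_{A}$ is simultaneously an ungraded injective resolution. For the reverse inequality, assume $n := \injdim_{A} A_{A} < \infty$ and take a graded injective resolution $0 \to A \to I^{0} \to I^{1} \to \cdots$ in $\GrMod A$ (which exists by general principles). By Theorem \ref{Avramov-Foxby:theorem 2.4}, $K := \Ker d_{I}^{n}$ is injective as an ungraded $A$-module. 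Under the Noetherian assumption on $A$, a graded $A$-module that is ungraded injective is graded injective (again \cite{NV:Graded and Filtered}), so $K$ is graded injective, which truncates the graded resolution at length $n$ and gives $\grinjdim_{A} A_{A} \leq n$.

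The main obstacle is the assertion that, for an $\NN$-graded Noetherian algebra, a graded module which is ungraded injective is graded injective. This fails for general $\ZZ$-graded rings (where one only has that graded injectives are closed summand-wise among ungraded injectives), so the argument genuinely needs the $\NN$-grading together with right (resp.\ left) Noetherianness to run a graded Baer-type criterion — essentially, every right ideal is approximated by graded right ideals in a way compatible with extending graded homomorphisms. Once this input is accepted from \cite{NV:Graded and Filtered}, the rest of the proof is a direct combination of the ingredients already set up in Section \ref{Homological algebra of finitely graded algebras}, and the finitely graded special case (Corollary \ref{finitely graded IG-lemma}) serves as a sanity check for the argument.
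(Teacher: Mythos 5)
Your reduction to (i) the Noetherian equivalence and (ii) a comparison of graded and ungraded self-injective dimensions matches the paper's strategy, and the Noetherian step is handled identically (via \cite[II 3.1]{NV:Graded and Filtered}). The gap is in (ii): you assert that for a not-necessarily-finitely $\NN$-graded algebra, a graded injective $A$-module is injective as an ungraded $A$-module, and call this ``a direct extension'' of Lemmas \ref{basic injective lemma} and \ref{decomposition of graded injective modules}. This is false. Those lemmas depend crucially on the finiteness of the grading: the decomposition $I \cong \bigoplus_i \frks_i I$ of Lemma \ref{decomposition of graded injective modules} is only an ungraded injective module because the sum is \emph{finite in each degree}, which fails once $A_i \neq 0$ for infinitely many $i$. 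A concrete counterexample: take $A = \kk[x]$ with its standard $\NN$-grading and $E = \kk[x,x^{-1}]$ with $E_i = \kk x^i$. By the graded Baer criterion $E$ is injective in $\GrMod A$, but $E$ is not divisible over $A$ (e.g.\ $(x+1)E \neq E$), so $E$ is not injective in $\Mod A$. Since your assertion (a) fails, the claimed inequality $\injdim_A A_A \leq \grinjdim_A A_A$ is not established by this route, and the Avramov--Foxby step in your reverse direction also collapses, because it silently uses the same assertion (you need the \emph{graded} injective resolution to be an \emph{ungraded} injective resolution before Theorem \ref{Avramov-Foxby:theorem 2.4} can be applied to it).

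More structurally: you are aiming for the exact equality $\grinjdim_A A_A = \injdim_A A_A$, but in the infinitely graded setting this is too strong. The paper instead proves the proposition using the two-sided bound
\[
\injdim A_A \ \leq\ \grinjdim A_A \ \leq\ \injdim A_A + 1,
\]
whose second (nontrivial) inequality is a theorem of Van den Bergh (see \cite[Theorem 12]{Yekutieli vdB theorem}), controlling how far a graded-injective module over a Noetherian graded ring can be from being ungraded-injective. That theorem, which your argument does not invoke, is exactly the extra input needed once the finitely graded decomposition lemmas are no longer available; and for the purposes of the proposition the $+1$ is harmless, since only the equivalence of finiteness of the two quantities is required.
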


\begin{proof}
This is a consequence of the following two results. 
A graded ring $A$ is graded left Noetherian if and only if it is (ungraded) left Noetherian 
by \cite[II 3.1]{NV:Graded and Filtered}.
For a (graded) Noetherian ring $A$, 
we have $\injdim A_{A} \leq \grinjdim A_{A} \leq \injdim A_{A} + 1$. 
The first inequality is well-known. 
The second is due to Van den Bergh (see \cite[Theorem 12]{Yekutieli vdB theorem}).
\end{proof}

A  trivial extension algebra $A = \Lambda \oplus C$ is Noetherian on both sides 
if and only if  so is $\Lambda$ and 
$C_{\Lambda}$ and ${}_{\Lambda} C$ are finitely generated. 
Hence by Theorem \ref{right asid theorem} and its left version, 
we obtain the following criterion of IG-ness of  a trivial extension algebra.

\begin{theorem}\label{asid theorem 1}
Let $\Lambda$ be a Noetherian algebra and $C$ a $\Lambda$-$\Lambda$-bimodule 
which is finitely generated as right and  as left $\Lambda$-modules respectively. 
The trivial extension algebra $A = \Lambda \oplus C$ is an Iwanaga-Gorenstein algebra if and only if 
the following conditions are satisfied: 
\begin{enumerate}[(1)]

\item $C$ satisfies the conditions right $\asid$ 1,2,3. 

\item $C$ satisfies the left \asid   \ conditions below

\begin{enumerate}[Left $\asid$ 1.]
\item $\injdim_{\Lambda^{\op}} {}_{\Lambda}C < \infty$.

\item $\injdim_{\Lambda^{\op}} \cone \RHom_{\Lambda^{\op}}(C^{a}, \lambda_{\ell}) < \infty $ 
for $a \geq 0$. 

\item  The morphism $\RHom_{\Lambda^{\op}}(C^{a},\lambda_{\ell})$ is an isomorphism for $a \gg 0$. 
\end{enumerate}
\end{enumerate}
\end{theorem}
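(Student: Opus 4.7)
The plan is to reduce this to Theorem \ref{right asid theorem} and its left-sided analogue, once the Noetherian hypothesis has been verified. By definition, $A$ is Iwanaga-Gorenstein iff $A$ is Noetherian on both sides together with $\injdim A_A<\infty$ and $\injdim {}_AA<\infty$. The hypotheses that $\Lambda$ is Noetherian and that both $C_\Lambda$ and ${}_\Lambda C$ are finitely generated give the Noetherian conclusion for $A$ directly from the structure of the trivial extension (an $A$-submodule of a finitely generated $A$-module is controlled, via the two-step filtration by $C$-powers, by $\Lambda$-submodules of finitely generated $\Lambda$-modules).

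Next, I would apply Theorem \ref{right asid theorem} verbatim to obtain that $\injdim A_A<\infty$ is equivalent to the three Right $\asid$ conditions on $C$. The key point here is that Theorem \ref{right asid theorem} was deduced from Theorem \ref{injective dimension formula} applied to the graded $A$-module $M=A_A$ which satisfies $M_i=0$ for $i\neq 0,1$, with $M_0=\Lambda$ and $M_1=C$, and the identification $\lambda_r=\Theta_A^0$. Nothing in the statement of the theorem to be proved needs to be re-derived at this stage.

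For the left-hand side, I would invoke the left version of Theorem \ref{right asid theorem}. Observe that left $A$-modules are right $A^{\op}$-modules, and the opposite algebra $A^{\op}$ is again a trivial extension: $A^{\op}=\Lambda^{\op}\oplus C$, where the bimodule $C$ is now regarded as a $\Lambda^{\op}$-$\Lambda^{\op}$-bimodule by swapping the two actions. Under this identification, the derived coaction morphism $\Theta_{A^{\op}}^0$ becomes precisely $\lambda_\ell:\Lambda\to \RHom_{\Lambda^{\op}}(C,C)$. Applying Theorem \ref{right asid theorem} to $A^{\op}$ therefore yields that $\injdim {}_AA=\injdim (A^{\op})_{A^{\op}}<\infty$ is equivalent to the three Left $\asid$ conditions.

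Combining the two equivalences gives the desired criterion. The only potential obstacle is purely notational rather than mathematical: one must be careful that the passage to $A^{\op}$ genuinely re-expresses left injective dimension in the form demanded by the right-sided theorem, i.e.\ that the $\RHom_{\Lambda^{\op}}(C^a,-)$ appearing in the Left $\asid$ conditions really is the iterated functor produced by the theorem applied to $A^{\op}$. Once that bookkeeping is written out, the proof collapses to a direct citation of Theorem \ref{right asid theorem} on both sides.
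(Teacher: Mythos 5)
Your proof is correct and follows the same route as the paper: the paper simply notes that $A$ is Noetherian on both sides iff $\Lambda$ is Noetherian and $C_\Lambda$, ${}_\Lambda C$ are finitely generated, and then cites Theorem \ref{right asid theorem} together with its left version. Your extra paragraph spelling out the identification $A^{\op}=\Lambda^{\op}\oplus C$ (with swapped actions) and $\Theta_{A^{\op}}^{0}=\lambda_\ell$ is exactly the bookkeeping the paper leaves implicit in the phrase ``and its left version.''
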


In \cite{anodai}, 
we characterize an asid bimodule $C$ of $\alpha_{r} =0, \alpha_{\ell} = 0$ 
as a cotilting bimodule over $\Lambda$ in the sense of Miyachi \cite{Miyachi}.

\begin{proposition}[{\cite{anodai}}]\label{cotilting proposition} 
Let $\Lambda$ and $C$ as in Theorem \ref{asid theorem 1}. 
A $\Lambda$-$\Lambda$-bimodule $C$ is 
an asid bimodule of $\alpha_{r} = 0, \alpha_{\ell} = 0$ 
if and only if it is  a $\Lambda$-$\Lambda$-cotilting bimodule. 
\end{proposition}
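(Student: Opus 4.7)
The plan is to verify that the two conditions coincide essentially by unpacking definitions, with no deep homological input beyond the interpretation of $\alpha_r$ and $\alpha_\ell$ that has already been set up. First I would observe that, by Definition \ref{asid number definition}, the condition $\alpha_r = 0$ is simply the statement that $\lambda_r : \Lambda \to \RHom_\Lambda(C,C)$ is itself an isomorphism in $\sfD(\Mod \Lambda)$. Granted this, condition Right asid 2 becomes vacuous: since the functor $\RHom_\Lambda(C^a, -)$ preserves isomorphisms, the cone $\cone(\RHom_\Lambda(C^a, \lambda_r))$ is zero for every $a \geq 0$, which certainly has finite injective dimension. Right asid 3 is then automatic, already realized at $a = 0$. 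Thus the combined content of ``right asid with $\alpha_r = 0$'' reduces to precisely two hypotheses: $\injdim_\Lambda C < \infty$ and $\lambda_r$ is an isomorphism in $\sfD(\Mod \Lambda)$. Dually, ``left asid with $\alpha_\ell = 0$'' reduces to $\injdim_{\Lambda^{\op}} {}_\Lambda C < \infty$ together with $\lambda_\ell$ being an isomorphism in $\sfD(\Mod \Lambda^{\op})$.

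Next I would match these four conditions against the cotilting bimodule conditions of \cite{Miyachi}. A $\Lambda$-$\Lambda$-bimodule $C$ is cotilting in that sense precisely when $C$ has finite injective dimension from both sides and the two canonical bimodule maps $\lambda_r$ and $\lambda_\ell$ are isomorphisms in the respective derived categories. Comparing with the list extracted in the previous step, the two sets of conditions coincide term by term, giving the equivalence in both directions: if $C$ is asid with $\alpha_r = \alpha_\ell = 0$, the four conditions above hold, hence $C$ is cotilting; conversely, if $C$ is cotilting, then Right asid 1 is the finite injective dimension hypothesis, $\lambda_r$ being an isomorphism forces $\alpha_r = 0$ (which then makes Right asid 2 and Right asid 3 vacuous), and the left-side argument is symmetric.

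The main subtlety I anticipate lies not in the homological algebra but in confirming that the derived-functorial statement ``$\lambda_r$ is an isomorphism'' truly matches Miyachi's formulation, which is classically phrased as the canonical map $\Lambda \to \End_\Lambda(C)$ being bijective together with $\Ext^i_\Lambda(C,C) = 0$ for $i > 0$. The equivalence of these two formulations is standard but should be noted. One also needs to check carefully that $\lambda_r$ is a $\Lambda$-$\Lambda$-bimodule map with respect to the two $\Lambda$-actions on $\RHom_\Lambda(C,C)$ (coming from source and target), so that the cotilting conclusion is genuinely about the bimodule structure of $C$. Once these translations are in place, the proposition is immediate.
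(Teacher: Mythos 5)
Your proposal is correct, and it captures the essential content of the proposition. A point worth flagging first: the paper cites this statement from \cite{anodai} without giving a proof, so there is no in-paper argument to compare against; what you have written is the natural argument that the citation is pointing to.

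Your reduction is sound. By Definition \ref{asid number definition}, $\alpha_r = 0$ means precisely that $\lambda_r = \RHom_\Lambda(C^0, \lambda_r)$ is an isomorphism in $\sfD(\Mod\Lambda)$, and then $\RHom_\Lambda(C^a,\lambda_r)$ is an isomorphism for every $a$ by functoriality, so conditions Right \asid\ 2 and 3 hold automatically. In the other direction, ``$C$ is right asid'' already contains Right \asid\ 1, so extracting the pair ($\injdim_\Lambda C < \infty$, $\lambda_r$ iso) as the equivalent content of ``right asid with $\alpha_r=0$'' is a genuine equivalence, not just an implication, and the dual reduction on the left works the same way. Matching with Miyachi then comes down to the standard translation: $\lambda_r$ is an isomorphism in $\sfD(\Mod\Lambda)$ if and only if $\Lambda \to \End_\Lambda(C_\Lambda)$ is bijective and $\Ext^i_\Lambda(C,C)=0$ for $i>0$, and dually for $\lambda_\ell$; together with the two finite-injective-dimension hypotheses these are Miyachi's axioms. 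The two side remarks you raise are exactly the right places to pause: one does need to check that $\lambda_r$ sends $x\in\Lambda$ to left multiplication (so that it is the canonical ``balance'' map with respect to the bimodule structure), and one should verify that the two ring maps $\Lambda \to \End_\Lambda(C_\Lambda)$ and $\Lambda \to \End_{\Lambda^{\op}}({}_\Lambda C)$ (left and right multiplication, respectively) are precisely the pair appearing in Miyachi's definition. Once those routine verifications are recorded, nothing further is needed; the proposition really is a definition-chase and your plan would succeed as written.
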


In \cite{anodai},   
we introduce a new class of finitely graded IG-algebra called \textit{homologically well-graded (hwg) IG-algebra}, 
and show that it posses nice characterizations from several view points. 
Among other things, we show that a trivial extension algebra $A= \Lambda \oplus C$ is hwg IG 
if and only if $C$ is an asid bimodule with $\alpha_{r} = 0, \alpha_{\ell} = 0$.

In \cite{higehaji}, we investigate a condition that $A= \Lambda \oplus C$ is IG from a categorical view point  
in the case where $\Lambda$ is IG. 
We close this section by showing that in this case the above condition become much simpler.

\begin{proposition}\label{reduction of asid conditions}
Let $\Lambda$  and $C$ be as in Theorem \ref{asid theorem 1}.
Assume moreover that $\Lambda$ is IG 
and $C$ satisfies the conditions right and left \asid \ 1.   
Then, $C$ also satisfies the conditions right and left \asid \ 2. 
\end{proposition}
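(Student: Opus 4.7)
The plan is to reduce both Right and Left \asid\ 2 to finiteness of $\pd {}_\Lambda C$ and $\pd C_\Lambda$ via iterated application of the Avramov--Foxby inequality (Theorem \ref{Avramov-Foxby:inequality}).

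The first step is to invoke the classical Iwanaga theorem on IG algebras: for any finitely generated module $M$ over an IG algebra $\Lambda$, one has $\pd M < \infty$ if and only if $\injdim M < \infty$, with both bounded by $\injdim \Lambda$. Applying this to $C_\Lambda$ (using Right \asid\ 1) yields $\pd C_\Lambda < \infty$, and applying it to ${}_\Lambda C$ (using Left \asid\ 1) yields $\pd {}_\Lambda C < \infty$.

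Next I focus on Right \asid\ 2. Since $\RHom_\Lambda(C, -)$ is a triangulated endofunctor of $\sfD(\Mod \Lambda)$, the cone $\cone(\RHom_\Lambda(C^a, \lambda_r))$ is canonically identified with $\RHom_\Lambda(C^a, \cone \lambda_r)$, i.e.\ with $a$-fold iterated application of $\RHom_\Lambda(C, -)$ to $\cone \lambda_r$. Iterating Theorem \ref{Avramov-Foxby:inequality} $a$ times gives
\[
\injdim_\Lambda \RHom_\Lambda(C^a, \cone \lambda_r) \;\leq\; a \cdot \pd_{\Lambda^{\op}} {}_\Lambda C \;+\; \injdim_\Lambda \cone \lambda_r,
\]
so the problem reduces to showing $\injdim_\Lambda \cone \lambda_r < \infty$. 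The exact triangle $\Lambda \to \RHom_\Lambda(C,C) \to \cone \lambda_r \to \Lambda[1]$ combined with Lemma \ref{homological dimension lemma}(1) reduces this further to two bounds. The first, $\injdim \Lambda_\Lambda < \infty$, is just the IG hypothesis. The second is a single application of Theorem \ref{Avramov-Foxby:inequality}:
\[
\injdim_\Lambda \RHom_\Lambda(C,C) \;\leq\; \pd_{\Lambda^{\op}} {}_\Lambda C \;+\; \injdim_\Lambda C_\Lambda \;<\; \infty,
\]
using the two finiteness facts from the first step.

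Left \asid\ 2 then follows by the entirely symmetric argument over $\Lambda^{\op}$. The only point to watch out for is the sidedness bookkeeping of the bimodule $C$: one needs \emph{both} $\pd C_\Lambda < \infty$ and $\pd {}_\Lambda C < \infty$ at the same time, which is exactly why the conclusion requires both Right and Left \asid\ 1 (together with the IG property of $\Lambda$) as hypotheses. Beyond this sidedness bookkeeping and the invocation of the Iwanaga theorem to trade finite injective dimension for finite projective dimension, there is no genuine obstacle — everything is a purely mechanical iteration of Avramov--Foxby.
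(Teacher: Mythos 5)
Your proof is correct and follows essentially the same route as the paper's: Iwanaga's theorem converts the \asid\ 1 hypotheses into finite projective dimension of $C$ on the appropriate side, and Avramov--Foxby (Theorem \ref{Avramov-Foxby:inequality}) together with Lemma \ref{homological dimension lemma}(1) then gives finite injective dimension of the cones. The only cosmetic difference is one of arrangement: the paper bounds $\injdim\RHom_\Lambda(C^a,\Lambda)$ and $\injdim\RHom_\Lambda(C^{a+1},C)$ separately and then takes the cone, whereas you commute the cone past the triangulated functor $\RHom_\Lambda(C^a,-)$ first and bound $\injdim\cone\lambda_r$ once; since $\RHom_\Lambda(C^a,-)$ is triangulated these are interchangeable, so this is the same proof.
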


We use the following fundamental observation due to  Iwanaga. 

\begin{proposition}[Iwanaga \cite{Iwanaga}]\label{Iwanaga}
Let $\Lambda$ be an   IG-algebra and $M$ a finitely generated  $\Lambda$-module. Then, 
\[
\pd_{\Lambda} M < \infty \Longleftrightarrow  \injdim_{\Lambda} M < \infty.
\]
\end{proposition}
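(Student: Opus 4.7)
Set $d := \injdim_{\Lambda} \Lambda_{\Lambda} = \injdim_{\Lambda^{\op}} {}_{\Lambda}\Lambda$, finite by the IG hypothesis and equal by Zaks' theorem (mentioned just above the statement). The plan is to prove both implications, but only the reverse one is substantive.

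For the forward direction, assume $\pd_{\Lambda} M = n < \infty$. Since $\Lambda$ is Noetherian and $M$ is finitely generated, $M$ admits a finite projective resolution $0 \to P_n \to \cdots \to P_0 \to M \to 0$ by finitely generated projectives. Each $P_i$ is a direct summand of some $\Lambda^{k_i}$, so $\injdim_{\Lambda} P_i \leq d$. Iterating Lemma \ref{homological dimension lemma}.(1) on the exact triangles of syzygies arising from this resolution (or equivalently, dimension-shifting in $\Ext^{j}(-, M)$ along the short exact sequences $0 \to \Omega^{k+1}M \to P_k \to \Omega^k M \to 0$), one obtains $\injdim_{\Lambda} M \leq d < \infty$.

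For the reverse direction, assume $\injdim_{\Lambda} M < \infty$. The plan is to invoke the Auslander--Buchweitz Cohen--Macaulay (Gorenstein-projective) approximation theory, which applies to every finitely generated module over an IG ring: there is a short exact sequence
\[
0 \to Y \to X \to M \to 0
\]
with $X \in \CM \Lambda$ (so $\Ext^i_{\Lambda}(X,\Lambda) = 0$ for all $i \geq 1$, and $X$ admits a complete projective resolution) and $\pd_{\Lambda} Y \leq d - 1$. By the forward direction, $\injdim_{\Lambda} Y \leq d$; together with $\injdim_{\Lambda} M < \infty$, Lemma \ref{homological dimension lemma}.(1) applied to the short exact sequence gives $\injdim_{\Lambda} X < \infty$. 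It then remains to establish the key step: \emph{a Cohen--Macaulay $\Lambda$-module of finite injective dimension is projective}. Granted this, $\pd_{\Lambda} X = 0$, whence $\pd_{\Lambda} M \leq \pd_{\Lambda} Y + 1 \leq d < \infty$.

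For the key step, we use that $\Lambda$ being IG means ${}_{\Lambda}\Lambda_{\Lambda}$ is a dualizing complex, so $\RHom_{\Lambda}(-, \Lambda)$ gives a duality between $\sfD^{\mrb}(\mod \Lambda)^{\op}$ and $\sfD^{\mrb}(\mod \Lambda^{\op})$. Under this duality, a Cohen--Macaulay $\Lambda$-module $X$ is sent to the Cohen--Macaulay $\Lambda^{\op}$-module $X^{\vee} := \Hom_{\Lambda}(X,\Lambda)$ (concentrated in degree zero, thanks to the Ext-vanishing), and finite injective dimension of $X$ over $\Lambda$ translates to finite projective dimension of $X^{\vee}$ over $\Lambda^{\op}$. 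A Cohen--Macaulay module of finite projective dimension is projective by an elementary induction: take a projective resolution of length $\pd X^{\vee}$, and use the Ext-vanishing $\Ext^{n}(X^{\vee}, Q) = 0$ (for projective $Q$) to split off the top term, iterating down to $\pd = 0$. Thus $X^{\vee}$ is projective over $\Lambda^{\op}$, and biduality $X = (X^{\vee})^{\vee}$ yields that $X$ is projective over $\Lambda$. The main obstacle is precisely this key step, which rests on the (classical but nontrivial) fact that IG-ness of $\Lambda$ is equivalent to $\Lambda$ being its own dualizing complex, together with the clean behavior of $\RHom_{\Lambda}(-,\Lambda)$ on Cohen--Macaulay modules.
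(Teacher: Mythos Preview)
The paper does not give its own proof of this proposition; it is merely quoted from Iwanaga \cite{Iwanaga} and used as a black box in the proof of Proposition~\ref{reduction of asid conditions}. So there is no ``paper's proof'' to compare against.

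Your argument is correct. The forward direction is the standard one. For the reverse direction, your route via Auslander--Buchweitz approximation and the duality $\RHom_{\Lambda}(-,\Lambda)$ works, and there is no circularity: the existence of CM approximations over an IG ring uses only that $\Omega^{d}M$ is Gorenstein-projective (from $\injdim\Lambda=d$) together with the complete-resolution structure on GP modules, not the statement you are proving. The one step you gloss over is the translation ``$\injdim_{\Lambda}X<\infty \Rightarrow \pd_{\Lambda^{\op}}X^{\vee}<\infty$''. This is true, but it deserves a line: for $L\in\mod\Lambda^{\op}$ one has $\Ext^{i}_{\Lambda^{\op}}(X^{\vee},L)\cong \Hom_{\sfD(\mod\Lambda)}(L^{\vee},X[i])$, and since $\injdim_{\Lambda^{\op}}{}_{\Lambda}\Lambda=d$ the complex $L^{\vee}=\RHom_{\Lambda^{\op}}(L,\Lambda)$ lives in cohomological degrees $[0,d]$; d\'evissage against the $t$-structure then bounds this $\Hom$ by $\Ext^{\geq i}_{\Lambda}(-,X)$, which vanishes for $i>\injdim_{\Lambda}X$. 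Once you have $\pd_{\Lambda^{\op}}X^{\vee}<\infty$, your splitting argument (GP $+$ finite pd $\Rightarrow$ projective) is clean, and biduality finishes it.

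By way of comparison, Iwanaga's original 1980 proof is more elementary and does not invoke CM approximations or dualizing-complex duality: it proceeds by showing directly that over a Noetherian ring with $\injdim\Lambda_{\Lambda}=d$, every module of finite injective dimension has flat dimension at most $d$, and then uses that flat and projective dimensions agree for finitely generated modules over Noetherian rings. Your approach is heavier in prerequisites but conceptually transparent, and it has the virtue of isolating exactly where the two-sided IG hypothesis enters (namely, in making $\Lambda$ a dualizing complex so that $\RHom_{\Lambda}(-,\Lambda)$ is a genuine duality on $\sfD^{\mrb}(\mod\Lambda)$).
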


\begin{proof}[Proof of Proposition \ref{reduction of asid conditions}]
We only prove that  $C$ satisfies the right \asid \ condition 2. 
The  left \asid \ condition 1 together with the left version of Proposition \ref{Iwanaga} 
implies that $\pd C < \infty$. 
Hence by Theorem \ref{Avramov-Foxby:inequality},   
the functor  $\RHom_{\Lambda}(C, -)$ preserves finiteness of injective dimension.   
Therefore $\RHom_{\Lambda}(C^{a}, \Lambda), \ \RHom_{\Lambda} (C^{a}, C)$ 
have finite injective dimension. 
Now the assertion follows from  Lemma \ref{homological dimension lemma}. 
\end{proof}

\section{Upper triangular matrix algebras}\label{Upper triangular matrix algebras} 
 
An important example of  a trivial extension algebra is an upper triangular matrix algebra. 
As applications of Theorem \ref{projective dimension formula} and Theorem \ref{injective dimension formula}, 
we obtain the projective dimension formula and the injective dimension formula for a module over 
an upper triangular matrix algebra. 

Let $A = \begin{pmatrix} \Lambda_{0} & C \\ 0 &\Lambda_{1} \end{pmatrix}$ 
be an upper triangular matrix algebra 
where $\Lambda_{0}, \Lambda_{1}$ are algebras 
and $C$ is a $\Lambda_{0}$-$\Lambda_{1}$-bimodule. 
It is well-known that an $A$-module $M$ can be identified with 
a triple $(M_{0}, M_{1}, \xi_{M})$ consisting of a $\Lambda_{0}$-module $M_{0}$, 
a $\Lambda_{1}$-module $M_{1}$ 
and a $\Lambda_{1}$-module homomorphism $\xi_{M}: M_{0} \otimes_{\Lambda_{0}} C \to M_{1}$. 
We note that $\xi_{M}$ induces a $\Lambda_{0}$-module homomorphism 
$\theta_{M}: M_{0} \to \Hom_{\Lambda_{1}}(C, M_{1})$ via $\otimes$-$\Hom$ adjunction. 
We set 
\[\begin{split}
&\Xi_{M}: M_{0} \lotimes_{\Lambda_{0}}C \xrightarrow{\mathsf{can}} M_{0} \otimes_{\Lambda_{0}} C \xrightarrow{ \ \xi_{M} \ } M_{1}, \\
&\Theta_{M}: M_{0} \xrightarrow{ \ \theta_{M} \ } \Hom_{\Lambda_{1}} (C, M_{1}) 
\xrightarrow{ \ \mathsf{can} \ } \RHom_{\Lambda_{1}}(C, M_{1}) 
\end{split}\]
where $\mathsf{can}$ in both rows denote appropriate canonical morphisms.

\begin{proposition}\label{homological dimension formulas for an upper triangular matrix algebra}
The following equations hold.
\[
\begin{split}
&\pd_{A} M = \sup\{ \  \pd_{\Lambda_{0}} M_{0} , \ 
\pd_{\Lambda_{1}}\cone\Xi_{M} \ \}\\
&\injdim_{A} M = 
\sup\{ \  \injdim_{\Lambda_{1}} M_{1}, \ 
\injdim_{\Lambda_{0}} \cone \Theta_{M}  +1 \  \} 
\end{split}
\]
\end{proposition}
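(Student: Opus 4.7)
The plan is to realize the upper triangular matrix algebra $A = \begin{pmatrix} \Lambda_{0} & C \\ 0 & \Lambda_{1} \end{pmatrix}$ as the trivial extension $A = \Lambda \oplus C$, where $\Lambda := \Lambda_{0} \times \Lambda_{1}$ and $C$ is regarded as a $\Lambda$-$\Lambda$-bimodule via the projections $\Lambda \twoheadrightarrow \Lambda_{0}$ (left action) and $\Lambda \twoheadrightarrow \Lambda_{1}$ (right action). Writing $e_{0}, e_{1} \in \Lambda$ for the orthogonal idempotents, every $A$-module $M$ acquires a canonical grading with $M_{0} = Me_{0}$ in degree $0$ and $M_{1} = Me_{1}$ in degree $1$, and the map $\xi_{M}$ from the triple description is exactly the action morphism in the sense of Section \ref{Projective dimension formula}. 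Since $M_{i} = 0$ for $i \neq 0, 1$, both Theorem \ref{projective dimension formula} and Theorem \ref{injective dimension formula} apply directly.

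The crucial observation is that $C \lotimes_{\Lambda} C = 0$, and hence $C^{a} = 0$ for all $a \geq 2$. Indeed, $e_{1} C = 0$ on the left while $Ce_{0} = 0$ on the right, so the two copies of $C$ are supported on opposite idempotents of $\Lambda = \Lambda_{0} \times \Lambda_{1}$; because projective resolutions over a product algebra split according to these idempotents, the derived tensor vanishes. By the same idempotent argument, for every $a \geq 1$ both sides of
\[
\Xi^{a}_{M} : M_{0} \lotimes_{\Lambda} C^{a+1} \to M_{1} \lotimes_{\Lambda} C^{a}
\]
are already zero, so $\cone \Xi^{a}_{M} = 0$; the dual computation with $\RHom$ gives $\cone \Theta^{a}_{M} = 0$ for $a \geq 1$.

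Substituting into Theorem \ref{projective dimension formula} collapses the supremum to $\pd_{A} M = \sup\{ \pd_{\Lambda} M_{0}, \pd_{\Lambda} \cone \Xi^{0}_{M} \}$. Using $M_{0} \lotimes_{\Lambda} C = M_{0} \lotimes_{\Lambda_{0}} C$ (so that $\Xi^{0}_{M} = \Xi_{M}$) and the obvious identifications $\pd_{\Lambda}(-) = \pd_{\Lambda_{i}}(-)$ on $\Lambda$-modules supported at $e_{i}$, one obtains the first formula. The injective formula is derived dually from Theorem \ref{injective dimension formula}: only the $a = 0$ term survives, carrying its shift $+1$, and the identifications $\RHom_{\Lambda}(C, M_{1}) = \RHom_{\Lambda_{1}}(C, M_{1})$ and $\injdim_{\Lambda}(-) = \injdim_{\Lambda_{i}}(-)$ on $e_{i}$-supported objects yield the stated expression. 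The one point requiring a little care is that all derived computations respect the splitting $\Mod \Lambda \simeq \Mod \Lambda_{0} \times \Mod \Lambda_{1}$ at the level of resolutions; but this is immediate because a projective (resp.\ injective) resolution of a module supported on a single idempotent can be taken with the same support. Given this, the whole proof is a direct specialization of the main theorems to the degenerate bimodule situation, so no serious obstacle remains.
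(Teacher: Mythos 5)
Your proof is correct and takes essentially the same route as the paper: realize $A$ as the trivial extension of $\Lambda = \Lambda_{0} \times \Lambda_{1}$ by $C$, note the canonical grading on $M$ with $M_{i} = Me_{i}$, observe that $C \lotimes_{\Lambda} C = 0$, $M_{1} \lotimes_{\Lambda} C = 0$, and $\RHom_{\Lambda}(C, M_{0}) = 0$ so that only the $a=0$ terms survive in Theorems \ref{projective dimension formula} and \ref{injective dimension formula}, and then translate $\pd_{\Lambda}$ and $\injdim_{\Lambda}$ on $e_{i}$-supported objects into the corresponding dimensions over $\Lambda_{i}$. The paper states the same three vanishings without elaboration; you merely spell out the idempotent bookkeeping a bit more.
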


We note that a related  result was obtained by Asadollahi-Salarian \cite[Theorem 3.1, 3.2]{Asadollahi-Salarian}.

\begin{proof}
We set $\Lambda = \Lambda_{0} \times \Lambda_{1}$ 
and $e_{0}:= (1_{\Lambda_{0}}, 0), e_{1} := ( 0, 1_{\Lambda_{1}} )$.
We equip $C$ with  a $\Lambda $-$\Lambda$-bimodule structure 
in the following way
\[
(a_{0},a_{1}) c (b_{0}, b_{1} ) := a_{0} cb_{1}. 
\]
Then the algebra $A$ is the trivial extension algebra $\Lambda\oplus C$. 

Let $M$ be an $A$-module. Then  $M_{0}$ and $M_{1}$ above are obtained as $M_{0} := Me_{0}, M_{1} := Me_{1}$ 
and $M= M_{0} \oplus M_{1}$ can be regarded as a graded $A$-module 
whose degree $0$-part is $M_{0}$ and degree $1$-part is $M_{1}$.  
Now the action map $\tilde{\xi}_{M,0}$ coincides with the above $\xi_{M}$ 
and the coaction map $\tilde{\theta}_{M. 0}$  coincides with the  above $\theta_{M}$. 

Since $C\lotimes_{\Lambda} C= 0, \ M_{1} \lotimes_{\Lambda} C = 0, \ \RHom_{\Lambda}(C, M_{0}) = 0$, 
we deduce the desired formula from Theorem \ref{projective dimension formula} and Theorem \ref{injective dimension formula}.
\end{proof}

Observe that  an $A$-module $M$ fits into a canonical  exact sequence below 
\[
0\to M_{1} \to M \to M_{0} \to 0. 
\]
Hence, $\pd_{A} M \leq \sup\{ \pd_{A} M_{0}, \pd_{A} M_{1} \}$ and 
$\injdim_{A} M \leq \sup\{ \injdim_{A} M_{0}, \injdim_{A} M_{1} \}$. 
Therefore as a corollary, we obtain an answer to Chase's problem 
of determining the global dimension of $A$.  

\begin{corollary}
The following equations hold.
\[
\begin{split}
\gldim A & = 
\sup\{ 
\  \pd_{\Lambda_{0}} M_{0}, \ 
\pd_{\Lambda_{1}} M_{0} \lotimes_{\Lambda_{0}} C + 1, \ 
\pd_{\Lambda_{1}}M_{1} 
 \mid M_{0} \in \Mod \Lambda_{0}, M_{1} \in \Mod \Lambda_{1} \ \}\\
&= 
\sup\{ \  \injdim_{\Lambda_{1}} M_{1}, \ 
\injdim_{\Lambda_{0}}\RHom_{\Lambda_{1}}(C, M_{1}) + 1,  \ 
\injdim_{\Lambda_{0}} M_{0} \ 
 \mid M_{0} \in \Mod \Lambda_{0}, M_{1} \in \Mod \Lambda_{1} \ \}
\end{split}
\]
\end{corollary}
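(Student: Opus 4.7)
The plan is to combine Proposition~\ref{homological dimension formulas for an upper triangular matrix algebra} with the canonical short exact sequence $0 \to M_1 \to M \to M_0 \to 0$ noted just before the statement, together with Proposition~\ref{global dimension lemma} which reduces $\gldim A$ to a supremum over ordinary $A$-modules.

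First I would specialize Proposition~\ref{homological dimension formulas for an upper triangular matrix algebra} to the two extremal cases: the $A$-module $M_0$ viewed via $A \twoheadrightarrow \Lambda_0$ (so the summand $M_1$ and the structure map $\xi$ vanish), and the $A$-module $M_1$ viewed via $A \twoheadrightarrow \Lambda_1$ (so $M_0$ and $\xi$ vanish). For the first, $\Xi_{M_0}$ is the zero map into $0$, hence $\cone \Xi_{M_0} \cong (M_0 \lotimes_{\Lambda_0} C)[1]$; similarly $\Theta_{M_0}$ goes into $0$, so $\cone \Theta_{M_0} \cong M_0[1]$. Applying the two formulas of Proposition~\ref{homological dimension formulas for an upper triangular matrix algebra} together with the shift rules $\pd(X[1]) = \pd X + 1$ and $\injdim(X[1]) = \injdim X - 1$ from Lemma~\ref{homological dimension lemma}, one obtains
\[
\pd_A M_0 = \sup\{\pd_{\Lambda_0} M_0,\ \pd_{\Lambda_1}(M_0 \lotimes_{\Lambda_0} C) + 1\}, \qquad \injdim_A M_0 = \injdim_{\Lambda_0} M_0.
\]
Symmetrically, for $M_1$ regarded as an $A$-module, $\cone \Xi_{M_1} \cong M_1$ and $\cone \Theta_{M_1} \cong \RHom_{\Lambda_1}(C, M_1)$, which gives
\[
\pd_A M_1 = \pd_{\Lambda_1} M_1, \qquad \injdim_A M_1 = \sup\{\injdim_{\Lambda_1} M_1,\ \injdim_{\Lambda_0} \RHom_{\Lambda_1}(C, M_1) + 1\}.
\]

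Next I would verify both inequalities defining the supremum. The bound ``$\geq$'' in each asserted equation is immediate: the three quantities on the right are the projective (respectively injective) dimensions, as $A$-modules, of $M_0$ and $M_1$ ranging over $\Mod \Lambda_0$ and $\Mod \Lambda_1$, and each is bounded above by $\gldim A$. For ``$\leq$'', the canonical short exact sequence $0 \to M_1 \to M \to M_0 \to 0$ combined with Lemma~\ref{homological dimension lemma}(1) yields
\[
\pd_A M \leq \sup\{\pd_A M_0, \pd_A M_1\}, \qquad \injdim_A M \leq \sup\{\injdim_A M_0, \injdim_A M_1\},
\]
for every $A$-module $M$. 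Taking the supremum over all $A$-modules $M$ and invoking Proposition~\ref{global dimension lemma} gives the upper bound on $\gldim A$.

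There is essentially no obstacle here; the work is entirely in Proposition~\ref{homological dimension formulas for an upper triangular matrix algebra}, and this corollary is packaging. The only minor care required is the bookkeeping of shifts, ensuring that the combination $\injdim \cone \Theta_{M_0} + 1$ collapses to $\injdim_{\Lambda_0} M_0$ (and dually that $\pd \cone \Xi_{M_1} = \pd_{\Lambda_1} M_1$), after which the two equations fall out from the two comparisons above.
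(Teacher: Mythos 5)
Your proof is correct and follows essentially the same route the paper intends: specialize Proposition~\ref{homological dimension formulas for an upper triangular matrix algebra} to the two extremal $A$-modules $M_0$ and $M_1$, track the shifts from the degenerate cones via Lemma~\ref{homological dimension lemma}, and combine with the inequality $\pd_A M \leq \sup\{\pd_A M_0, \pd_A M_1\}$ (and its injective twin) from the canonical short exact sequence, which is exactly the remark the paper makes just before the corollary. The only superfluous step is your appeal to Proposition~\ref{global dimension lemma}; for the ungraded algebra $A$ the definition of $\gldim A$ as $\sup_{M \in \Mod A} \pd_A M$ already suffices.
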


 We  deduce the following well-known result. 
\begin{corollary}
An upper triangular  matrix algebra $A = \begin{pmatrix} \Lambda_{0} & C \\ 0 & \Lambda_{1} \end{pmatrix}$ 
is of finite global dimension 
if and only if so are $\Lambda_{0}, \Lambda_{1}$. 
\end{corollary}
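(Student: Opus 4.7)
The plan is to read everything off the explicit global dimension formula proved in the preceding corollary, using the Avramov--Foxby inequality of Theorem \ref{Avramov-Foxby:inequality} to handle the mixed middle term. No new homological machinery is needed; both directions are essentially immediate once one controls $\pd_{\Lambda_1}(M_0 \lotimes_{\Lambda_0} C)$ in terms of $\gldim \Lambda_0$ and $\gldim \Lambda_1$.

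For the ``only if'' direction, assume $\gldim A < \infty$. Restricting the supremum in the projective dimension formula of the preceding corollary to the terms $\pd_{\Lambda_0} M_0$ (by taking $M_1 = 0$) and $\pd_{\Lambda_1} M_1$ (by taking $M_0 = 0$) gives $\gldim \Lambda_0 \leq \gldim A$ and $\gldim \Lambda_1 \leq \gldim A$, so both are finite.

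For the ``if'' direction, set $g_0 := \gldim \Lambda_0$ and $g_1 := \gldim \Lambda_1$ and assume $g_0, g_1 < \infty$. For any $M_0 \in \Mod \Lambda_0$ and $M_1 \in \Mod \Lambda_1$ we have $\pd_{\Lambda_0} M_0 \leq g_0$ and $\pd_{\Lambda_1} M_1 \leq g_1$. Since $C$ is in particular a right $\Lambda_1$-module, $\pd_{\Lambda_1} C \leq g_1 < \infty$, so by Theorem \ref{Avramov-Foxby:inequality} applied to the $\Lambda_0$-$\Lambda_1$-bimodule $C$,
\[
\pd_{\Lambda_1}(M_0 \lotimes_{\Lambda_0} C) \leq \pd_{\Lambda_0} M_0 + \pd_{\Lambda_1} C \leq g_0 + g_1.
\]
Plugging these three bounds into the global dimension formula of the preceding corollary yields $\gldim A \leq g_0 + g_1 + 1 < \infty$.

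There is no real obstacle here, since the content lies in the previous corollary; the only point to verify is that the Avramov--Foxby inequality applies in the bimodule situation with distinct algebras on the two sides, which is exactly the generality in which Theorem \ref{Avramov-Foxby:inequality} is stated.
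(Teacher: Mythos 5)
Your proof is correct and it carries out exactly the deduction the paper implicitly intends (the paper offers no proof, merely saying ``We deduce the following well-known result'' after the global dimension formula). One small caveat: Theorem \ref{Avramov-Foxby:inequality} as stated in the paper assumes a $\Lambda$-$\Lambda$-bimodule with the \emph{same} $\Lambda$ on both sides, not two distinct algebras as you assert; to apply it here one should first pass to $\Lambda := \Lambda_{0}\times\Lambda_{1}$ (as is done in the proof of Proposition \ref{homological dimension formulas for an upper triangular matrix algebra}) and note that $\pd_{\Lambda}M_{0}=\pd_{\Lambda_{0}}M_{0}$, $\pd_{\Lambda}C_{\Lambda}=\pd_{\Lambda_{1}}C$, and $M_{0}\lotimes_{\Lambda}C=M_{0}\lotimes_{\Lambda_{0}}C$ as $\Lambda_{1}$-modules, whence the inequality you need follows from the theorem as stated.
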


The next result was essentially proved by Enochs-Cortes-Izurdiaga-Torrecillas \cite[Theorem 3.1]{ECIT}.

\begin{corollary}
\begin{enumerate}[(1)]
\item 
Assume that $C_{\Lambda_{1}}$ has finite projective dimension. 
Then an $A$-module $M$ has finite projective dimension 
if and only if 
so do the $\Lambda_{0}$-module $M_{0}$ and the $\Lambda_{1}$-module $M_{1}$. 

\item 
Assume that ${}_{\Lambda_{0}}C$ has finite flat dimension. 
Then an $A$-module $M$ has finite injective dimension 
if and only if 
so do the $\Lambda_{0}$-module $M_{0}$ and the $\Lambda_{1}$-module $M_{1}$. 
\end{enumerate}
\end{corollary}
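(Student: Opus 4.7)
I would derive both (1) and (2) directly from Proposition \ref{homological dimension formulas for an upper triangular matrix algebra} by feeding its output into the defining exact triangles
\[
M_0 \lotimes_{\Lambda_0} C \xrightarrow{\Xi_M} M_1 \to \cone \Xi_M \to (M_0 \lotimes_{\Lambda_0} C)[1],
\qquad
M_0 \xrightarrow{\Theta_M} \RHom_{\Lambda_1}(C, M_1) \to \cone \Theta_M \to M_0[1],
\]
and applying the Avramov-Foxby inequalities of Theorem \ref{Avramov-Foxby:inequality} together with the basic cone estimate of Lemma \ref{homological dimension lemma}. The key observation is that in any exact triangle $L \to N \to P \to$, finiteness of homological dimension of any two of $L$, $N$, $P$ forces finiteness of the third (up to a $\pm 1$ shift, which is harmless).

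\textbf{Part (1).} By Proposition \ref{homological dimension formulas for an upper triangular matrix algebra}, $\pd_A M < \infty$ is equivalent to the simultaneous finiteness of $\pd_{\Lambda_0} M_0$ and $\pd_{\Lambda_1} \cone \Xi_M$. The hypothesis $\pd C_{\Lambda_1} < \infty$ combined with Theorem \ref{Avramov-Foxby:inequality} gives $\pd_{\Lambda_1}(M_0 \lotimes_{\Lambda_0} C) \leq \pd_{\Lambda_0} M_0 + \pd C_{\Lambda_1}$, so this intermediate quantity is finite as soon as $\pd_{\Lambda_0} M_0$ is. Feeding this into the first triangle and using Lemma \ref{homological dimension lemma} shows that, under the standing hypothesis, finiteness of $\pd_{\Lambda_1} \cone \Xi_M$ is equivalent to finiteness of $\pd_{\Lambda_1} M_1$. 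Putting the two equivalences together yields the desired characterization.

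\textbf{Part (2).} The argument is exactly dual. Proposition \ref{homological dimension formulas for an upper triangular matrix algebra} reduces finiteness of $\injdim_A M$ to the simultaneous finiteness of $\injdim_{\Lambda_1} M_1$ and $\injdim_{\Lambda_0} \cone \Theta_M$. The flat-dimension form of the Avramov-Foxby bound, $\injdim_{\Lambda_0} \RHom_{\Lambda_1}(C, M_1) \leq \operatorname{fd}({}_{\Lambda_0} C) + \injdim_{\Lambda_1} M_1$, is finite under our hypotheses, and then the triangle defining $\cone \Theta_M$ together with Lemma \ref{homological dimension lemma} exchanges finiteness of $\injdim_{\Lambda_0} \cone \Theta_M$ for finiteness of $\injdim_{\Lambda_0} M_0$, completing the equivalence.

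\textbf{Main obstacle.} The one point that requires a little care is that Theorem \ref{Avramov-Foxby:inequality} is stated in terms of projective dimension of the bimodule, whereas Part (2) hypothesizes only finite \emph{flat} dimension of ${}_{\Lambda_0}C$. The sharper inequality $\injdim \RHom_{\Lambda}(D, M) \leq \operatorname{fd}({}_{\Lambda} D) + \injdim M$ is part of the original Avramov-Foxby result and should be quoted explicitly; beyond that, the proof is a purely formal triangle-chase on the two defining triangles.
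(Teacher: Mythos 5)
Your proof is correct, and it is the argument the paper intends: the corollary is stated in the paper without proof (with a pointer to Enochs--Cortes-Izurdiaga--Torrecillas), immediately after Proposition \ref{homological dimension formulas for an upper triangular matrix algebra}, precisely because the derivation is the formal triangle chase you carried out. You also correctly identified the one point that deserves an explicit citation: for part (2) as stated, Theorem \ref{Avramov-Foxby:inequality} in the paper only gives the bound $\injdim \RHom_\Lambda(D,M) \le \pd{}_\Lambda D + \injdim M$, whereas the hypothesis is only finite \emph{flat} dimension of ${}_{\Lambda_0}C$; the needed inequality $\injdim_{\Lambda_0}\RHom_{\Lambda_1}(C,M_1)\le \operatorname{fd}({}_{\Lambda_0}C)+\injdim_{\Lambda_1}M_1$ is indeed in the original Avramov--Foxby paper (it is part of what the paper's reference to \cite[2.3.F.(5) and Theorem 4.1]{Avramov-Foxby} is pointing at). One small bookkeeping remark: Theorem \ref{Avramov-Foxby:inequality} is written for $\Lambda$-$\Lambda$-bimodules, while $C$ here is a $\Lambda_0$-$\Lambda_1$-bimodule; to invoke it directly, view $C$ as a bimodule over $\Lambda=\Lambda_0\times\Lambda_1$ as the proof of Proposition \ref{homological dimension formulas for an upper triangular matrix algebra} does, and observe that $\pd_\Lambda$ and $\injdim_\Lambda$ of a module supported on a single factor agree with $\pd_{\Lambda_i}$, $\injdim_{\Lambda_i}$. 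With that bookkeeping made explicit, your argument is complete.
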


We can also deduce the following result 
which was obtained by X-W. Chen \cite{Chen scs}. 
We remark that the self-injective dimension of an upper triangular algebra 
was studied by \cite{Sakano}.

\begin{corollary}
Let $\Lambda_{0}, \Lambda_{1}$ be IG-algebras 
and $C$ a $\Lambda_{0}$-$\Lambda_{1}$-bimodule 
which is finitely generated as both left $\Lambda_{0}$-module and right $\Lambda_{1}$-module. 
Then, 
the upper triangular  matrix algebra $A = \begin{pmatrix} \Lambda_{0} & C \\ 0 & \Lambda_{1} \end{pmatrix}$ 
is IG if and only if $\pd {}_{\Lambda_{0}} C < \infty$ and $\pd C_{\Lambda_{1}} < \infty$.  
\end{corollary}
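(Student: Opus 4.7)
The plan is to recognize $A$ as a trivial extension algebra over $\Lambda := \Lambda_{0} \times \Lambda_{1}$ with bimodule $C$, exactly as in the proof of Proposition~\ref{homological dimension formulas for an upper triangular matrix algebra}, and then invoke Theorem~\ref{asid theorem 1}. Since $\Lambda_{0}, \Lambda_{1}$ are IG, so is $\Lambda$ (in particular Noetherian), and the hypothesis on $C$ ensures it is finitely generated as a right and as a left $\Lambda$-module. The category of right $\Lambda$-modules decomposes as a product of right $\Lambda_{0}$- and right $\Lambda_{1}$-modules, and $C_{\Lambda}$ is supported on the $\Lambda_{1}$-factor (via $e_{1} = (0,1)$), while ${}_{\Lambda}C$ is supported on the $\Lambda_{0}$-factor (via $e_{0} = (1,0)$). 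Consequently $\pd C_{\Lambda} = \pd C_{\Lambda_{1}}$ and $\pd {}_{\Lambda}C = \pd {}_{\Lambda_{0}}C$, and similarly for injective dimension.

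The key observation is the vanishing $C \lotimes_{\Lambda} C = 0$ in $\sfD(\Mod \Lambda)$. Indeed, $C = C e_{1}$ as a right $\Lambda$-module while $C = e_{0}C$ as a left $\Lambda$-module, so for any right $\Lambda_{1}$-module $P$ (viewed as a right $\Lambda$-module via the projection $\Lambda \twoheadrightarrow \Lambda_{1}$),
\[
P \otimes_{\Lambda} C \;=\; P e_{1} \otimes_{\Lambda} C \;=\; P \otimes_{\Lambda} e_{1} C \;=\; 0.
\]
Applying this term-by-term to a projective resolution of $C_{\Lambda}$ (whose components are direct sums of copies of $\Lambda e_{1}$) gives $C \lotimes_{\Lambda} C = 0$, and inductively $C^{a} = 0$ for all $a \geq 2$.

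With these preliminaries in hand, Theorem~\ref{asid theorem 1} reduces the problem to checking that $C$ satisfies the right and left $\asid$ conditions $1,2,3$. The vanishing $C^{a} = 0$ for $a \geq 2$ makes the $\asid$ $3$ conditions automatic: the morphisms $\RHom_{\Lambda}(C^{a}, \lambda_{r})$ and $\RHom_{\Lambda^{\op}}(C^{a}, \lambda_{\ell})$ are $0 \to 0$ for $a \geq 2$. Since $\Lambda$ is IG, Proposition~\ref{reduction of asid conditions} deduces the $\asid$ $2$ conditions from the $\asid$ $1$ conditions. Finally, the $\asid$ $1$ conditions ask that $\injdim C_{\Lambda} = \injdim_{\Lambda_{1}}C < \infty$ and $\injdim {}_{\Lambda}C = \injdim_{\Lambda_{0}^{\op}}{}_{\Lambda_{0}}C < \infty$; because $\Lambda_{0}$ and $\Lambda_{1}$ are IG and $C$ is finitely generated on both sides, Iwanaga's Proposition~\ref{Iwanaga} translates these into $\pd C_{\Lambda_{1}} < \infty$ and $\pd {}_{\Lambda_{0}}C < \infty$, which are exactly the stated conditions.

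The only nontrivial step is the vanishing $C \lotimes_{\Lambda} C = 0$, together with the identification of one-sided projective dimensions of $C$ over $\Lambda_{i}$ with projective dimension over the product $\Lambda$; the remainder of the argument is a direct assembly of the earlier results.
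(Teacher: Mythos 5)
Your proof is correct and follows the route the paper sets up for this purpose: realize $A$ as the trivial extension of $\Lambda = \Lambda_0 \times \Lambda_1$ by $C$ (as in the proof of Proposition~\ref{homological dimension formulas for an upper triangular matrix algebra}), apply the IG criterion of Theorem~\ref{asid theorem 1}, dispose of \asid~3 using the idempotent structure of $C$, deduce \asid~2 from \asid~1 by Proposition~\ref{reduction of asid conditions}, and translate \asid~1 into projective dimension conditions via Iwanaga's Proposition~\ref{Iwanaga}. The paper gives no separate argument for this corollary, merely remarking that it "can also be deduced," and your proposal is precisely that deduction; the only small wrinkle is that for \asid~3 one should note that the iterated functor $\RHom_{\Lambda}(C, \RHom_{\Lambda}(C, -))$ vanishes (not just $C \lotimes_{\Lambda} C$), which follows from the same observation that ${}_{\Lambda}C = e_0 C$ while $C_{\Lambda} = C e_1$, so $\RHom_{\Lambda}(C, -)$ sends modules supported on $e_1$ to modules supported on $e_0$, and kills the latter.
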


Using Proposition \ref{homological dimension formulas for an upper triangular matrix algebra} repeatedly, 
we obtain the following result which will be used  in the subsequent paper \cite{higehaji} and \cite{anodai}.

\begin{corollary}
Let $A = \bigoplus_{i = 0}^{\ell} A_{i}$ be a finitely graded Noetherian algebra.  
Assume  that the degree $0$-part $A_{0}$ is IG 
and that the $A_{0}$-$A_{0}$-bimodule  $A_{i}$ has finite projective dimension on both sides for $ i = 1, \cdots , \ell$. 
Then the Beilinson algebra $\nabla A$ is IG  and the $\nabla A$-$\nabla A$-bimodule $\Delta A$ has finite projective dimension on both sides.
\end{corollary}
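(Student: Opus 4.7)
I prove the corollary by induction on $k$, showing simultaneously that the $k \times k$ Beilinson algebra $R_k$ is IG and that for every shift $m \geq 1$ the ``column module'' $W_{k,m} := (A_{m+k-1}, A_{m+k-2}, \ldots, A_m)^{T}$ (a column of length $k$ with componentwise multiplications $A_{m+k-1-i} \otimes_{A_0} A_1 \to A_{m+k-i}$ as transition data) has finite left projective dimension over $R_k$, together with the symmetric statement for rows. The corollary's first assertion is the case $k = \ell$.

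\textbf{Base and inductive step for IG-ness and for the column modules.} The base case $k = 1$ is trivial: $R_1 = A_0$ is IG by hypothesis and $W_{1,m} = A_m$ has finite $A_0$-projective dimension by hypothesis. For the step, use the upper-triangular decomposition
\[
R_k = \begin{pmatrix} R_{k-1} & V_{k-1} \\ 0 & A_0 \end{pmatrix}
\]
with $V_{k-1} = W_{k-1,1} = (A_{k-1}, A_{k-2}, \ldots, A_1)^{T}$. A left $R_k$-module structure on $W_{k,m}$ is given by declaring its $A_0$-component to be $A_m$, its $R_{k-1}$-component to be $W_{k-1,m+1}$, and its transition map $V_{k-1} \otimes_{A_0} A_m \to W_{k-1,m+1}$ to be componentwise multiplication. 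Applying the left-module analogue of Proposition~\ref{homological dimension formulas for an upper triangular matrix algebra},
\[
\pd_{R_k} W_{k,m} \leq \sup\bigl\{\pd_{A_0} A_m,\; \pd_{R_{k-1}} \cone\bigl(V_{k-1} \lotimes_{A_0} A_m \to W_{k-1,m+1}\bigr)\bigr\}.
\]
The $R_{k-1}$-pd of the cone is bounded by $\sup\{\pd_{R_{k-1}}(V_{k-1} \lotimes_{A_0} A_m), \pd_{R_{k-1}} W_{k-1,m+1}\} + 1$. The second term is finite by the inductive hypothesis, and the first is finite by Theorem~\ref{Avramov-Foxby:inequality} combined with $V_{k-1} = W_{k-1,1}$, again covered by the inductive hypothesis. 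IG-ness of $R_k$ then follows from Chen's criterion (the last corollary in Section~\ref{A criterion of Iwanaga-Gorensteinness}) applied to the displayed decomposition, using IG-ness of $R_{k-1}$ and $A_0$ together with finite pd of $V_{k-1}$ on the left (the $m=1$ instance) and symmetrically on the right.

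\textbf{Finite pd of $\Delta A$.} Decompose $\Delta A = \bigoplus_{j=0}^{\ell-1} M_j$ as left $\nabla A$-modules, where $M_j := \Delta A \cdot e_j$ is the $j$-th column of $\Delta A$, supported in rows $j, \ldots, \ell-1$ with entry $A_{\ell - i + j}$ at row $i$. Each $M_j$ admits a finite filtration by the submodules ``supported in rows $\leq k$'' for $k = j, \ldots, \ell - 1$; these are indeed submodules because the left $\nabla A$-action on a column can only lower, not raise, the support index. The successive quotients are the row-concentrated modules $T_m(A_{\ell-m+j})$, where $T_m(X)$ denotes the left $\nabla A$-module having $X$ in row $m$ and zero elsewhere, with action factoring through $e_m \nabla A e_m = A_0$. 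It therefore suffices to show $\pd_{\nabla A} T_m(X) < \infty$ whenever $X$ is an $A_0$-module of finite projective dimension. For $X = A_0$ this follows from the short exact sequence $0 \to V_m \to \nabla A \cdot e_m \to T_m(A_0) \to 0$ together with the identification $\pd_{\nabla A} V_m = \pd_{R_m} V_m$, the right-hand side being finite by the induction above. For general $X$, one has $T_m(X) \cong T_m(A_0) \lotimes_{A_0} X$ as left $\nabla A$-modules and Theorem~\ref{Avramov-Foxby:inequality} concludes. The right-module argument is entirely symmetric.

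\textbf{Main obstacle.} The key subtlety is the identification $\pd_{\nabla A} V_m = \pd_{R_m} V_m$. The corner $R_m = e_{<m}(\nabla A)e_{<m}$ (with $e_{<m} := e_0 + \cdots + e_{m-1}$) is neither a unital subring nor a quotient of $\nabla A$ in a ring-theoretic sense, so this is not a base-change in the usual sense. Instead, one has to argue directly that any left $\nabla A$-module $N$ supported on the idempotents $e_0, \ldots, e_{m-1}$ admits projective covers drawn only from the summands $\nabla A \cdot e_k$ with $k < m$, that each such summand coincides with the $R_m$-projective $R_m \cdot e_k$ as an abelian group with matching actions, and that iterating through higher syzygies identifies the two projective resolutions term by term; once this is set up the rest is routine bookkeeping.
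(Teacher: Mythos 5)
Your proof is correct and carries out exactly the strategy the paper indicates: induct on $k$ via the one-column-at-a-time upper-triangular decompositions of the Beilinson algebras $R_k$, using the left-module version of Proposition~\ref{homological dimension formulas for an upper triangular matrix algebra} to control the column modules $W_{k,m}$, the preceding Chen-type corollary to obtain IG-ness of each $R_k$, and a row filtration of the columns of $\Delta A$ to reduce its finite projective dimension to that of the $V_m$. One clarification on your ``main obstacle'': contrary to your claim, $R_m$ \emph{is} a ring quotient of $\nabla A$, since $e_{\geq m}\nabla Ae_{<m}=0$ forces $\nabla Ae_{\geq m}\nabla A=\nabla Ae_{\geq m}$, so that $\nabla A/\nabla Ae_{\geq m}\nabla A\cong R_m$; restriction of scalars along this quotient carries ${}_{R_m}R_m$ to $\nabla A/\nabla Ae_{\geq m}\cong\nabla Ae_{<m}$, a projective left $\nabla A$-module, hence it takes $R_m$-projectives to $\nabla A$-projectives and the one inequality your argument actually uses, $\pd_{\nabla A}V_m\le\pd_{R_m}V_m$, follows immediately.
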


{H.M. Department of Mathematics and Information Sciences,
Faculty of Science / Graduate School of Science,
Osaka Prefecture University}

{minamoto@mi.s.osakafu-u.ac.jp} 

$ $

{K.Y. Graduate Faculty of Interdisciplinary Research, Faculty of Engineering
University of Yamanashi}

{kyamaura@yamanashi.ac.jp}

\end{document}